\documentclass[12pt,letterpaper,final,twoside,leqno]{amsart}
\usepackage{eucal, mathrsfs}
\usepackage{amsmath,amssymb,amsthm,
amscd,amsopn}
\usepackage{hyperref}
\usepackage{times}
\usepackage{xspace}
\usepackage{epsfig,epic,eepic,latexsym,color}
\usepackage[all]{xy}
\usepackage{paralist}
\usepackage{stmaryrd}
\usepackage{enumitem}
\usepackage{color}
\usepackage{soul}
\usepackage{mathtools}

\catcode`~=11 \def\UrlSpecials{\do\~{\kern -.15em\lower .7ex\hbox{~}\kern .04em}} \catcode`~=13

\newcommand{\urlwofont}[1]{\urlstyle{same}\url{#1}}

\binoppenalty=10000
\relpenalty=10000
\newcommand{\widepagestyle}{
\voffset=0in
\hoffset=0in
\marginparwidth=0.7in
\oddsidemargin=0in
\evensidemargin=0in
\textwidth=6.5in
\textheight=8.5in
\topmargin=0in
\headheight=0in
\headsep=0.2in
\footskip=0.5in
}

\newcounter{are-there-sections}
\setcounter{are-there-sections}{1}

\makeatletter

\renewcommand\subsection{
  \renewcommand{\sfdefault}{pag}
  \@startsection{subsection}%
  {2}{0pt}{-\baselineskip}{.2\baselineskip}{\raggedright
    \sffamily\itshape\small\bfseries
  }}
\renewcommand\section{
  \renewcommand{\sfdefault}{phv}
  \@startsection{section} %
  {1}{0pt}{\baselineskip}{.2\baselineskip}{\centering
    \sffamily
    \scshape
    \bfseries
}}
\newcounter{lastyear}\setcounter{lastyear}{\the\year}
\addtocounter{lastyear}{-1}


\newcommand\noin{\noindent}

\newcommand\input /home/kovacs/tex/latex/{\input /home/kovacs/tex/latex/} 



\newtheoremstyle{bozont}{8pt}{10pt}%
     {\itshape}
     {}
     {\bfseries}
     {.}
     {.5em}
     {\thmname{#1}\thmnumber{ #2}\thmnote{ \rm #3}}
\newtheoremstyle{bozont-sf}{3pt}{3pt}%
     {\itshape}
     {}
     {\sffamily}
     {.}
     {.5em}
     {\thmname{#1}\thmnumber{ #2}\thmnote{ \rm #3}}
\newtheoremstyle{bozont-sc}{3pt}{3pt}%
     {\itshape}
     {}
     {\scshape}
     {.}
     {.5em}
     {\thmname{#1}\thmnumber{ #2}\thmnote{ \rm #3}}
\newtheoremstyle{bozont-remark}{8pt}{15pt}%
     {}
     {}
     {\scshape}
     {}
     {.5em}
     {\thmname{#1}\thmnumber{ #2.}\thmnote{\ \  ( #3)}}
\newtheoremstyle{bozont-def}{8pt}{15pt}%
     {}
     {}
     {\bfseries}
     {.}
     {.5em}
     {\thmname{#1}\thmnumber{ #2}\thmnote{ \rm #3}}
\newtheoremstyle{bozont-reverse}{3pt}{3pt}%
     {\itshape}
     {}
     {\bfseries}
     {.}
     {.5em}
     {\thmnumber{#2.}\thmname{ #1}\thmnote{ \rm #3}}
\newtheoremstyle{bozont-reverse-sc}{3pt}{3pt}%
     {\itshape}
     {}
     {\scshape}
     {.}
     {.5em}
     {\thmnumber{#2.}\thmname{ #1}\thmnote{ \rm #3}}
\newtheoremstyle{bozont-reverse-sf}{3pt}{3pt}%
     {\itshape}
     {}
     {\sffamily}
     {.}
     {.5em}
     {\thmnumber{#2.}\thmname{ #1}\thmnote{ \rm #3}}
\newtheoremstyle{bozont-remark-reverse}{3pt}{3pt}%
     {}
     {}
     {\sc}
     {.}
     {.5em}
     {\thmnumber{#2.}\thmname{ #1}\thmnote{ \rm #3}}
\newtheoremstyle{bozont-def-reverse}{3pt}{3pt}%
     {}
     {}
     {\bfseries}
     {.}
     {.5em}
     {\thmnumber{#2.}\thmname{ #1}\thmnote{ \rm #3}}
\newtheoremstyle{bozont-def-newnum-reverse}{3pt}{3pt}%
     {}
     {}
     {\bfseries}
     {}
     {.5em}
     {\thmnumber{#2.}\thmname{ #1}\thmnote{ \rm #3}}
\theoremstyle{bozont}    
\ifnum \value{are-there-sections}=0 {%
  \newtheorem{proclaim}{Theorem}
} 
\else {%
  \newtheorem{proclaim}{Theorem}[section]
} 
\fi
\newtheorem{thm}[proclaim]{Theorem}

\newtheorem{cor}[proclaim]{Corollary} 
 
\newtheorem{lem}[proclaim]{Lemma} 
\newtheorem{prop}[proclaim]{Proposition} 
\newtheorem{conj}[proclaim]{Conjecture}

\theoremstyle{bozont-sc}
\newtheorem{proclaim-special}[proclaim]{\specialthmname}

\theoremstyle{bozont-remark}

\newtheorem{rem}[proclaim]{Remark}


\newtheorem*{SubHeading*}{\SubHeadingName}%
\newtheorem{SubHeading}[proclaim]{\SubHeadingName}
\newtheorem{sSubHeading}[equation]{\sSubHeadingName}
\newenvironment{demo-r}[1]{\def\SubHeadingName{#1}\begin{SubHeading-r}}
  {\end{SubHeading-r}}%
\newenvironment{subdemo-r}[1]{\def\sSubHeadingName{#1}\begin{sSubHeading-r}}
  {\end{sSubHeading-r}} %
\newenvironment{demo*}[1]{\def\SubHeadingName{#1}\begin{SubHeading*}}
  {\end{SubHeading*}}%

\newtheorem{question}[proclaim]{Question}




\newtheorem{defn-thm}[proclaim]{Definition--Theorem}  

\theoremstyle{bozont-def}    
\newtheorem{defn}[proclaim]{Definition}
\newtheorem{notation}[proclaim]{Notation} 

\theoremstyle{bozont-reverse}    

\theoremstyle{bozont-reverse-sc}
\newtheorem{proclaimr-special}[proclaim]{\specialthmname}
{\def\specialthmname{#1}\begin{proclaimr-special}}%
{\end{proclaimr-special}}
\theoremstyle{bozont-remark-reverse}

\newtheorem{SubHeading-r}[proclaim]{\SubHeadingName}
\newtheorem{sSubHeading-r}[equation]{\sSubHeadingName}
\newtheorem{SubHeadingr}[proclaim]{\SubHeadingName}

\theoremstyle{bozont-def-newnum-reverse}    

\theoremstyle{bozont-def-reverse}

\newtheorem{newnumspecial}[proclaim]{\specialnewnumname}

\numberwithin{equation}{proclaim}

\numberwithin{figure}{section}


\newenvironment{enumerate-p}{
  \begin{enumerate}}
  {\setcounter{equation}{\value{enumi}}\end{enumerate}}
\newenvironment{enumerate-cont}{
  \begin{enumerate}
    {\setcounter{enumi}{\value{equation}}}}
  {\setcounter{equation}{\value{enumi}}
  \end{enumerate}}



\newlength{\swidth}
\setlength{\swidth}{\textwidth}
\addtolength{\swidth}{-,5\parindent}


\makeatother

\DeclareMathAlphabet{\smallchanc}{OT1}{pzc}%
                                 {m}{it}
\DeclareFontFamily{OT1}{pzc}{}
\DeclareFontShape{OT1}{pzc}{m}{it}%
             {<-> s * [1.100] pzcmi7t}{}
\DeclareMathAlphabet{\mathchanc}{OT1}{pzc}%
                                 {m}{it}




\DeclareFontFamily{OMS}{rsfs}{\skewchar\font'60}
\DeclareFontShape{OMS}{rsfs}{m}{n}{<-5>rsfs5 <5-7>rsfs7 <7->rsfs10 }{}
\DeclareSymbolFont{rsfs}{OMS}{rsfs}{m}{n}
\DeclareSymbolFontAlphabet{\scr}{rsfs}


\newcommand{\sA}{\scr{A}}
\newcommand{\sB}{\scr{B}}
\newcommand{\sC}{\scr{C}}
\newcommand{\sD}{\scr{D}}
\newcommand{\sE}{\scr{E}}
\newcommand{\sF}{\scr{F}}
\newcommand{\sG}{\scr{G}}

\newcommand{\sI}{\scr{I}}
\newcommand{\sJ}{\scr{J}}
\newcommand{\sK}{\scr{K}}
\newcommand{\sL}{\scr{L}}
\newcommand{\sM}{\scr{M}}
\newcommand{\sN}{\scr{N}}
\newcommand{\sO}{\scr{O}}



\newcommand{\bP}{\mathbb{P}}
\newcommand{\bQ}{\mathbb{Q}}
\newcommand{\bR}{\mathbb{R}}

\newcommand{\bZ}{\mathbb{Z}}

\newcommand{\fM}{\mathfrak{M}}











\DeclareMathOperator{\ind}{{ind}}

\DeclareMathOperator{\Aut}{Aut}

\DeclareMathOperator{\codim}{codim}

\DeclareMathOperator{\Ker}{{Ker}}

\DeclareMathOperator{\id}{{id}}

\DeclareMathOperator{\im}{{im}}

\DeclareMathOperator{\ord}{{ord}}

\DeclareMathOperator{\Pic}{Pic}
\DeclareMathOperator{\Weil}{Weil}

\DeclareMathOperator{\sing}{sing}

\DeclareMathOperator{\Spec}{{Spec}}

\DeclareMathOperator{\Supp}{{Supp}}


\newcommand{\factor}[2]{\left. \raise 2pt\hbox{\ensuremath{#1}} \right/
        \hskip -2pt\raise -2pt\hbox{\ensuremath{#2}}}

\def\coh#1.#2.#3.{H^{#1}(#2,#3)}
\def\dimcoh#1.#2.#3.{h^{#1}(#2,#3)}
\def\hypcoh#1.#2.#3.{\mathbb H_{\vphantom{l}}^{#1}(#2,#3)}
\def\loccoh#1.#2.#3.#4.{H^{#1}_{#2}(#3,#4)}
\def\dimloccoh#1.#2.#3.#4.{h^{#1}_{#2}(#3,#4)}
\def\lochypcoh#1.#2.#3.#4.{\mathbb H^{#1}_{#2}(#3,#4)}
\def\ses#1.#2.#3.{0  \longrightarrow  #1   \longrightarrow 
 #2 \longrightarrow #3 \longrightarrow 0} 
\def\sesshort#1.#2.#3.{0
 \rightarrow #1 \rightarrow #2 \rightarrow #3 \rightarrow 0}
\def\dist#1.#2.#3.{  #1   \longrightarrow 
 #2 \longrightarrow #3 \stackrel{+1}{\longrightarrow} } 
\def\CDdist#1.#2.#3.{  #1   @>>>  #2  @>>>   #3 @>+1>> }  
\def\shortses#1.#2.#3.{0  \rightarrow  #1   \rightarrow 
 #2  \rightarrow   #3 \rightarrow  0}
\def\shortdist#1.#2.#3.{  #1   \rightarrow 
 #2  \rightarrow   #3 \stackrel{+1}{\rightarrow} }  
\def\ddist#1.#2.#3.#4.#5.#6.{\CD
#1 @>>> #2 @>>> #3 @>+1>> \\
@VVV @VVV @VVV \\
#4 @>>> #5 @>>> #6 @>+1>> 
\endCD}
\def\ddistun#1.#2.#3.#4.#5.#6.{\CD
#1 @>>> #2 @>>> #3 @>+1>> \\
@. @VVV @VVV  \\
#4 @>>> #5 @>>> #6 @>+1>> 
\endCD}
\def\Iff#1#2#3{
\hfil\hbox{\hsize =#1
\vtop{\noin #2}
\hskip.5cm 
\lower.5\baselineskip\hbox{$\Leftrightarrow$}\hskip.5cm
\vtop{\noin #3}}\hfil\medskip}
\newcommand{\union}\cup
\newcommand{\intersect}\cap
\newcommand{\Union}\bigcup
\newcommand{\Intersect}\bigcap
\def\myoplus#1.#2.{\underset #1 \to {\overset #2 \to \oplus}}



\newcommand{\Quot}{\mathfrak{Quot}}

\widepagestyle

\setcounter{tocdepth}{1}

\address{Zsolt Patakfalvi, Princeton University, Department of Mathematics, Fine Hall, Washington Road,
Princeton, NJ-08544-1000, USA
}
\email{pzs@math.princeton.edu}
\urladdr{http://www.math.princeton.edu/\~pzs}

\theoremstyle{bozont}    
\newtheorem*{corollary_projectivity}{Corollary \ref{cor:projectivity_of_moduli}}
\newtheorem*{corollary_characteristic_zero}{Corollary \ref{cor:characteristic_zero}}
\newtheorem*{corollary_subadditivity}{Corollary \ref{cor:subadditivity_general_type_base}}

\title{Semi-positivity in positive characteristics}
\author{Zsolt Patakfalvi}

\begin{document}

\maketitle

\begin{abstract}
Let $f : (X, \Delta) \to Y$ be a flat, projective family of sharply $F$-pure, log-canonically polarized pairs over an algebraically closed field of characteristic $p >0$  such that $p \nmid \ind(K_{X/Y} + \Delta)$. We show that $K_{X/Y} + \Delta$ is nef and that $f_* (\sO_X(m (K_{X/Y} + \Delta)))$ is a nef vector bundle for $m \gg 0$ and divisible enough. Some of the results also extend to non log-canonically polarized pairs. The main motivation of the above results is projectivity of  proper subspaces of the moduli space of stable pairs in positive characteristics. Other applications are Kodaira vanishing free, algebraic proofs of corresponding positivity results in characteristic zero, and special cases of subadditivity of Kodaira-dimension in positive characteristics.
\end{abstract}

\tableofcontents

\section{Introduction}

Results stating positivity of the (log-)relative canonical bundle and of the pushforwards of its powers (e.g. \cite{Cartan_Plongements_projectifs_Seminaire_Henri_Cartan_Fonctions_automorphes}, \cite{Griffiths_Periods_of_integrals}, \cite{Fujita_On_Kahler_fiber_spaces}, \cite{Kawamata_Characterization_of_abelian_varieties}, \cite{Viehweg_Weak_positivity}, \cite{Kollar_Subadditivity_of_the_Kodaira_dimension}) played  an important role in the development of modern algebraic geometry. Applications are numerous: projectivity and quasi-projectivity of moduli spaces (e.g. \cite{Kollar_Projectivity_of_complete_moduli}, \cite{Viehweg_Quasi_projective_moduli}), subadditivity of Kodaira-dimension (e.g., \cite{Viehweg_Weak_positivity}, \cite{Kollar_Subadditivity_of_the_Kodaira_dimension}), Shafarevich type results about hyperbolicity of moduli spaces (e.g., \cite{Parshin_Algebraic_curves_over_function_fields}, \cite{Arakelov_Families_of_algebraic_curves}, \cite{Szpiro_Sur_le_theoreme_de_rigidite}), 
Kodaira dimension of moduli spaces (e.g., \cite{Mumford_Hirzebruch_s_proportionality_theorem}, \cite{Eisenbud_Harris_The_Kodaira_dimension_of_the_moduli_space_of_curves}), etc. Most of the proofs of the above mentioned general positivity results are either  analytic or  depend on Kodaira vanishing. Either way, they work only in characteristic zero. The word ``general'' and ``most'' has to be stressed here: there are positivity results available for families of curves (e.g., \cite{Szpiro_Sur_le_theoreme_de_rigidite}, \cite{Kollar_Projectivity_of_complete_moduli}) and K3 surfaces \cite{Maulik_Supersingular_K3_surfaces_for_large_primes} in positive characteristics. The aim of this article is 
to present positivity results available for arbitrary fiber dimensions in positive characteristics, bypassing the earlier used analytic or Kodaira vanishing type techniques. The strongest statements are in the case of (log)-canonically polarized fibers, but there are results for fibers with with nef log-canonical bundles as well. As in characteristic zero, one also has to put some restrictions on singularities. Here we assume the fibers to be sharply $F$-pure, which corresponds to characteristic zero notion of log-canonical singularities via reduction mod $p$ (see \cite{Schwede_Tucker_A_survey_of_test_ideals} for a survey on $F$-singularities, and Definition \ref{defn:non_F_pure} for the defintion of sharply $F$-pure singularities).  

Some differences between our results and the characteristic zero statements mentioned above have to be stressed. First, we only claim the semi-positivity of $f_* \omega_{X/Y}^m$ for $m$ big and divisible enough. This is a notable difference, since the characteristic zero results usually start with proving the $m=1$ case and then deduce the rest from that. However, in positive characteristics there are known counterexamples for the semi-positivity of $f_* \omega_{X/Y}$ \cite[3.2]{Moret_Bailly_Familles_de_courbes_et_de_varietes_abeliennes_sur_P_1_II_exemples}. So, any positivity result can hold only for $m >1$, and its proof has to bypass the $m=1$ case. Second, the characteristic zero results are birational in the sense that for example it is enough to assume that  $\omega_F$ is big for a general fiber of $F$. In our results nefness of $\omega_F$ is essential, and for the semi-positivity of pushforwards we even need $\omega_F$ to be ample. Hence, our results give exactly what one needs for projectivity of 
moduli 
spaces (as in \cite{Kollar_Projectivity_of_complete_moduli}), 
but yield subadditivity of Kodaira dimimension  only together with log-Minimal Model Program in positive characteristics.


\subsection{Results: normal, boundary free versions over a curve base}
\label{sec:results_initial_form}

Here we state our results in a special, but less technical form. We assume that the spaces involved are normal and we do not add  boundary divisors to our varieties. The base is also assumed to be a smooth projective curve. For the general form of the results see Section \ref{sec:results_full_generality}. 

We work over an algebraically closed field $k$ of characteristic $p>0$. 

\begin{thm}
\label{thm:results_initial_form}
Let $f : X \to Y$ be surjective, projective morphism from a normal variety  to a smooth projective curve with normal, sharply $F$-pure generic fiber. 
Further assume that $rK_X$ is cartier for some $(p, r)=1$.
\begin{enumerate}
\item \label{itm:results_initial_form:nef} If $K_{X/Y}$ is $f$-nef and $K_{X_y}$ is semi-ample for generic $y \in Y$, then $K_{X/Y}$ is nef.
\item \label{itm:results_initial_form:push_forward} If $K_{X/Y}$ is $f$-ample, then $f_* \sO_X(mrK_{X/Y})$ is a nef vector bundle for $m \gg 0$.
\item (A subadditivity of Kodaira dimension type corollary:) If $K_{X/Y}$ is $f$-nef, $K_{X_y}$ is big for generic $y \in Y$ and $g(Y) \geq 2$, then $K_X$ is big as well.
\end{enumerate}
\end{thm}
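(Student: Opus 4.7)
The plan is to write $K_X = K_{X/Y} + f^*K_Y$ and exploit that, since $g(Y)\ge 2$, $K_Y$ is ample of degree $2g-2 \ge 2$. Bigness of $K_X$ would then follow by establishing (i) $K_X$ is nef and (ii) $K_X^{n+1}>0$, because a nef divisor with positive top self-intersection on a projective variety is big.

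For (i), I would first upgrade the $f$-nefness of $K_{X/Y}$ to global nefness via part (\ref{itm:results_initial_form:nef}) of the theorem, and then add $f^*K_Y$, which is nef. The delicate point is that (\ref{itm:results_initial_form:nef}) requires $K_{X_y}$ to be semi-ample on the generic fiber, whereas we are only given bigness (together with nefness on every fiber, inherited from $f$-nefness). Bridging this is the main obstacle: in characteristic zero the base-point free theorem turns big and nef into semi-ample, but in positive characteristic this is not automatic, even for sharply $F$-pure pairs. A resolution would require either a slight strengthening of (\ref{itm:results_initial_form:nef}) tailored to big-and-nef generic fibers in the $F$-pure setting, or a reduction to a birational model where $K_{X/Y}$ is even $f$-ample on the generic fiber (e.g.\ a relative canonical model, when available).

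Granting (i), I would next compute $K_X^{n+1}$ where $n = \dim X_y$. Since $Y$ is a curve, $(f^*K_Y)^2=0$, so
\[
K_X^{n+1} \;=\; K_{X/Y}^{n+1} \;+\; (n+1)\,K_{X/Y}^{\,n}\cdot f^*K_Y.
\]
The first term is nonnegative by nefness of $K_{X/Y}$. For the second, $f^*K_Y \equiv (2g-2)[X_y]$ and $K_{X_y} = K_{X/Y}|_{X_y}$, hence $K_{X/Y}^{\,n}\cdot f^*K_Y = (2g-2)\,K_{X_y}^{\,n}$. Because $K_{X_y}$ is big and nef, $K_{X_y}^{\,n}>0$, and therefore $K_X^{n+1} \ge (n+1)(2g-2)K_{X_y}^{\,n} > 0$, yielding bigness.

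An alternative route, closer to the spirit of part (\ref{itm:results_initial_form:push_forward}), is to bound $h^0(X, mrK_X)$ from below via $h^0\bigl(Y,\, f_*\sO_X(mrK_{X/Y})\otimes \sO_Y(mrK_Y)\bigr)$. If one shows the pushforward is nef with rank growing like $(mr)^n$ (the rank growth being immediate from bigness of $K_{X_y}$), then tensoring with the ample line bundle $\sO_Y(mrK_Y)$ of degree $mr(2g-2)$ and applying Riemann-Roch on $Y$ produces at least $c \cdot m^{n+1}$ sections for some $c>0$. This version shifts the central difficulty to extending (\ref{itm:results_initial_form:push_forward}) from the $f$-ample to the $f$-nef, generically big setting, which is again the essential obstacle.
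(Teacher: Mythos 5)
Your treatment of part (3), granting nefness of $K_{X/Y}$, is essentially the paper's own argument: the proof of Corollary \ref{cor:subadditivity_general_type_base} reduces bigness to positivity of the top self-intersection of a nef divisor of the form $f^*A + m(K_{X/Y}+\Delta)$ and evaluates that intersection against a fiber, exactly as your computation $K_X^{n+1} = K_{X/Y}^{n+1} + (n+1)\,K_{X/Y}^{n}\cdot f^*K_Y \ge (n+1)(2g-2)K_{X_y}^{n} > 0$ does (the paper works with $mK_Y = A+E$, $A$ very ample, instead of using ampleness of $K_Y$ directly, but this is immaterial). So that half is correct and not a new route.

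The genuine gap is the step you flag and then leave open: producing the nefness of $K_{X/Y}$ in the first place. Part (\ref{itm:results_initial_form:nef}) requires $K_{X_y}$ to be semi-ample on the generic fiber, and in characteristic $p$ there is no base-point-free theorem turning ``big and nef'' into semi-ample, even for sharply $F$-pure fibers; neither of your suggested fixes (a strengthened version of (\ref{itm:results_initial_form:nef}) for big-and-nef fibers, or passage to a relative canonical model) is available, the latter requiring exactly the positive-characteristic MMP. The paper does not close this gap either: its general subadditivity statement, Corollary \ref{cor:subadditivity_general_type_base}, carries the hypothesis that $K_{X/Y}+\Delta$ is $f$-semi-ample precisely so that Theorem \ref{thm:relative_canonical_nef_intro} yields nefness, and the introduction explicitly says that subadditivity in the expected generality would need the log-MMP. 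So as written your argument establishes part (3) only under the additional assumption that $K_{X_y}$ is semi-ample for generic $y$ (e.g.\ when $K_{X/Y}$ is $f$-semi-ample), which is the version the paper actually proves; your alternative route through part (\ref{itm:results_initial_form:push_forward}) and Riemann--Roch on $Y$ runs into the same unproved extension and so does not repair this. Finally, note that parts (\ref{itm:results_initial_form:nef}) and (\ref{itm:results_initial_form:push_forward}) are themselves part of the statement and are the substantive content of the theorem; the paper proves them as special cases of Theorems \ref{thm:relative_canonical_nef_intro} and \ref{thm:pushforward_nef_intro} via the $S^0$/trace-map lifting results and the fiber-product trick of Section \ref{sec:semi_positivity}, whereas your proposal uses them as black boxes, so even apart from the nefness issue it is a proof proposal only for the corollary-type part (3).
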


\begin{rem}
The divisibility assumption on the index in Theorem \ref{thm:results_initial_form} can be removed on the expense of replacing sharply $F$-pure by strongly $F$-regular (the positive characteristic equivalent of Kawmata log terminal singularities), as stated in Theorems \ref{thm:index_p_divisible_nef} and \ref{thm:index_p_divisible_pushforward}.
\end{rem}

Point \eqref{itm:results_initial_form:push_forward} of Theorem \ref{thm:results_initial_form} is the sharply $F$-pure version of the characteristic zero statement used to show projectivity of the moduli space of stable varieties \cite{Kollar_Projectivity_of_complete_moduli}, \cite{Fujino_Semi_positivity_theorems_for_moduli_problems}. Therefore, it  implies 
projectivity of coarse moduli spaces of certain sharply $F$-pure moduli functors. For the precise statement we refer the reader to Section \ref{sec:results_full_generality}.

Furthermore, Theorem \ref{thm:results_initial_form} combined with lifting arguments gives a new algebraic proof of the following characteristic zero semi-positivity statement. 

\begin{cor}
Let $f : X \to Y$ be surjective, projective morphism  from a  Kawamata log terminal variety to a smooth projective curve over an algebraically closed field of characteristic zero. Let $r$ be the index of $K_X$.
\begin{enumerate}
\item If $K_{X/Y}$ is $f$-semi-ample, then $K_{X/Y}$ is nef.
\item If $K_{X/Y}$ is $f$-ample, then $f_* \sO_X(mr K_{X/Y})$ is a nef vector bundle for $m \gg 0$.
\end{enumerate}
\end{cor}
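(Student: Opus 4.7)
The plan is to deduce this corollary from Theorem \ref{thm:results_initial_form} by reduction modulo $p$. Starting from the characteristic zero data $(f : X \to Y, K_{X/Y}, r)$, I spread everything out over a finitely generated $\bZ$-subalgebra $R \subset k$, obtaining a projective flat family $f_R : \cX_R \to \cY_R$ over $\Spec R$ whose geometric generic fiber is $(f : X \to Y)$. After shrinking $\Spec R$ I may assume that $\cY_R \to \Spec R$ is a smooth family of projective curves, that $f_R$ is flat with normal total space, that $r K_{\cX_R/\cY_R}$ is Cartier, and that the $f_R$-semi-ampleness (respectively $f_R$-ampleness) of $K_{\cX_R/\cY_R}$ specializes to every closed fiber.

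Next I verify the hypotheses of Theorem \ref{thm:results_initial_form} for the geometric fiber over a general closed point $s \in \Spec R$ of residue characteristic $p$. Normality of $\cX_s$ is standard for $p \gg 0$. Choosing $p \nmid r$ gives $(p,r) = 1$. The crucial input is the $F$-singularity condition on the generic fiber of $f_s$: since $X$ is KLT, its generic fiber over $Y$ is KLT (the total space being KLT over a smooth curve base restricts KLT to the generic fiber), so by the Hara--Watanabe theorem its reduction mod $p$ is strongly $F$-regular, in particular sharply $F$-pure, for $p \gg 0$. Thus for all but finitely many closed points $s$ the fiber $f_s : \cX_s \to \cY_s$ satisfies all assumptions of Theorem \ref{thm:results_initial_form}.

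Applying Theorem \ref{thm:results_initial_form}(1) yields that $K_{\cX_s/\cY_s}$ is nef for almost all closed points $s$, and applying Theorem \ref{thm:results_initial_form}(2) yields that $(f_s)_* \sO_{\cX_s}(m r K_{\cX_s/\cY_s})$ is a nef vector bundle for $m \gg 0$. To descend both conclusions to characteristic zero I use that failure of nefness is detected by a single curve: if $K_{X/Y}$ were not nef, a curve $C \subset X$ with $K_{X/Y} \cdot C < 0$ would spread out to curves $C_s \subset \cX_s$ with the same negative intersection number, contradicting the positive characteristic result; similarly, a destabilizing quotient line bundle of $f_* \sO_X(mrK_{X/Y})$ of negative degree would spread out to a quotient of $(f_s)_* \sO_{\cX_s}(mr K_{\cX_s/\cY_s})$ of negative degree, again a contradiction. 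Here I use cohomology and base change, together with $f$-ampleness, to guarantee that $f_* \sO_X(mrK_{X/Y})$ is locally free and commutes with specialization for $m$ sufficiently large (uniformly over $\Spec R$).

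The main obstacle I expect is the verification of the $F$-singularity hypothesis on the generic fiber of the family in positive characteristic. One must ensure that the generic fiber of $f_R$, viewed as a scheme over the function field of $\cY_R$, carries a KLT structure that is compatible with spreading out so that the Hara--Watanabe correspondence applies uniformly; a secondary subtlety is that the integer $m$ for which the pushforward becomes nef in Theorem \ref{thm:results_initial_form}(2) must be chosen once, independently of $s$, which requires some uniformity in the vanishing of higher direct images and in the ampleness bound across the family.
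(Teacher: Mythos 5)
Your overall architecture (spread out over a finitely generated $\bZ$-algebra, apply the positive characteristic theorem at closed points of the arithmetic base, descend nefness by spreading out a hypothetical negative curve) is the same as the paper's, but the step you yourself flag as ``the main obstacle'' is a genuine gap, and it sits exactly at the crux. To apply Theorem \ref{thm:results_initial_form} to $f_s : \cX_s \to \cY_s$ you must show that the \emph{generic fiber of the reduced family}, i.e.\ the fiber of $\cX_R \to \cY_R$ over the generic point of the characteristic $p$ curve $\cY_s$, is normal and sharply $F$-pure. This is a scheme over the function field $k(\cY_s)$, an infinite, imperfect (though $F$-finite) field, and it is \emph{not} a ``reduction mod $p$'' of the characteristic zero generic fiber $X_\eta$ in the sense required by the Hara--Watanabe/Takagi correspondence: that correspondence is a statement about fibers over \emph{closed} points (finite residue fields) of a model over a finitely generated $\bZ$-algebra, and the generic point of $\cY_s$ is not such a point. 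So the sentence ``its reduction mod $p$ is strongly $F$-regular, hence the generic fiber of $f_s$ is sharply $F$-pure'' conflates ``reduction of the generic fiber'' with ``generic fiber of the reduction'' and is not justified as written; since you leave it as an expected obstacle rather than resolving it, the proof is incomplete at its key point.

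The gap is repairable, in two ways. (i) Apply the reduction theorem to the \emph{total space}: $X$ KLT implies $\cX_{\bar s}$ is strongly $F$-regular for the relevant $s$, and the generic fiber of $f_{\bar s}$ is a localization of $\cX_{\bar s}$; strong $F$-regularity is preserved under localization, so the generic fiber is strongly $F$-regular, hence normal and sharply $F$-pure. (ii) Do what the paper does in Corollary \ref{cor:characteristic_zero}: avoid the generic fiber altogether and use the closed-fiber statements Theorem \ref{thm:relative_canonical_nef} and Theorem \ref{thm:pushforward_nef}, which only require sharp $F$-purity of a single closed fiber $X_0 = X_{(y_0,\bar s)}$; that fiber is an honest reduction of the KLT fiber $X_{y_0}$ (for general $y_0$, KLT by inversion of adjunction), so Takagi's theorem \cite{Takagi_A_characteristic_p_analogue_of_plt_sintularities} applies directly, and even a dense (not necessarily open) set of primes suffices. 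Two smaller remarks: your worry about choosing $m$ uniformly in $s$ is unnecessary --- the descent only needs \emph{one} good closed point $s_0$ (the paper uses that nefness at a point implies nefness at its generizations), so you may take $m \geq \max(M_1, M(s_0))$ with $M_1$ a uniform vanishing bound on the model and $M(s_0)$ the bound for that single fiber; and the witness for non-nefness of the pushforward should be taken as a curve in $\bP\bigl(f_* \sO_X(mrK_{X/Y})\bigr)$ (equivalently a negative quotient after a finite base change of $Y$), not merely a quotient line bundle on $Y$ itself, before spreading it out.
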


\subsection{Results: full generality}
\label{sec:results_full_generality}

In algebraic geometry, one is frequently forced to work with pairs or even with non-normal pairs for various  reasons: induction on dimension, compactification, working with non-proper varieties, etc.  Hence, in the present article we put our results in the following, slightly more general framework than that of Section \ref{sec:results_initial_form}. 

%
%

\begin{notation}
\label{notation:results}
Let $f : X \to Y$ be a flat, relatively $S_2$ and $G_1$, equidimensional, projective morphism to a projective scheme over $k$  and $\Delta$ a $\bQ$-Weil divisor on $X$, such that 
\begin{enumerate}
\item  $\Supp \Delta$  contains neither codimension 0 points nor singular codimension 1 points of the fibers,
\item there is a $p \nmid r>0$, such that $r \Delta$ is an integer divisor, Cartier in relative codimension 1 and $\omega_{X/Y}^{[r]}(r \Delta)$ is a line bundle (note that $\omega_{X/Y}^{[r]}(r \Delta)$ is defined as  $ \iota_* (\omega_{U/Y}^r(r \Delta|_U))$  where $\iota : U \to X$ is the intersection of the relative Gorenstein locus and the locus where $r \Delta$ is Cartier) and
\item for all but finitely many $y \in Y$, $(X_y, \Delta_y)$ is sharply $F$-pure (see Definition \ref{defn:non_F_pure}).
\end{enumerate}
\end{notation}
The main results of the paper are as follows.
\begin{thm}
\label{thm:relative_canonical_nef_intro}
In the situation of Notation \ref{notation:results}, if $\omega_{X/Y}^{[r]}(r \Delta)$ is $f$-nef and for all but finitely many $y \in Y$, $K_{X_y} + \Delta_y$ is semi-ample, then $\omega_{X/Y}^{[r]}(r \Delta)$ is nef. 
\end{thm}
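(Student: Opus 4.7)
The plan is to argue by contradiction. Suppose $L:=\omega_{X/Y}^{[r]}(r\Delta)$ is not nef, so there is an irreducible curve $C\subset X$ with $L\cdot C<0$. Because $L$ is $f$-nef, $C$ cannot lie in a single fiber, so $f(C)\subset Y$ is positive-dimensional. By cutting $Y$ with general members of a very ample linear system through $f(C)$ and base-changing the family, I first reduce to the case where $Y$ is a smooth projective curve. Flatness, equidimensionality, the $S_2$/$G_1$ property, and the index hypotheses of Notation~\ref{notation:results} are preserved by such a general cut, and the generic fiber of the restricted family is still sharply $F$-pure since at most finitely many bad fibers may appear.

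With $Y$ a smooth projective curve, the strategy is to produce, for some $m>0$ with $r\mid m$, a nonzero global section $s\in H^0\bigl(X,\sO_X(m(K_{X/Y}+\Delta))\bigr)$ whose zero locus meets $C$ properly; such a section forces $L\cdot C\geq 0$, a contradiction. The two ingredients for constructing $s$ are the following. First, fiberwise semi-ampleness of $K_{X_y}+\Delta_y$ makes $\sO_{X_y}(m(K_{X_y}+\Delta_y))$ globally generated on good fibers for $m$ sufficiently divisible. Second, the sharp $F$-pure hypothesis on good fibers produces splittings
\[
\sO_{X_y}\hookrightarrow F^{n}_{*}\sO_{X_y}\bigl((1-p^n)(K_{X_y}+\Delta_y)\bigr),
\]
which, after twisting by $\sO_{X_y}(p^n m(K_{X_y}+\Delta_y))$, yield surjections onto $\sO_{X_y}(m(K_{X_y}+\Delta_y))$ from the appropriate Frobenius pushforward. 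The assumptions $p\nmid r$ and that $r(K_{X/Y}+\Delta)$ is relatively Cartier in codimension one are tailored to let one package these fiberwise splittings into a global statement via the relative Frobenius of $f$; combined with fiberwise base-point-freeness and cohomology-and-base-change, this yields a global section of $\sO_X(m(K_{X/Y}+\Delta))$ whose restriction to a chosen good fiber through a general point of $C$ is nonzero, hence whose zero locus cannot contain $C$.

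The main obstacle I anticipate is precisely this family-level globalization of the sharply $F$-pure splittings. Pointwise on fibers one has the splitting, but extracting an honest global section of $\sO_X(m(K_{X/Y}+\Delta))$ requires gluing the splittings coherently across $Y$; the $S_2$, $G_1$, equidimensionality, and index conditions of Notation~\ref{notation:results} are designed for exactly this purpose, while the finitely many non-sharply-$F$-pure fibers are harmless because $C$ dominates $Y$ and one may work at a general point of $f(C)$ lying in the good locus. A secondary subtlety typical of positive-characteristic arguments is that the Frobenius procedure naturally produces sections of a $p^n$-twisted line bundle; to land in $\sO_X(m(K_{X/Y}+\Delta))$ with $r\mid m$ one must carefully track the indices and exploit $\gcd(p,r)=1$ so that the surjection above has the right target, which is where the coprimality hypothesis is essential.
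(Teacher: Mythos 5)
Your reduction to a curve base is harmless (the paper does it more directly, base-changing along the normalization of $C$ and using Lemma \ref{lem:pullback_results}), but the core of your plan has a genuine gap: you propose to contradict $L\cdot C<0$ by producing a nonzero section $s\in H^0\bigl(X,\sO_X(m(K_{X/Y}+\Delta))\bigr)$ whose zero locus meets $C$ properly. Such a section need not exist, even when the theorem is true. Nefness of $K_{X/Y}+\Delta$ is strictly weaker than effectivity of any multiple: the hypotheses allow, for instance, a family with $K_{X/Y}+\Delta$ relatively trivial, equal to the pullback of a degree-zero non-torsion line bundle on an elliptic base curve, in which case $H^0\bigl(X,\sO_X(m(K_{X/Y}+\Delta))\bigr)=0$ for all $m>0$ although the conclusion of the theorem holds. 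So no argument that proves nefness by exhibiting sections of multiples of $K_{X/Y}+\Delta$ itself can succeed; the conclusion has to be reached as a limit of positivity statements about auxiliary divisors, which is exactly how the paper proceeds.

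The mechanism you invoke also does not deliver what you need. The lifting result (Proposition \ref{prop:surjectivity}) lifts elements of $S^0$ on a fiber only for a Cartier divisor $L$ with $L-K_X-\Delta$ ample on the total space; for $L=m(K_{X/Y}+\Delta)$ this difference involves $-f^*K_Y$ and is not ample under your hypotheses, so the fiberwise Frobenius splittings cannot be ``globalized'' this way. Moreover, even when lifting applies, the lifted sections naturally live in $\sO_X(N)+f^*K_Y+2X_0$, i.e.\ in a twist by a positive line bundle from the base (Proposition \ref{prop:generically_globally_generated}); removing that twist is the heart of the matter and is done in the paper by the fiber-product trick of Notation \ref{notation:product} together with Lemma \ref{lem:generic_global_generation_nef}, yielding nefness of $f_*\sN$ (Proposition \ref{prop:semi_positive}) and then of $\sN$ (Proposition \ref{prop:nef}) --- never a section of $\sN$ over $X$. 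Finally, these tools are applied not to $m(K_{X/Y}+\Delta)$ but, inductively in $q$, to divisors of the form $q\bigl(p(K_{X/Y}+\Delta)+(p-1)A\bigr)+Q$ with $Q$ a fixed ample divisor, and the index $r>1$ is handled by the nef-threshold $t$, cyclic covers to force $t>1$, and the arithmetic Lemma \ref{lem:number_theory}; your remark that one should ``carefully track the indices using $\gcd(p,r)=1$'' does not engage with this difficulty. As written, the proposal is missing both the device that removes the base twist and the limiting/threshold argument that converts twisted positivity into nefness of $\omega_{X/Y}^{[r]}(r\Delta)$.
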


\begin{thm}
\label{thm:pushforward_nef_intro}
In the situation of Notation \ref{notation:results}, if  $\omega_{X/Y}^{[r]}(r \Delta)$ is $f$-ample and  $(X_y, \Delta_y)$ is sharply $F$-pure for all $y \in Y$, then $f_* ( \omega_{X/Y}^{[mr]}(mr \Delta) )$ is nef for all $m \gg 0$. 
\end{thm}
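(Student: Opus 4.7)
The plan is to reduce to a smooth projective curve base and then use the sharp $F$-pure splitting on fibers as the mechanism for a Frobenius bootstrap on $Y$. Since nefness of a locally free sheaf on a projective scheme can be tested after pullback along morphisms from smooth projective curves, and since the hypotheses of Notation \ref{notation:results} are preserved under such base change, one reduces at once to the case where $Y$ is a smooth projective curve. Write $\sL_n := \omega_{X/Y}^{[nr]}(nr\Delta)$; the goal becomes to show that $f_*\sL_m$ is a nef vector bundle on $Y$ for $m \gg 0$.

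Using sharp $F$-purity of every fiber $(X_y, \Delta_y)$ together with $\gcd(p,r)=1$, one fixes an integer $e \geq 1$ with $p^e \equiv 1 \pmod{r}$ for which the fiberwise sharp $F$-pure splittings globalize to a split $\sO_{X^{(e)}}$-linear inclusion
$$\sO_{X^{(e)}} \hookrightarrow F^e_{X/Y,*}\sO_X\bigl((p^e-1)\Delta\bigr),$$
where $F^e_{X/Y}\colon X \to X^{(e)} = X \times_{Y, F^e_Y} Y$ is the relative Frobenius and $\pi^{(e)}\colon X^{(e)} \to X$ the other projection. Tensoring with $(\pi^{(e)})^*\sL_m$, applying the projection formula, and using $F^{e,*}_{X/Y}(\pi^{(e)})^*\sL_m = \sL_m^{\otimes p^e}$, one obtains, with $m^* := mp^e + (p^e-1)/r$, a split inclusion
$$(\pi^{(e)})^*\sL_m \hookrightarrow F^e_{X/Y,*}\sL_{m^*}.$$
Applying $f^{(e)}_*$, using $f = f^{(e)} \circ F^e_{X/Y}$ and flat base change $(F^e_Y)^* f_* = f^{(e)}_* (\pi^{(e)})^*$, this becomes a split inclusion of $\sO_Y$-modules
$$(F^e_Y)^* f_*\sL_m \hookrightarrow f_*\sL_{m^*}.$$

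Suppose now for contradiction that $f_*\sL_m$ admits a quotient line bundle of degree $-d < 0$. Pulling back by $F^e_Y$ multiplies degrees by $p^e$, so $(F^e_Y)^* f_*\sL_m$ has a quotient of degree $-p^e d$; the split inclusion above realizes this as a quotient of $f_*\sL_{m^*}$, hence $\mu_{\min}(f_*\sL_{m^*}) \leq -p^e d$. A contradiction follows once one establishes a uniform lower bound $\mu_{\min}(f_*\sL_n) \geq -C$ for all $n \gg 0$, with $C$ independent of $n$: one then chooses $e$ so large that $p^e d > C$. For the uniform bound, apply Theorem \ref{thm:relative_canonical_nef_intro} to $\sL_1$ (whose $f$-ampleness furnishes both $f$-nefness and fiberwise semi-ampleness), which yields that $\sL_1$ is nef on the total space $X$. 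Combining nefness of $\sL_1$ with its relative ampleness, Keeler's Fujita vanishing in positive characteristic together with a Castelnuovo--Mumford regularity argument on $Y$ shows that $f_*\sL_n \otimes \sA$ is globally generated for every ample line bundle $\sA$ on $Y$ of degree at least some fixed $k$ (depending only on $f$, $\sL_1$, and $g(Y)$), for all $n$ sufficiently large; this forces $\mu_{\min}(f_*\sL_n) \geq -k$ uniformly in $n$, supplying the required bound with $C = k$.

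The chief technical hurdle is producing this uniform slope bound; without it, the Frobenius bootstrap merely yields quotients of $f_*\sL_{m^*}$ of arbitrarily large negative degree with no contradiction. The role of Theorem \ref{thm:relative_canonical_nef_intro} in the argument is precisely to upgrade the relative ampleness of $\sL_1$ into nefness on the total space, so that Fujita-type vanishing in positive characteristic can then deliver the uniform slope bound and close the bootstrap.
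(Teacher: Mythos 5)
There is a genuine gap at the heart of your bootstrap: the claimed globally split inclusion $\sO_{X^{(e)}} \hookrightarrow F^e_{X/Y,*}\sO_X((p^e-1)\Delta)$ does not follow from sharp $F$-purity of the fibers. Sharp $F$-purity (Definition \ref{defn:non_F_pure}) is a \emph{local} condition: it says the trace maps $\phi^e\colon F^e_*\sL_{e,\Delta}\to\sO_X$ are surjective as sheaf maps. It is not equivalent to, and does not imply, a global splitting of the Frobenius inclusion, not even fiber by fiber, let alone relatively over $Y$. Concretely, take $X=C\times Y\to Y$ with $C$ a smooth curve of genus at least $2$ and $\Delta=0$: every fiber is smooth, hence sharply $F$-pure, $\omega_{X/Y}$ is $f$-ample and $r=1$, yet no fiber is Frobenius split (curves of genus $\geq 2$ never are), so the inclusion $\sO_{X^{(e)}}\to F^e_{X/Y,*}\sO_X$ admits no splitting even after restriction to a single fiber. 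Even in situations where each fiber happens to be globally $F$-split, gluing the fiberwise splittings into one relative splitting would require a separate cohomological argument, which you do not supply. Without the splitting, pushing forward only gives an inclusion $(F^e_Y)^*f_*\sL_m\hookrightarrow f_*\sL_{m^*}$ as a subsheaf, and a negative-degree quotient of a subsheaf imposes no bound on $\mu_{\min}$ of the ambient bundle; your contradiction with the uniform slope bound evaporates, and the argument collapses at exactly the step that was supposed to remove the twist.

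For contrast, the paper exploits only the direction that \emph{does} follow from sharp $F$-purity, namely surjectivity of trace: sections in $S^0(X_0,\sigma(X_0,\Delta_0)\otimes\cdot)$ lift from the fiber $X_0$ (an $F$-pure center of $(X,\Delta+X_0)$) to $X$ by Schwede's result (Proposition \ref{prop:surjectivity}), the Fujita-type statements (Corollary \ref{cor:Fujita_type_S_0_equals_H_0}, Proposition \ref{prop:H_0_equals_S_0_relative}) guarantee $S^0=H^0$ on fibers for $m\gg0$, and this produces generic global generation of $f_*\sN\otimes\omega_Y(2y_0)$; the unwanted twist $\omega_Y(2y_0)$ is then killed by applying the same argument to the fiber products $X^{(n)}_Y$ (Proposition \ref{prop:semi_positive}), since the twist stays fixed while the bundle gets tensored $n$ times. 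Your last paragraph (reduction to a curve, nefness of $\sL_1$ via Theorem \ref{thm:relative_canonical_nef_intro}, Fujita vanishing giving global generation of $f_*\sL_n\otimes\sA$ for a fixed ample $\sA$) is essentially sound and recovers a twisted positivity statement, but that is the easy half; removing the twist is the real content, and the Frobenius-splitting mechanism you propose for it is not available under the stated hypotheses.
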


\begin{rem}
The assumptions  $p \nmid \ind(K_X+ \Delta)$ in the above theorems can be dropped, by replacing sharply $F$-pure by strongly $F$-regular. This is worked out in Theorems \ref{thm:index_p_divisible_nef} and \ref{thm:index_p_divisible_pushforward}.
\end{rem}

Contrary to Theorem \ref{thm:relative_canonical_nef_intro}, in Theorem \ref{thm:pushforward_nef_intro} we assumed that all fibers are sharply $F$-pure. In fact, if the base is a projective curve, then we may assume sharp $F$-purity only for a general fiber as stated in Theorem \ref{thm:pushforward_nef}. However, for higher dimensional bases we cannot assume only sharp $F$-purity of the general fiber. Indeed, there is no reason to expect nefness on a curve of $Y$ over which the singularities are not sharply $F$-pure. This of course does not explain why  we cannot allow finitely many fibers with non sharply $F$-pure fibers in Theorem \ref{thm:pushforward_nef_intro}. 
The reason for the latter, is some technical deficiency of the current theory (in Proposition \ref{prop:H_0_equals_S_0_relative} sharp $F$-purity of all fibers is assumed), which will most likely be overcome soon. On the other hand, if only the general fiber is required to be sharply $F$-pure,  one can still try to prove weak-positivity of $f_* \left( \omega_{X/Y}^{[mr]}(mr \Delta) \right)$. This issue will 
addressed in later articles.

\begin{cor}
\label{cor:ample}
In the situation of Notation \ref{notation:results}, if $\Delta = 0$, $K_{X/Y}$ is $f$-ample and for every $y \in Y$, $X_y$ is sharply $F$-pure, $\Aut(X_y)$ is finite and  there are only finitely many other $y' \in Y$ such that $X_y \cong X_{y'}$, then $\det \left(f_* \omega_{X/Y}^{[m]} \right)$ is an ample line bundle for all $m \gg 0$ and divisible enough. 
\end{cor}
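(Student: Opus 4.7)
The plan is to deduce the corollary from Theorem \ref{thm:pushforward_nef_intro} combined with an ampleness lemma of Koll\'ar type in positive characteristic.

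First, fix $m \gg 0$ divisible by $r$ so that $\omega_{X/Y}^{[m]}$ is $f$-very ample with formation commuting with base change, and so that $E_m := f_* \omega_{X/Y}^{[m]}$ is a nef vector bundle on $Y$ by Theorem \ref{thm:pushforward_nef_intro} (our hypotheses $\Delta = 0$, $K_{X/Y}$ $f$-ample, and every fiber sharply $F$-pure match its assumptions). Nefness of $\det E_m$ is then automatic; the task is to upgrade nefness to ampleness. The surjection $f^* E_m \twoheadrightarrow \omega_{X/Y}^{[m]}$ produces a closed $Y$-embedding $X \hookrightarrow \bP_Y(E_m)$ whose fibers $X_y \subset \bP^{N-1}_{k(y)}$ share a common Hilbert polynomial $P$, where $N = \rk E_m$.

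Second, this embedding determines a classifying morphism
\[
[\mu] : Y \longrightarrow \bigl[\Hilb^P(\bP^{N-1})/\PGL_N\bigr],
\]
sending $y$ to the isomorphism class of the $m$-canonically polarized variety $(X_y, \sO_{X_y}(m K_{X_y}))$. The finiteness of $\Aut(X_y)$ together with the assumption that each isomorphism class occurs only finitely often in the family imply that $[\mu]$ is quasi-finite. What remains is an ampleness lemma: any nef vector bundle $E$ on a projective $k$-scheme $Y$ admitting a quasi-finite classifying map of this shape has ample determinant.

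To prove the ampleness lemma I would test positivity against curves $C \subset Y$; nefness gives $\det E_m \cdot C \geq 0$, and if equality held then $E_m|_{\wt C}$ on the normalization would be a nef vector bundle of degree zero. By the structure theory of such bundles in positive characteristic, after a finite (possibly purely inseparable) cover $C' \to \wt C$ the associated $\PGL_N$-torsor becomes trivial, which trivializes the embedding $X \times_Y C' \hookrightarrow \bP^{N-1}_{C'}$ and forces $[\mu] \circ (C' \to Y)$ to be constant, contradicting quasi-finiteness of $[\mu]$. The main obstacle lies precisely in making this last step rigorous in positive characteristic: nef bundles of slope zero on curves are only semistable, and the associated $\PGL_N$-torsors may require Frobenius pullbacks to trivialize. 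The finite-automorphism hypothesis is used crucially here to identify the stabilizer in $\PGL_N$ of the image point with $\Aut(X_y)$, preventing isotriviality from being absorbed into the $\PGL_N$-torsor structure under these inseparable covers.
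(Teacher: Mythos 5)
Your first step (nefness of $E_m = f_* \omega_{X/Y}^{[mr]}$ via Theorem \ref{thm:pushforward_nef_intro}) coincides with the paper's, but the core of your argument --- the ``ampleness lemma'' you propose to prove by restricting to curves --- has a genuine gap exactly where you suspect it. If $\det E_m \cdot C = 0$ with $E_m$ nef, then $E_m|_{\wt C}$ is numerically flat, hence strongly semistable of degree zero; but in characteristic $p$ such bundles are \emph{not} trivialized (even projectively, even up to Frobenius or other purely inseparable covers) by finite covers in general. They correspond to representations of the $S$-fundamental group scheme, not of a finite group scheme: a strongly stable bundle of degree zero on a curve of genus at least $2$ is numerically flat but is not essentially finite, so the associated $\PGL_N$-torsor never becomes trivial on a finite cover $C' \to \wt C$. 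So the contradiction with quasi-finiteness of the classifying map cannot be extracted this way; and even granting a trivialization, constancy of the induced map $C' \to \Hilb^P(\bP^{N-1})$ does not follow from $\deg \det E_m|_C = 0$ alone --- the natural positivity one would get from a non-constant Hilbert-point map lives on determinants of pushforwards of higher multiples, which is precisely the multiplication-map structure your argument never uses.

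The paper sidesteps all of this by invoking Koll\'ar's Ampleness Lemma \cite[3.9]{Kollar_Projectivity_of_complete_moduli}, which is characteristic-free and whose proof is not a curve-by-curve nefness argument. Concretely, the paper applies it not to $E_m$ alone but to the multiplication surjection $S^d \left( f_* \omega_{X/Y}^{[i_0 r]} \right) \twoheadrightarrow f_* \omega_{X/Y}^{[d i_0 r]}$ of vector bundles (for $i_0, d \gg 0$ chosen so that $i_0 r K_{X_y}$ is very ample, formation of the pushforwards commutes with base change, and the ideal sheaves $\sI_y(d)$ are globally generated); the global generation of $\sI_y(d)$ identifies the fibers of the associated classifying map with the sets $\{y' \mid X_{y'} \cong X_y\}$, which are finite by hypothesis, and finiteness of $\Aut(X_y)$ supplies the finite-stabilizer hypothesis of the lemma, yielding ampleness of $\det f_* \omega_{X/Y}^{[d i_0 r]}$. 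If you want to keep your framework, the honest fix is to prove (or cite) a statement of exactly this Koll\'ar type rather than the curve-restriction trivialization you sketch, since the latter is false as stated in positive characteristic.
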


The author has evidence that taking determinant can be removed from the above corollary. I.e., it can be shown that $f_* \omega_{X/Y}^{[m]}$ is ample as a vector bundle. This issue will be also addressed in upcoming articles. 

In addition to the above statements, the semi-ample assumption in Theorem \ref{thm:relative_canonical_nef_intro} can be dropped on the expense that the index $r$ has to be 1, as stated in the following theorem.

\begin{thm}
\label{thm:relative_canonical_nef_2_intro}
In the situation of Notation \ref{notation:results}, if $\omega_{X/Y}^{[r]}(r \Delta)$ is $f$-nef and $r=1$, then $\omega_{X/Y}( \Delta)$ is nef. 
\end{thm}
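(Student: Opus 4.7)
The plan is to reduce to the case of a smooth projective curve base and then derive nefness of $L := \omega_{X/Y}(\Delta)$ from the pushforward-nefness Theorem \ref{thm:pushforward_nef_intro} applied to $f$-ample perturbations of $L$, the key point being that $r=1$ makes $L$ an honest line bundle so that these perturbations fit into the framework of the cited theorem after adding a suitable effective divisor to the boundary.

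First I would reduce to the case $\dim Y = 1$ with $Y$ smooth. Since nefness can be tested on curves, and since the conditions in Notation \ref{notation:results} (including sharp $F$-purity of generic fibers and $r=1$) are preserved under restriction to a very general smooth complete-intersection curve in $Y$, this reduction is standard.

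With $Y$ a smooth projective curve, I would argue by approximation. Let $A$ be an $f$-ample Cartier divisor on $X$; for each integer $m \geq 1$, the line bundle $L_m := mL + A$ is $f$-ample, and since $r=1$ one has $L_m \sim K_{X/Y} + (\Delta + D_m)$ for any effective Cartier $D_m \in |(m-1)L + A|$. The plan is to invoke a relative Bertini-type statement for sharp $F$-purity to pick $D_m$ general so that $(X_y, \Delta_y + D_m|_{X_y})$ remains sharply $F$-pure for all but finitely many $y \in Y$. Then Theorem \ref{thm:pushforward_nef_intro} (or its strongly $F$-regular variant, Theorem \ref{thm:index_p_divisible_pushforward}) applied to the pair $(X, \Delta + D_m)$ yields that $f_* \sO_X(nL_m)$ is a nef vector bundle on $Y$ for $n$ sufficiently divisible. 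By $f$-ampleness of $L_m$, the evaluation $f^* f_* \sO_X(nL_m) \twoheadrightarrow \sO_X(nL_m)$ is surjective for $n \gg 0$, exhibiting $\sO_X(nL_m)$ as a quotient of the pullback of a nef bundle and hence nef; therefore $L_m$ is nef on $X$. Dividing the class of $L_m$ by $m$ shows $L + \tfrac{1}{m}A$ lies in the nef cone, and letting $m \to \infty$ together with closedness of the nef cone forces $L$ itself to be nef.

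The main obstacle is the relative Bertini step: one needs generic members of $|(m-1)L + A|$ (or a sufficiently positive replacement) to preserve fiberwise sharp $F$-purity. This is delicate in positive characteristic because adding effective divisors to the boundary generally worsens $F$-singularities; however, standard $F$-singularity Bertini results for sufficiently ample linear systems, combined with the simplification afforded by $r=1$ which keeps all coefficients integral, should make this feasible after possibly enlarging $A$. A secondary technical point is to check that the finitely many fibers at which sharp $F$-purity is allowed to fail in Notation \ref{notation:results} do not obstruct the application of Theorem \ref{thm:pushforward_nef_intro}, which should be handled by the same kind of finite-exceptional-locus arguments already appearing in the preceding results of the paper.
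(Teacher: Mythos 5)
Your strategy is genuinely different from the paper's, but it has two gaps that I do not see how to close, plus a smaller one in the reduction step. The most serious is the existence of $D_m$: the divisor $(m-1)L+A$ is only $f$-ample, and nothing guarantees that $|(m-1)L+A|$ (or any of its multiples) is nonempty, let alone positive enough for a Bertini argument --- global positivity of $L$ over the base is exactly what the theorem is trying to establish. If you repair this by twisting with $c\,f^*H$ for an ample $H$ on $Y$, the required $c$ grows proportionally to $m$ (it must compensate the potential negativity of $(m-1)L$ in the base direction), so after dividing by $m$ the error term does not tend to $0$ and the limiting argument no longer yields nefness of $L$. The second gap is the Bertini step itself: you need $(X_y,\Delta_y+D_m|_{X_y})$ to stay sharply $F$-pure after adding a coefficient-one divisor to the boundary, and moreover Theorem \ref{thm:pushforward_nef_intro} demands this for \emph{all} $y\in Y$. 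No such statement is available in the paper, and the paper goes out of its way to avoid it: its only perturbation device (Section on indices divisible by $p$, Lemma \ref{lem:index_divisibility_removal} and Theorems \ref{thm:index_p_divisible_nef}, \ref{thm:index_p_divisible_pushforward}) requires \emph{strongly $F$-regular} fibers and adds divisors with tiny coefficients $\tfrac{1}{p^v-1}$, precisely because sharp $F$-purity is not stable under enlarging the boundary. Finally, testing nefness only on families restricted over very general complete-intersection curves in $Y$ is insufficient: a curve $C\subseteq X$ with $L\cdot C<0$ maps to a \emph{fixed} curve in $Y$ that a very general complete intersection will not contain; the correct reduction (used in the proof of Theorem \ref{thm:relative_canonical_nef_intro}) is to base change along the normalization of an arbitrary curve $C\subseteq X$ via Lemma \ref{lem:pullback_results}.

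For comparison, the paper's proof (Theorem \ref{thm:relative_canonical_nef2}) never perturbs $\Delta$ and never needs an effective auxiliary divisor. After the curve-base reduction it fixes one sufficiently positive Cartier divisor $L$ (via Theorem \ref{thm:relative_Fujita}, Corollary \ref{cor:Fujita_type_S_0_equals_H_0} and Proposition \ref{prop:Fujita_type_global_generation}) and proves by induction on $q$ that $N_q:=q(K_{X/Y}+\Delta)+L$ is nef, applying Propositions \ref{prop:semi_positive} and \ref{prop:nef} directly to $N_q$ with the \emph{original} boundary: the hypothesis that $N_q-K_{X/Y}-\Delta=(q-1)(K_{X/Y}+\Delta)+L$ be nef and $f$-ample is supplied by the induction hypothesis, not by any effectivity or Bertini input, and the $S^0=H^0$ condition is needed only on the fixed sharply $F$-pure fiber $(X_0,\Delta_0)$. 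Dividing by $q$ and letting $q\to\infty$ gives nefness of $K_{X/Y}+\Delta$; the assumption $r=1$ enters only to keep $N_q$ Cartier for every $q$. If you want to salvage your approach, you would essentially have to re-introduce this bootstrapping, at which point you recover the paper's argument.
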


For the proofs of the above statements, see Sections \ref{sec:semi_positivity_semi_ample} and \ref{sec:semi_positivity_nef}. Hoping that MMP and the moduli space of stable pairs work in positive characteristics as they do in characteristic zero, one would hope that the divisibility condition of Notation \ref{notation:results} could be removed and the sharply $F$-pure condition could be relaxed to semi-log canonical eventually. Unfortunately, the author has no evidence pro or against this (see Section \ref{sec:questions}).

The following are the main applications. The first one states the existence of a projective coarse moduli space for certain functors of stable varieties. Note that stable varieties are the  higher dimensional analogues of stable curves. According to Corollary \ref{cor:projectivity_of_moduli},  the last step of the general scheme of proving existence of projective coarse moduli spaces initiated in \cite{Kollar_Projectivity_of_complete_moduli} works if the singularities are at most sharply $F$-pure (see Section \ref{sec:projectivity} for details).

\begin{corollary_projectivity}
Let $\sF$ be a subfunctor of 
\begin{equation*}
Y \mapsto  \left. \left\{
\left.\raisebox{25pt}{   \xymatrix{ X \ar[d]_f \\ Y } } \right|
\parbox{280pt}{ \small
 $f : X \to Y$ is a flat, relatively $S_2$ and $G_1$, equidimensional, projective morphism with sharply $F$-pure fibers, such that
 there is a $p \nmid r>0$, for which   $\omega_{X/Y}^{[r]}$ is an $f$-ample line bundle, and $\Aut(X_y)$ is finite for all $y \in Y$  
}
\right\} \right/ \textrm{\small $\cong$ over $Y$}.
\end{equation*}
If $\sF$ admits
\begin{enumerate}
\item  a coarse moduli space $\pi: \sF \to V$, which is a proper algebraic space and 
\item  a morphism $\rho : Z \to \sF$ from a scheme, such that $\pi \circ \rho$ is finite and for the family $g : W \to Z$ associated to $\rho$, $\det \left( g_* \omega_{W/Z}^{[mr]} \right)$ descends to $V$ for every high and divisible enough $m$,
\end{enumerate}
then $V$ is a projective scheme. 
\end{corollary_projectivity}

The second application claims that the above positivity results hold in characteristic zero, assuming Conjecture \ref{conj:semi_log_canonical_reduction} stating that semi-log canonical equals dense sharply $F$-pure type. It should be noted that recently  Fujino gave an unconditional proof of Corollary \ref{cor:characteristic_zero} using Hodge Theory \cite{Fujino_Semi_positivity_theorems_for_moduli_problems}.

\begin{corollary_characteristic_zero}
Let  $(X, \Delta)$ be a  pair over an algebraically closed field of characteristic zero with  $\bQ$-Cartier $K_X + \Delta$ and  $f : X \to Y$  a flat, projective morphism  to a smooth projective curve. Further suppose that there is a $y_0 \in Y$, such that $\Delta$ avoids all codimension 0 and the singular codimension 1 points of $X_{y_0}$, and either 
\begin{enumerate}
 \item $(X_{y_0} , \Delta_{y_0})$ is   Kawamata log terminal, or
 \item $(X_{y_0} , \Delta_{y_0})$ is semi-log-canonical and for every model over a $\bZ$-algebra $A$ of finite type, it satisfies the statement of Conjecture \ref{conj:semi_log_canonical_reduction}. 
\end{enumerate}
Assume also that $K_{X/Y} + \Delta$ is $f$-ample (resp. $f$-semi-ample). 
Then   for $m \gg 0$ and divisible enough, $f_* \sO_X(m(K_{X/Y} + \Delta))$ is a nef vector bundle (resp. $K_{X/Y} + \Delta$ is nef).
\end{corollary_characteristic_zero}

The third application is  a special case of subadditivity of Kodaira-dimension. It states  that in  our special setup, suited for moduli theory, if both the base and the general fiber is of (log-)general type then so is the total space. 

\begin{corollary_subadditivity}
In the situation of Notation \ref{notation:results}, if furthermore $Y$ is an $S_2, G_1$, equidimensional projective variety with $K_Y$ $\bQ$-Cartier and big,  $K_{X/Y} + \Delta$ is $f$-semi-ample and  $K_F + \Delta|_F$ is big for the generic fiber $F$, then $K_X + \Delta$ is  big.
\end{corollary_subadditivity}

\subsection{Idea of the proof}
\label{sec:idea_of_the_proof}

To prove the above mentioned semi-positivity results first we show two general statements, Propositions \ref{prop:semi_positive} and \ref{prop:nef}, about semi-positivity of a line bundle and its pushforward. We consider the following situation, neglecting $\Delta$ at this time. Given a fibration $f : X \to Y$ and a Cartier divisor $N$ on $X$ with certain positivity (e.g., $N - K_{X/Y}$ is nef and $f$-ample), we want to prove positivity of $f_* \sO_X(N)$. One way to approach this problem is to try to find sections of $f_* \sN$, where $\sN:=\sO_X(N)$. For that, notice that for nice $Y$ and generic $y \in Y$, there is an isomorphism $(f_* \sN)_y \to H^0(X_y, \sN)$. So lifting every element of  $(f_* \sN)_y$ to $H^0(Y, f_* \sN)$ is equivalent to lifting every element of $H^0(X_y, \sN)$ to $H^0(X, \sN)$. Fortunately, there is a nice lifting result available for $F$-singularities by Karl Schwede, see Proposition \ref{prop:surjectivity}. This leads us to proving a global generation result for some twist of 
$f_* \sN$ in 
Proposition \ref{prop:generically_globally_generated}, which then implies nefness of the same twist of $f_* \sN$. The next step is to get rid of this twist. For that we use the product trick of Notation \ref{notation:product}, i.e., we apply our global generation result for $n$-times fiber products of $X$ with itself over $Y$. The upshot is that we obtain nefness of  $\bigotimes_{i=1}^n f_* \sN$ twisted by a line bundle. However, the twist is independent of $n$, which yields nefness of $f_* \sN$ itself. This is done in Proposition \ref{prop:semi_positive}. Then one can consider the natural morphism $f^* f_* \sN \to \sN$. If this is surjective enough and $f_* \sN$ is a nef vector bundle, $\sN$ is nef as well. This is Proposition \ref{prop:nef}. 

Having shown the general semi-positivity statements, deducing the semi-positivities of Theorems \ref{thm:relative_canonical_nef_intro}, \ref{thm:pushforward_nef_intro} and \ref{thm:relative_canonical_nef_2_intro} is still a bit of work. The most tricky is Theorem \ref{thm:relative_canonical_nef_intro}, because the index can be an arbitrary integer not divisible by $p$. In the index one case, the rough idea is as follows. We take a very positive Cartier divisor $L$, and we prove by induction on $q>0$ that $qK_{X/Y} + L$ is nef, using the general nefness result mentioned in the last sentence of the previous paragraph. Then, if this holds for all $q>0$, $K_{X/Y}$ has to be nef as well. Unfortunately, this argument brakes down when $ r:=\ind(K_{X/Y}) > 1$. In that case we have to argue by contradiction. We choose a Cartier divisor $B$, which is the pullback of an ample Cartier divisor from $Y$, and we consider the smallest $t>0$, such that $K_{X/Y} + tB$ is nef. Then similarly to the index one case, we prove 
inductively that $q(rK_{X/Y} + (r-1)tB) + L$ is nef for all $q>0$. Therefore, so is $rK_{X/Y} + (r-1)t B$, and then also $K_{X/Y} + \frac{r-1}{r} t B$. However, $\frac{r-1}{r} t < t$, which contradicts the choice of $t$, unless $K_{X/Y}$ was nef originally. Unfortunately, there is a point where one has to be a bit more careful with this argument: $(r-1)tB$ has to be Cartier. Hence, we cannot really use $t$, we have to use a rational number $\frac{a}{b}$ which is slightly bigger than $t$ and for which $(r-1) \frac{a}{b}$ is integer. However, then the question is whether $\frac{r-1}{r} \frac{a}{b}< t$ is going to hold or not. This is solved at least for $t>1$ by Lemma \ref{lem:number_theory} using elementary number theoretic considerations. Then, by pulling back our family $X \to Y$ via an adequate finite map $Y' \to Y$, we can maneuver ourselves into a situation where $t >1$.

\subsection{Notation}
\label{sec:notation}

We fix an algebraically closed  base field $k$ of positive characteristic $p$.  Every scheme is taken over this base field, and is assumed to be separated and noetherian. The generic point of a subvariety $W$ of a scheme $X$ is denoted by $\eta_W$. For any scheme $X$ over $k$, $X_{\sing}$ denotes the (reduced) closed set of $X$, where $X$ is not regular. 

In the present article, many schemes are not normal. For every such $X$ we consider  only Weil divisors that have no components contained in $X_{\sing}$. By abuse of notation, a \emph{Weil divisor} will always mean such a special divisor. They form a free $\bZ$-module under addition, which we denote by $\Weil^*(X)$. A \emph{Weil divisorial sheaf}, on an $S_2,G_1$ scheme $X$ is a  rank one reflexive subsheaf of the total space of fractions $\sK(X)$. In the present article every  Weil divisorial sheaf is invertible in codimension one, hence by the abuse of notation \emph{Weil divisorial sheaves} will mean Weil divisorial sheaves that are invertible in codimension one. The usual reference for such sheaves is \cite{Hartshorne_Generalized_divisors_on_Gorenstein_schemes}, where they are called almost Cartier divisors. It is important to note that every reflexive sheaf on an $S_2, G_1$ scheme which is invertible in codimension one can be given a Weil divisorial sheaf structure. That is, one can find an embedding of 
it into $\sK(X)$. For every $E=\sum a_D D \in \Weil^*(X)$ one can associate a Weil divisorial sheaf:
\begin{equation}
\label{eq:almost_Cartier}
 \sO_X(-E):=\{f \in \sK(X) |  \forall D:  \ord_{D} f \geq a_D  \}, 
\end{equation}
where if $D \subseteq X_{\sing}$, then $a_D=0$ necessarily, and then by $\ord_{D} f \geq a_D$ we mean that $f \in \sO_{X, \eta_D}$. 
Linear equivalence of Weil divisorial sheaves is defined by multiplying with an invertible element of $\sK(X)$ and addition by multiplying them together and taking reflexive hull. That is, $\sL \sim \sK$ if and only if there is a $f \in \sK(X)^{×}$, such that $\sL =  f \cdot \sK$ and $\sL + \sK = (\sL \cdot \sK)^{**}$. Weil-divisorial sheaves modulo linear equivalence form a group under the above addition, which is denoted by $\Pic^* (X)$.  

If $X$ is of finite type over $k$ and reduced, then there are only finitely many divisors contained in $X_{\sing}$. Hence, by \cite[Proposition 2.11.b]{Hartshorne_Generalized_divisors_on_Gorenstein_schemes}, every Weil divisorial sheaf is linearly equivalent to one of the form \eqref{eq:almost_Cartier}. Therefore the construction of \eqref{eq:almost_Cartier} yields a surjective homomorphism $\Weil^*(X) \to \Pic^*(X)$. One can show that the kernel  consists of the $\sum a_D D \in \Weil^* (X)$, associated to an $f \in \sK^{×}$ (that is, $a_D= \ord_D f$ for divisors $D$ not contained in $X_{\sing}$ and $f$ generates $\sO_{X,\eta_D}$ otherwise). Summarizing, $\Pic^*(X)$ is isomorphic to $\Weil^*(X)$ modulo linear equivalence of Weil divisors. In the current article we mostly use the latter representation of $\Weil^*(X)$. 

Similarly as above, a \emph{$\bQ$-divisor} means a formal sum of  codimension one points not contained in $X_{\sing}$ with rational coefficients.  A $\bQ$-divisor $D$ is \emph{effective}, i.e., $D \geq 0$, if all its coefficients are at least zero. For a $\bQ$-divisor $D$, $\ind(D)$ is the smallest integer $n$, such that $nD$ is an integer Cartier divisor. Given a flat projective morphism $f : X \to Y$ with $X$ being $S_2$ and $G_1$ and $Y$  Gorenstein, $\omega_X = \omega_{X/Y} \otimes f^* \omega_Y$ \cite[Lemma 4.10]{Patakfalvi_Components_of_the_moduli_space_of_stable_schemes}. Hence both $\omega_X$ and $\omega_{X/Y}$ are Weil divisorial sheaves \cite[Corollary 5.69]{Kollar_Mori_Birational_geometry_of_algebraic_varieties}, \cite[Theorem 1.9]{Hartshorne_Generalized_divisors_on_Gorenstein_schemes}. Any of their representing Weil divisors are denoted by $K_X$ and $K_{X/Y}$.



\emph{Vector bundle} means a locally free sheaf of finite rank. \emph{Line bundle} means a locally free sheaf of rank one. When it does not cause any misunderstanding, pullback is denoted by lower index. E.g., if $\sF$ is a sheaf on $X$, and $X \to Y$ and $Z \to Y$ are morphisms, then $\sF_Z$ is the pullback of $\sF$ to $X ×_Y Z$. This unfortunately is also a source of some confusion: $\sF_y$ can mean both the stalk and the fiber of the sheaf $\sF$ at the point $y$. Since both are frequently used notations in the literature, we opt to use both and hope that it will always be clear from the context which one we mean.

There are some important conventions of orders of operations, since expressions as $F^e_* \omega_X(\Delta) \otimes \sL$ are used frequently. \textbf{Push-forward has higher priority than tensor product, but twisting with a divisor has higher priority than push-forward.} E.g., the above expression means $(F^e_* (\omega_X(\Delta))) \otimes \sL$.

\subsection{Organization}

Section \ref{sec:semi_positivity} is the core of the article. It contains the details of the argument outlined in Section \ref{sec:idea_of_the_proof}. The necessary definitions, background material and technical statements can be found in Section \ref{sec:background}. Many of these technical statements involve finding uniform bounds for certain behaviors experienced in the presence of  very positive line bundles. Section \ref{sec:applications} contains the proofs of the applications of the general positivity statements. Finally, we collected some of the many questions the article brings up in Section \ref{sec:questions}.

\subsection{Acknowledgement}

The author of the article would like to thank both Karl Schwede and Kevin Tucker for organizing the AIM workshop ``Relating test ideals and multiplier ideals''. The main idea of the article came up when reading about $F$-singularities as a preparation for the workshop. He is especially thankful to Karl Schwede, who answered hundreds of questions both personally when he inivited the author to visit PSU and also in e-mail afterwards. Further the author would also like to thank him for reading a preprint version of the article. 

The author is also thankful to J\'anos Koll\'ar for reading a preprint of the article and for the useful, important advice, especially about the direction in which the initial statements should be developed further. He is further thankful to Chenyang Xu for spotting some errors in the preprint version of the article, Yifei Chen for finding a wrong citation and S\'andor Kov\'acs for the useful remarks. It should also be mentioned that many ideas of the article about how to prove semi-positivity statements stem from the works of Eckart Viehweg. The author never had the opportunity to talk to him personally, but he is greatly thankful for the mathematics he learnt reading the works of Eckart Viehweg. 

\section{Background}
\label{sec:background}

This section contains the necessary technical definitions and statements used in the arguments outlined in Section \ref{sec:idea_of_the_proof} and worked out in Section \ref{sec:semi_positivity}.

\subsection{Definitions}
\label{sec:definitions}

Here we present the definitions used in the paper from the theory of $F$-singularities. As mentioned in the introduction, these are characteristic $p$ counterparts of the singularities of the minimal model program. We give only the minimally needed definitions, we refer the reader to \cite{Schwede_Tucker_A_survey_of_test_ideals} for a general survey on the theory of $F$-singularities. As the singularities of the minimal model, the $F$-singularities show up naturally in lifting statements. This is how they appear in the present article. 

\begin{defn}
\label{defn:pair}
A \emph{pair} $(X,\Delta)$ is an $S_2, G_1$, noetherian, separated scheme over $k$ of pure dimension, with an effective  $\bQ$-divisor $\Delta$.  Note that, according to Section \ref{sec:notation}, a $\bQ$-divisor is a formal sum with rational coefficients of codimension one points that are not contained in the singular locus of $X$. The index $\ind (X, \Delta)$ is defined as  $\ind (K_X + \Delta)$, that is, the smallest positive integer $r$ such that $r(K_X + \Delta)$ is an integer Cartier divisor.
\end{defn}

\begin{notation}
\label{notation:Grothendieck_trace_Frobenius}
Let $(X,\Delta)$ be a pair, such that $K_X + \Delta$ is $\bQ$-Cartier and $p \nmid \ind(X,\Delta)$. Set $g:=\min \{ e \in \bZ^{>0} | (p^e -1)(K_X + \Delta) \textrm{ is Cartier} \}$. (Note that there is an integer $e>0$ for which $(p^e -1)(K_X + \Delta)$ is Cartier, by the index assumption and Euler's theorem. Hence $g$ exists and furthermore, these integers are exactly the multiples of $g$) For any $e \geq 0$ such that $g | e$, define
\begin{equation*}
\sL_{e,\Delta}:= \sO_X((1-p^e)(K_X + \Delta)). 
\end{equation*}
Let $F^e : X \to X$ be the $e$-th iteration of the \emph{absolute Frobenius} morphism, i.e., the map, which is the identity on points and is $r \mapsto r^{p^e}$ on the structure sheaf. Denote by $\phi^e : F^e_* \sL_{e,\Delta} \to \sO_X$ the unique extension from the Gorenstein locus  of the composition of following maps:
\begin{itemize}
\item the embedding $F^e_*  \sO_X((1-p^e)(K_X + \Delta)) \to F^e_* \sO_X((1-p^e)K_X )$ induced by $\sO_X((1-p^e)\Delta) \to \sO_X$ (note that twisting with a divisor has higher priority than pushforward according to Section \ref{sec:notation}) and
\item the twist $F^e_* \sO_X((1-p^e)K_X) \to \sO_X$ of the Grothendieck trace by $\omega_X^{-1}$.
\end{itemize}
\end{notation}

\begin{prop}
\label{prop:decreasing_chain_non_F_pure}
In the situation of Notaion \ref{notation:Grothendieck_trace_Frobenius},
\begin{equation}
\label{eq:decreasing_chain_non_F_pure}
\phi^{e'}F^{e'}_* \sL_{e',\Delta} \subseteq \phi^{e''}F^{e''}_* \sL_{e'',\Delta}, \textrm{ for }  e''|e' \textrm{ such that }  g|e'' .
\end{equation}
\end{prop}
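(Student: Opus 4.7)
The strategy I would use is to produce a factorization $\phi^{e'} = \phi^{e''} \circ \psi$ for a suitable map $\psi : F^{e'}_* \sL_{e',\Delta} \to F^{e''}_* \sL_{e'',\Delta}$; taking images then gives the claimed inclusion \eqref{eq:decreasing_chain_non_F_pure} immediately. Observe first that the hypotheses $e'' \mid e'$ and $g \mid e''$ imply $g \mid (e'-e'')$, so $\sL_{e'-e'',\Delta}$ is a line bundle and $\phi^{e'-e''}$ is well-defined in the sense of Notation \ref{notation:Grothendieck_trace_Frobenius}.

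Since each $\phi^{e}$ is defined as the unique $S_2$-extension from the Gorenstein locus, it suffices to construct $\psi$ and verify the factorization there, and then invoke uniqueness of extension to transport it to all of $X$. On the Gorenstein locus, the arithmetic identity
\[
(1-p^{e'}) \;=\; p^{e'-e''}(1-p^{e''}) + (1-p^{e'-e''})
\]
yields a canonical isomorphism $\sL_{e',\Delta} \cong F^{e'-e''*}\sL_{e'',\Delta} \otimes \sL_{e'-e'',\Delta}$. Combined with the factorization $F^{e'} = F^{e''} \circ F^{e'-e''}$ of the Frobenius and the projection formula (valid here because $\sL_{e'',\Delta}$ is a line bundle), this identifies
\[
F^{e'}_* \sL_{e',\Delta} \;\cong\; F^{e''}_* \bigl( \sL_{e'',\Delta} \otimes F^{e'-e''}_* \sL_{e'-e'',\Delta} \bigr).
\]
I would then set $\psi := F^{e''}_*(\id_{\sL_{e'',\Delta}} \otimes \phi^{e'-e''})$, which lands in $F^{e''}_* \sL_{e'',\Delta}$; the desired identity $\phi^{e'} = \phi^{e''} \circ \psi$ is the statement that the Grothendieck trace is functorial under composition of finite morphisms, twisted by $\omega_X^{-1}$ and by the embeddings $\sO_X((1-p^{e})\Delta) \hookrightarrow \sO_X$.

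The main technical point, in my view, is verifying this identity with the $\Delta$-twist correctly accounted for. The plain trace-functoriality (with $\Delta=0$) is standard, but with $\Delta$ present one must chase the embeddings $\sO_X((1-p^e)\Delta) \hookrightarrow \sO_X$ through the projection-formula isomorphism above and confirm that at each stage the subsheaf $\sL_{?,\Delta} \subseteq \sO_X((1-p^?)K_X)$ is preserved — in particular that the tensor product of the embeddings for $e''$ and $e'-e''$ matches the embedding for $e'$ under the telescoping identity. Once this compatibility is established on the Gorenstein locus, the $S_2$-extension property of $\sO_X$ upgrades the factorization globally, and \eqref{eq:decreasing_chain_non_F_pure} follows by taking images.
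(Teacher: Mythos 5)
Your proposal is correct and follows essentially the same route as the paper: decompose $e'=e''+(e'-e'')$, use the identity $(1-p^{e'})=p^{e'-e''}(1-p^{e''})+(1-p^{e'-e''})$ together with the projection formula to write $F^{e'}_*\sL_{e',\Delta}\cong F^{e''}_*\bigl(\sL_{e'',\Delta}\otimes F^{e'-e''}_*\sL_{e'-e'',\Delta}\bigr)$, and factor $\phi^{e'}$ through $\phi^{e''}$; the paper packages the compatibility you verify by hand (trace functoriality with the $\Delta$-twist, valid up to multiplication by a unit, which is harmless for comparing images) as a citation to \cite[Lemma 3.9]{Schwede_F_adjunction}.
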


\begin{proof}
Let $m:=e'/e''$ and $f:=(m-1) e''$. Then, 
\begin{equation*}
\phi^{e'} F^{e'}_* \sL_{e',\Delta} 
=  
\underbrace{\phi^{e''}   F^{e''}_* \left( \phi^{f} \otimes \id_{\sL_{e'', \Delta}} ( F^{f}_*  \sL_{f,\Delta}  \otimes  \sL_{e'',\Delta} ) \right) }_{\tiny \parbox{270pt}{ $\phi^{e''} \circ F^{e''}_* (\phi^{f} \otimes \id_{\sL_{e'',\Delta}} )  = \phi^{e'}$ up to multiplication by a unit, because of \cite[Lemma 3.9]{Schwede_F_adjunction} and the projection formula (using that $(F^e)^*  \sL \cong \sL^{p^e}$) }}
\subseteq
\phi^{e''} F^{e''}_*   \sL_{e'',\Delta}.
\end{equation*}

\end{proof}

\begin{defn}
\label{defn:non_F_pure}
In the situation of  Notation \ref{notation:Grothendieck_trace_Frobenius}, define the \emph{non-F-Pure ideal} of $(X, \Delta)$ as
\begin{equation*}
 \sigma(X,\Delta) = \bigcap_{e \geq 0  } \phi^{e \cdot g} F^{e \cdot g}_* \sL_{e \cdot g,\Delta} .
\end{equation*}
 According to \cite[Remark 2.9]{Schwede_A_canonical_linear_system}  and Proposition \ref{prop:decreasing_chain_non_F_pure} this intersection stabilizes, that is,
\begin{equation}
\label{eq:non_F_pure:big_e}
 \phi^{e \cdot g} F^{e \cdot g}_* \sL_{e \cdot g, \Delta}  \cong \sigma (X, \Delta), \textrm{ for all } e \gg 0 .
\end{equation}
Also by \cite[Remark 2.9]{Schwede_A_canonical_linear_system}, if $e>0$ is any integer such that $g|e$, then $\sigma(X, \Delta)$ is the unique largest ideal $\sI$ such that 
\begin{equation*}
 \phi^e F^e_* ( \sL_{e,\Delta} \cdot \sJ ) = \sJ.
\end{equation*}
The pair $(X, \Delta)$ is \emph{sharply $F$-pure} if $\sigma(X,\Delta)=\sO_X$.
\end{defn}

In the known counterexamples to Kodaira vanishing (e.g., \cite{Raynaud_Contre_exemple_au_vanishing_theorem}) one finds elements in adjoint linear systems that do not come from some high Frobenius. Hence, a technique to lift sections in positive characteristic is to consider only  sections of adjoint bundles that come from arbitrary high Frobenius. One such collection of sections is the following subgroup of $H^0(X, \sL)$. For the main application, see Proposition \ref{prop:surjectivity}.

\begin{defn}
\label{defn:S_0_F_pure_ideal}
In the situation of Notation \ref{notation:Grothendieck_trace_Frobenius}, if $\sL$ is a line bundle on $X$, then define
\begin{multline}
\label{eq:S_0_F_pure_ideal:definition}
S^0(X, \sigma(X,\Delta)  \otimes \sL)
\\ := \bigcap_{e  \in \bZ^{ \geq 0}} \im( H^0(X, F^{e \cdot g}_* (\sigma(X,\Delta) \otimes  \sL_{e \cdot g,\Delta}) \otimes \sL) \underset{H^0(\phi^e \otimes \sL)}{\longrightarrow} H^0(X, \sigma(X,\Delta) \otimes  \sL)).
\end{multline}
\end{defn}

\begin{rem}
It is very important to stress that $S^0(X, \sigma(X,\Delta)  \otimes \sL)$ depends on $\Delta$ and $\sL$, not only  on $\sigma(X,\Delta)  \otimes \sL$ and not even on $\sigma(X,\Delta)$ and  $\sL$. 
\end{rem}

The following proposition gives a better description of $S^0(X, \sigma(X, \Delta) \otimes \sL)$. It is the one that will be used throughout the article.

\begin{prop}
\label{prop:S_0_F_pure_ideal}
In the situation of Notation \ref{notation:Grothendieck_trace_Frobenius}, if  $\sL$ is a line bundle on $X$ then
\begin{equation}
\label{eq:S_0_F_pure_ideal:statement}
S^0(X, \sigma(X,\Delta)  \otimes \sL) = \bigcap_{e \in \bZ^{\geq 0}} \im( H^0(X, F^{e \cdot g}_* \sL_{e \cdot g, \Delta} \otimes \sL) \underset{H^0(\phi^e \otimes \sL)}{\longrightarrow} H^0(X,  \sL)).
\end{equation}

\end{prop}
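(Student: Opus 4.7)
The plan is to prove the two displayed intersections agree using the stabilization \eqref{eq:non_F_pure:big_e} of Proposition \ref{prop:decreasing_chain_non_F_pure}, which says $\phi^{eg}F^{eg}_*\sL_{eg,\Delta}=\sigma(X,\Delta)$ once $e$ is large enough. For the inclusion $\subseteq$, I would note that the sheaf inclusion $\sigma(X,\Delta)\otimes\sL_{eg,\Delta}\hookrightarrow\sL_{eg,\Delta}$ gives, after applying $F^{eg}_*(-)\otimes\sL$ and taking $H^0$, a commutative triangle with vertex $H^0(X,\sL)$. Consequently, any $s\in S^0(X,\sigma(X,\Delta)\otimes\sL)$ lifts, for each $e$, to an element of $H^0(X,F^{eg}_*(\sigma(X,\Delta)\otimes\sL_{eg,\Delta})\otimes\sL)$ whose image in $H^0(X,F^{eg}_*\sL_{eg,\Delta}\otimes\sL)$ maps to $s$ in $H^0(X,\sL)$ under $\phi^{eg}\otimes\id_\sL$. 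This direction is formal.

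For the reverse inclusion $\supseteq$, fix $e\geq 0$ and choose $f\gg 0$ so that $\phi^{fg}F^{fg}_*\sL_{fg,\Delta}=\sigma(X,\Delta)$. The projection formula identifies $F^{(e+f)g}_*\sL_{(e+f)g,\Delta}\cong F^{eg}_*(F^{fg}_*\sL_{fg,\Delta}\otimes\sL_{eg,\Delta})$, and the proof of Proposition \ref{prop:decreasing_chain_non_F_pure} already factors $\phi^{(e+f)g}$, up to a unit, as $\phi^{eg}\circ F^{eg}_*(\phi^{fg}\otimes\id_{\sL_{eg,\Delta}})$. The image of $\phi^{fg}\otimes\id_{\sL_{eg,\Delta}}$ is $\sigma(X,\Delta)\otimes\sL_{eg,\Delta}$. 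Since the finite pushforward $F^{eg}_*$ is exact and tensoring with $\sL$ preserves surjections, this produces a surjection
\[
F^{(e+f)g}_*\sL_{(e+f)g,\Delta}\otimes\sL\twoheadrightarrow F^{eg}_*\bigl(\sigma(X,\Delta)\otimes\sL_{eg,\Delta}\bigr)\otimes\sL
\]
through which $\phi^{(e+f)g}\otimes\id_\sL$ factors as post-composition with $\phi^{eg}\otimes\id_\sL$. Taking $H^0$, any $s$ in the right-hand intersection of \eqref{eq:S_0_F_pure_ideal:statement} lies in the image of $\phi^{(e+f)g}\otimes\id_\sL$, and therefore in the image of $\phi^{eg}\otimes\id_\sL$ out of $H^0(X,F^{eg}_*(\sigma(X,\Delta)\otimes\sL_{eg,\Delta})\otimes\sL)$. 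Since $e$ was arbitrary, $s\in S^0(X,\sigma(X,\Delta)\otimes\sL)$.

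The main point requiring care is the bookkeeping in the factorization step: one must check that the composition identity for $\phi^{(e+f)g}$ remains valid after twisting the whole diagram by $\sL$ via the projection formula $(F^{eg})^*\sL\cong\sL^{p^{eg}}$, exactly as already done in Proposition \ref{prop:decreasing_chain_non_F_pure}. Beyond that, everything reduces to the formal property that surjections of sheaves produce inclusions of images on global sections, and no further input (for instance, no Kodaira-type vanishing) is needed.
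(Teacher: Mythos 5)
Your proof is correct and follows essentially the same route as the paper's: the easy inclusion by dropping $\sigma(X,\Delta)$, and the reverse inclusion via the stabilization $\phi^{fg}F^{fg}_*\sL_{fg,\Delta}=\sigma(X,\Delta)$ for $f\gg 0$, the projection formula identification $F^{(e+f)g}_*\sL_{(e+f)g,\Delta}\cong F^{eg}_*(F^{fg}_*\sL_{fg,\Delta}\otimes\sL_{eg,\Delta})$, and the factorization $\phi^{(e+f)g}=\phi^{eg}\circ F^{eg}_*(\phi^{fg}\otimes\id_{\sL_{eg,\Delta}})$ up to a unit. You merely make explicit the factorization step that the paper leaves implicit by referring back to the argument of Proposition \ref{prop:decreasing_chain_non_F_pure}, so no changes are needed.
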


\begin{proof}
The difference between the left side of \eqref{eq:S_0_F_pure_ideal:definition} and \eqref{eq:S_0_F_pure_ideal:statement} is that the $\sigma(X,F)$ is omitted from the latter one. Hence the latter is bigger and in particular using \eqref{eq:S_0_F_pure_ideal:definition},
\begin{equation*}
S^0(X, \sigma(X,F) \otimes \sL) \subseteq \bigcap_{e \in \bZ^{\geq 0}} \im( H^0(X, F^{e \cdot g}_* \sL_{e \cdot g,\Delta} \otimes \sL) \to H^0(X,  \sL)).
\end{equation*}
To prove the other inclusion, consider then the following isomorphisms for any $g | e$.
\begin{multline*}
F^e_* (\sigma(X,F) \otimes  \sL_{e,\Delta} ) 
\cong
\underbrace{ F^e_* \left(  (\phi^{e'} F^{e'}_* \sL_{e',\Delta} ) \otimes  \sL_{e,\Delta}   \right) }_{\textrm{by \eqref{eq:non_F_pure:big_e}, for $e' \gg 0$ such that $g | e'$}}
 \cong
\underbrace{F^e_* \left( \phi^{e'} \otimes \id_{\sL_{e, \Delta}}  \left( F^{e'}_* \left(\sL_{e',\Delta} \otimes  \sL_{e,\Delta}^{p^{e'}} \right) \right)  \right)  }_{\textrm{projection formula}}
\\ \cong
F^e_* (\phi^{e'} \otimes \id_{\sL_{e, \Delta}} ( F^{e'}_* \sL_{e + e', \Delta}))
\cong 
F_*^{e}(\phi^{e'} \otimes \id_{\sL_{e, \Delta}})  (F^{e+e'}_* \sL_{e + e', \Delta})
\end{multline*}
This yields a homomorphism
\begin{multline*}
H^0(X, F^{e+e'}_* \sL_{e + e',\Delta} \otimes \sL ) 
\underset{H^0 \left( F^e_* (\phi^{e'} \otimes \id_{\sL_{e, \Delta}}) \otimes \id_{\sL} \right) }{\xrightarrow{\hspace*{3cm}}}
H^0 \left( X, \left( F^e_* (\phi^{e'} \otimes \id_{\sL_{e, \Delta}}) ( F^{e'}_* \sL_{e + e',\Delta}) \right) \otimes \sL \right) 
\\ \cong 
H^0(X, F^e_* (\sigma(X,F) \otimes  \sL_{e,\Delta}) \otimes \sL ).
\end{multline*}
Hence,
\begin{multline*}
\im (H^0(X, F^e_* (\sigma(X,F) \otimes \sL_{e,\Delta}) \otimes \sL) \to  H^0(X, \sigma(X,F) \otimes \sL) )
\\ \supseteq 
\im (H^0(X, F^{e+e'}_* \sL_{e + e',\Delta} \otimes \sL ) \to H^0(X,  \sL) ),
\end{multline*}
and in particular then using \eqref{eq:S_0_F_pure_ideal:definition} again,
\begin{equation*}
S^0(X, \sigma(X,F) \otimes \sL) \supseteq \bigcap_{e \in \bZ^{>0}} \im( H^0(X, F^{e \cdot g}_* \sL_{e \cdot g, \Delta} \otimes \sL) \to H^0(X, \sL)).
\end{equation*}
\end{proof}

\begin{rem}
\label{rem:S_0_F_pure_ideal_stabilization}
In the situation of Definition \ref{defn:S_0_F_pure_ideal}, if $X$ is projective, then $H^0(X,\sL)$ is finite dimensional. Therefore, \begin{multline*}
 S^0(X, \sigma(X,\Delta)  \otimes \sL) 
=
\im \left( H^0(X, F^{e \cdot g}_* (\sigma(X,\Delta) \otimes  \sL_{e \cdot g,\Delta}) \otimes \sL ) \to H^0(X, \sigma(X,\Delta) \otimes \sL)\right)
\\ = 
\im \left( H^0(X, F^{e \cdot g}_* \sL_{e \cdot g,\Delta} \otimes \sL) \to H^0(X,  \sL) \right)  
\end{multline*}
for all $e \gg 0$.
\end{rem}

\subsection{Cohomology and base change}

Here, we list a couple of standard statements about cohomology and base change as a reference for the following sections. We also introduce the product construction in Notation \ref{notation:product}, one of the main tricks of the article.

\begin{lem} \cite[Theorem 12.11]{Hartshorne_Algebraic_geometry}
\label{lem:cohomology_and_base_change_pushforward_zero}
Let  $f : X \to Y$ be a projective morphism over a noetherian scheme, and $\sG$ a coherent sheaf on $X$ flat over $Y$, such that for all $i>0$, $R^i f_* \sG = 0$. Then, the natural morphisms
\begin{equation*}
 f_* \sG \otimes k(y) \to H^0(X_y, \sG)
\end{equation*}
are isomorphisms, and for all $y \in Y$ and $i>0$,
\begin{equation*}
 H^i(X_y, \sG) =0.
\end{equation*}
\end{lem}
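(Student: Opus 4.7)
The strategy is the standard one from Grothendieck--Hartshorne, exploiting a universal complex that computes the cohomology of the fibers. First I would reduce to the case $Y = \Spec A$ affine. The fundamental input is that, since $f$ is projective and $\sG$ is $Y$-flat and coherent, there exists a bounded complex $L^\bullet$ of finitely generated \emph{free} $A$-modules, concentrated in non-negative degrees, together with a functorial isomorphism
\[
H^i(L^\bullet \otimes_A M) \;\cong\; H^i(X \times_Y \Spec A,\; \sG \otimes_A M)
\]
for every $A$-module $M$. One produces $L^\bullet$ by replacing the \v{C}ech complex of a finite affine cover of $X$ by a quasi-isomorphic complex of finitely generated frees. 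Taking $M = A$ gives $H^i(L^\bullet) \cong R^i f_* \sG$.

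Next, the hypothesis $R^i f_* \sG = 0$ for $i > 0$ translates to $H^i(L^\bullet) = 0$ for $i > 0$, i.e.\ $L^\bullet$ is a resolution of $f_* \sG = H^0(L^\bullet)$. Setting $Z^i := \ker(L^i \to L^{i+1})$ and $B^i := \im(L^{i-1} \to L^i)$, I would show by descending induction on $i$ that $Z^i$ and $B^i$ are flat over $A$ for every $i \geq 1$. Indeed, at the top degree $n$ one has $Z^n = L^n$ flat, and $H^n = 0$ forces $B^n = Z^n = L^n$; the short exact sequence $0 \to Z^{n-1} \to L^{n-1} \to B^n \to 0$ with two flat outer terms then forces $Z^{n-1}$ flat. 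Iterating, and using $B^i = Z^i$ coming from $H^i = 0$ for $1 \leq i \leq n$, gives flatness of all $Z^i, B^i$ for $i \geq 1$; the sequence $0 \to f_* \sG \to L^0 \to B^1 \to 0$ then shows that $f_* \sG$ itself is flat.

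With these flatness properties, tensoring the short exact sequences $0 \to Z^i \to L^i \to B^{i+1} \to 0$ with any $A$-module $M$ preserves exactness, from which one reads off $H^0(L^\bullet \otimes_A M) \cong f_* \sG \otimes_A M$ and $H^i(L^\bullet \otimes_A M) = 0$ for $i > 0$. Specializing to $M = k(y)$ and using the universal property of $L^\bullet$ applied to the fiber $X_y = X \times_Y \Spec k(y)$ yields both assertions of the lemma. The only real obstacle in this plan is setting up the complex $L^\bullet$ in the first step; once it is available (as in Hartshorne III.12.2), the rest is elementary homological algebra over $A$.
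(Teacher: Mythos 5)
Your proposal is correct: the paper offers no argument of its own for this lemma, citing \cite{Hartshorne_Algebraic_geometry} instead, and your proof is precisely the standard one behind that citation — reduce to an affine base, invoke the bounded complex $L^\bullet$ of finite free modules computing $H^i(X_M,\sG\otimes_A M)$ universally, and then use the vanishing $H^i(L^\bullet)=0$ for $i>0$ plus the flatness of the kernels and images (note the outer terms of $0\to Z^{n-1}\to L^{n-1}\to B^n\to 0$ are $Z^{n-1}$ and $B^n$; what you actually use, correctly, is that flatness of $L^{n-1}$ and of the quotient $B^n$ forces flatness of $Z^{n-1}$) to see that tensoring with $k(y)$ commutes with taking cohomology. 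No gaps beyond that harmless wording slip.
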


\begin{lem} \cite[Theorem 9.9 and Corollary 12.9]{Hartshorne_Algebraic_geometry}
\label{lem:cohomology_and_base_change_fibers_zero}
Let  $f : X \to Y$ be a projective morphism over a noetherian, integral scheme, and $\sG$ a coherent sheaf on $X$ flat over $Y$, such that for all $i>0$, and $y \in Y$,
\begin{equation*}
 H^i(X_y, \sG) =0.
\end{equation*}
Then $R^i f_* \sG=0$ for $i>0$, $f_* \sG$ is locally free, and the natural homomorphisms 
\begin{equation*}
 f_* \sG \otimes k(y) \to H^0(X_y, \sG)
\end{equation*}
are isomorphisms.
\end{lem}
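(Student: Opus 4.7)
The plan is to deduce both assertions from Grothendieck's cohomology and base change theorem (cf.\ \cite[III.12]{Hartshorne_Algebraic_geometry}) by descending induction on the cohomological degree, invoking integrality of $Y$ only at the very end.

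First I would reduce to the case where $Rf_{*}\sG$ is globally represented by a bounded complex $K^{\bdot}$ of finite-rank locally free sheaves on $Y$. This is standard for a projective morphism with an $f$-flat coherent sheaf over a noetherian base, and it is legitimate since every conclusion of the lemma is local on $Y$. In this setup the fiber cohomologies are $H^i(X_y,\sG|_{X_y}) \cong H^i(K^{\bdot}\otimes k(y))$, the Euler characteristics $\chi(X_y,\sG|_{X_y})$ are locally constant in $y$, and the natural comparison maps $\varphi^i(y)\colon R^if_{*}\sG\otimes k(y) \to H^i(X_y,\sG|_{X_y})$ fit into the usual base change exact sequences.

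Next I would prove $R^if_{*}\sG = 0$ for $i>0$ by descending induction on $i$. For $i$ strictly larger than the relative dimension of $f$ both sides of $\varphi^i(y)$ vanish trivially. For the inductive step: if $\varphi^{i+1}(y)$ is an isomorphism for every $y$, the base change criterion forces $\varphi^i(y)$ to be surjective; combined with the hypothesis $H^i(X_y,\sG|_{X_y})=0$ this yields $R^if_{*}\sG\otimes k(y)=0$ for every $y$, and coherence plus Nakayama's lemma then gives $R^if_{*}\sG=0$.

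Finally, with the higher direct images vanishing, the surjectivity half of base change in degree zero makes $\varphi^0(y)$ surjective for every $y$. To upgrade this to an isomorphism and to obtain local freeness of $f_{*}\sG$ I would apply Grauert's criterion: it suffices that $y \mapsto \dim_k H^0(X_y,\sG|_{X_y})$ be locally constant on $Y$. But the higher vanishing gives $\dim_k H^0(X_y,\sG|_{X_y}) = \chi(X_y,\sG|_{X_y})$, which is locally constant on $Y$, and integrality of $Y$ (hence connectedness) makes it globally constant. I do not anticipate any substantial obstacle beyond carefully citing the existence of the bounded locally free complex $K^{\bdot}$ in the noetherian generality, which is standard.
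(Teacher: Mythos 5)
The paper gives no argument of its own for this lemma; it is quoted straight from Hartshorne (Theorem III.9.9, which yields constancy of the Euler characteristic of a flat family over an integral base, and Corollary III.12.9, Grauert's criterion). Your overall route --- kill the higher direct images with base-change machinery, then treat degree zero via Grauert using $h^0=\chi$ and integrality --- is exactly the route those citations encode, and your degree-zero step is fine.

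There is, however, a genuine gap in the inductive step proving $R^if_*\sG=0$ for $i>0$. You argue that $\varphi^i(y)\colon R^if_*\sG\otimes k(y)\to H^i(X_y,\sG_y)=0$ is surjective and that, ``combined with $H^i(X_y,\sG_y)=0$,'' this gives $R^if_*\sG\otimes k(y)=0$. But surjectivity onto the zero module carries no information about the source; what you need is injectivity, i.e.\ that $\varphi^i(y)$ is an isomorphism. That is precisely the first half of cohomology and base change, \cite[Theorem III.12.11(a)]{Hartshorne_Algebraic_geometry}: since the target vanishes, $\varphi^i(y)$ is trivially surjective, hence an isomorphism in a neighborhood of $y$, so $R^if_*\sG\otimes k(y)=0$ for every $y$ and Nakayama finishes --- note this makes your descending induction unnecessary, as surjectivity is automatic in every positive degree. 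Relatedly, your auxiliary claim that ``$\varphi^{i+1}(y)$ an isomorphism for all $y$ forces $\varphi^i(y)$ surjective'' is false in general: Theorem III.12.11(b) requires $R^{i+1}f_*\sG$ to be locally free (the Poincar\'e bundle on $E\times\widehat{E}\to\widehat{E}$ with $i=0$ is a counterexample). In your induction this hypothesis happens to hold because $R^{i+1}f_*\sG=0$, but it must be invoked explicitly. Once these points are repaired the proof closes; in fact parts (a) and (b) of Theorem III.12.11 alone already give all three conclusions (vanishing, base change in degree $0$, and local freeness of $f_*\sG$) without Grauert and without integrality of $Y$.
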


\begin{notation}
\label{notation:product}
For a morphism $f : X \to Y$ of schemes, define
\begin{equation*}
X^{(m)}_Y :=\underbrace{X ×_Y X ×_Y \dots ×_Y X}_{\textrm{$m$
times}} ,
\end{equation*}
and $f^{(m)}_Y : X^{(m)}_Y \to Y$ is the natural induced map. If $\sF$ is a sheaf of $\sO_X$-modules, then 
\begin{equation*}
\sF^{(m)}_Y := \bigotimes_{i=1}^m p_i^* \sF,
\end{equation*}
where $p_i$ is the $i$-th projection $X^{(m)}_Y \to X$. Similarly, if $\Gamma$ is a divisor on $Y$ and $f$ is flat, then 
\begin{equation*}
\Gamma^{(m)}_Y := \sum_{i=1}^m p_i^* \Gamma,
\end{equation*}
In most cases, we omit $Y$ from our notation. I.e., we use $X^{(m)}$, $\Gamma^{(m)}$, $f^{(m)}$ and $\sF^{(m)}$ instead of $X^{(m)}_Y$,  $\Gamma^{(m)}_Y$, $f^{(m)}_Y$ and $\sF^{(m)}_Y$, respectively. 
\end{notation}

\begin{lem}
\label{lem:cohomology_and_base_change_product}
Let $f : X \to Y$ be a projective flat morphism over a noetherian scheme and $\sG$ a coherent sheaf on $X$ flat over $Y$, such that $H^i(X_y, \sG)=0$ for all $i>0$ and $y \in Y$. Then, using Notation \ref{notation:product}, the natural morphisms
\begin{equation}
\label{eq:cohomology_and_base_change_product:locally_free}
f^{(n)}_* (\sG^{(n)} )  \to \bigotimes_{i=1}^n f_* \sG
\end{equation}
and 
\begin{equation}
\label{eq:cohomology_and_base_change_product:base_change}
f^{(n)}_* (\sG^{(n)} ) \otimes k(y) \to H^0(X_y^{(n)}, \sG^{(n)})
\end{equation}
are isomorphisms.
\end{lem}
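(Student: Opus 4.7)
My plan is to argue by induction on $n$, with Lemma \ref{lem:cohomology_and_base_change_fibers_zero} as the main tool. The base case $n=1$ is trivial for \eqref{eq:cohomology_and_base_change_product:locally_free} and is exactly Lemma \ref{lem:cohomology_and_base_change_fibers_zero} for \eqref{eq:cohomology_and_base_change_product:base_change}; that lemma also gives local freeness of $f_*\sG$ and the vanishing $R^i f_* \sG = 0$ for $i>0$, both of which will be used freely below.

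For the inductive step, I would first verify the hypotheses needed to apply Lemma \ref{lem:cohomology_and_base_change_fibers_zero} to the pair $(f^{(n)}, \sG^{(n)})$. Projectivity and $Y$-flatness of $f^{(n)}$, and $Y$-flatness of $\sG^{(n)}$, follow because each of these is obtained by iterated base change, composition, and tensor product from the analogous objects for $f$ and $\sG$, operations under which flatness and projectivity are preserved. The nontrivial hypothesis is the fiber vanishing $H^i(X^{(n)}_y, \sG^{(n)}|_{X^{(n)}_y}) = 0$ for $i > 0$. Since $X^{(n)}_y$ is the $n$-fold fiber product of $X_y$ over the field $k(y)$ and $\sG^{(n)}|_{X^{(n)}_y} = \bigotimes_{j=1}^n p_j^*(\sG|_{X_y})$, the K\"unneth formula for proper schemes over a field yields
$$H^i(X^{(n)}_y, \sG^{(n)}|_{X^{(n)}_y}) \cong \bigoplus_{i_1+\cdots+i_n=i} \bigotimes_{j=1}^n H^{i_j}(X_y, \sG|_{X_y}),$$
which vanishes for $i>0$ because at least one $i_j$ is positive and the corresponding factor is zero by hypothesis. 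Lemma \ref{lem:cohomology_and_base_change_fibers_zero} then delivers \eqref{eq:cohomology_and_base_change_product:base_change}, together with the auxiliary facts that $f^{(n)}_* \sG^{(n)}$ is locally free and $R^i f^{(n)}_* \sG^{(n)} = 0$ for $i>0$, which feed the next inductive step.

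For \eqref{eq:cohomology_and_base_change_product:locally_free}, both the source $f^{(n)}_* \sG^{(n)}$ and the target $\bigotimes_i f_*\sG$ are now known to be locally free on $Y$, so it suffices to show that the natural morphism is an isomorphism on every fiber $k(y)$. Using \eqref{eq:cohomology_and_base_change_product:base_change}, the fiber of the source is $H^0(X^{(n)}_y, \sG^{(n)}|_{X^{(n)}_y})$, while the fiber of the target is $\bigotimes_j H^0(X_y, \sG|_{X_y})$ via the $n=1$ base-change isomorphism; these two are canonically identified by K\"unneth. The main (and essentially only) obstacle is to check that the natural morphism from the statement of the lemma indeed induces this K\"unneth identification on fibers rather than some unrelated map, which I would handle by unwinding the definitions of $\sG^{(n)}$ and the projections $p_i$ and appealing to functoriality of pullback and pushforward.
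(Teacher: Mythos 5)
Your proof is correct, but for \eqref{eq:cohomology_and_base_change_product:locally_free} it takes a genuinely different route from the paper. The first half is the same: the paper also checks that $\sG^{(n)}$ is flat over $Y$, gets the fiberwise vanishing $H^i(X^{(n)}_y,\sG^{(n)})=0$ for $i>0$ from the K\"unneth formula over $k(y)$, and then applies Lemma \ref{lem:cohomology_and_base_change_fibers_zero} to both $\sG$ and $\sG^{(n)}$ to obtain \eqref{eq:cohomology_and_base_change_product:base_change}, local freeness, and the vanishing of higher direct images. For \eqref{eq:cohomology_and_base_change_product:locally_free}, however, the paper invokes a relative, derived-category K\"unneth formula over the base, $Rf^{(n)}_*\sG^{(n)}\simeq\bigotimes^L_{i=1}^n Rf_*\sG$, and then strips the derived decorations using $R^if_*\sG=0$ and the local freeness of $f_*\sG$; you instead observe that both sides are locally free and reduce to a fiberwise comparison, where the identification is the degree-zero K\"unneth isomorphism over the residue field. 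Your version is more elementary in that it only uses the K\"unneth formula over a field (already needed for the vanishing step) together with Nakayama, while the paper's version gets the sheaf-level isomorphism in one stroke but at the cost of the heavier relative K\"unneth machinery. The compatibility point you flag (that the natural morphism induces the K\"unneth map on fibers) is indeed the only thing to check in your approach, but it is a routine functoriality statement — the natural map commutes with the base-change maps and restricts on the fiber to the external-product map on $H^0$ — and the paper's own proof is no more explicit about matching its chain of isomorphisms with the stated natural morphism. Two minor remarks: your induction on $n$ is not actually needed, since the K\"unneth argument over $k(y)$ handles all $n$ at once (as in the paper); and note that Lemma \ref{lem:cohomology_and_base_change_fibers_zero} as stated assumes $Y$ integral, a hypothesis absent from the present lemma — this mismatch is in the paper's own proof as well and is harmless since the underlying cohomology-and-base-change criterion does not need integrality.
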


\begin{proof}
First note, that since both $\sG$ and $X$ are flat over $Y$, $p_i^* \sG$ is flat over $Y$ as well. However then $\sG^{(n)} = \bigotimes_{i=1}^n p_i^* \sG$ is also flat over $Y$. By the assumptions and Künneth formula, $H^i(X_y, \sG^{(n)})=0$ for all $i>0$ and $y \in Y$. Therefore, by applying Lemma \ref{lem:cohomology_and_base_change_fibers_zero} for both $\sG^{(n)}$ and $\sG$, one obtains that $R^i f_*^{(n)}  \sG^{(n)}=0$ and $R^i f_*  \sG=0$ for $i>0$,  $f_* \sG$ is locally free, and that \eqref{eq:cohomology_and_base_change_product:base_change} holds. The following isomorphisms show \eqref{eq:cohomology_and_base_change_product:locally_free}.
\begin{equation*}
 f_*^{(n)}  \sG^{(n)}
\cong
\underbrace{R f_*^{(n)}  \sG^{(n)}}_{R^i f_*^{(n)}  \sG^{(n)}=0 \textrm{ for } i>0}
\cong
\underbrace{\bigotimes_L {}_{i=1}^n R f_*  \sG}_{\textrm{Künneth formula}}
\cong
\underbrace{\bigotimes_L {}_{i=1}^n f_*  \sG}_{R^i f_*  \sG=0 \textrm{ for } i>0}
\cong
\underbrace{\bigotimes_{i=1}^n f_*  \sG}_{f_* \sG \textrm{ is locally free}}
\end{equation*}

\end{proof}

\subsection{Global generation}

Here we prove a Fujita type uniform global generation result for flat families in Proposition \ref{prop:Fujita_type_global_generation}. The main tool  is the relative version of Fujita vanishing, which is also crucial for many other statements of the article.

\begin{thm} \cite[Theorem 1.5]{Keller_Ample_filters_of_invertible_sheaves}
\label{thm:relative_Fujita} {  \rm \bf (Relative Fujita vanishing)}
Let $f : X \to Y$ be a projective morphism over a noetherian scheme, and $\sL$ an $f$-ample line bundle on $X$. Then for all coherent sheaves $\sF$ on $X$ there is an $M>0$, such that for all $m \geq M$, $i>0$ and $f$-nef line bundle $\sK$, 
\begin{equation*}
R^i f_* ( \sF \otimes \sL^m \otimes \sK)= 0. 
\end{equation*}
\end{thm}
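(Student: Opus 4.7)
The plan is to bootstrap from the absolute Fujita vanishing over a field to the relative setting by standard reductions. Since the assertion is local on $Y$ and the formation of $R^i f_*$ commutes with restriction to affine opens, I would first assume $Y = \Spec A$ is affine noetherian; then $R^i f_*(\sF \otimes \sL^m \otimes \sK) = 0$ is equivalent to the vanishing of $H^i(X, \sF \otimes \sL^m \otimes \sK)$. A noetherian approximation argument lets us further assume that $A$ is of finite type over $\bZ$, because all the data $(X \to \Spec A, \sF, \sL, \sK)$ are finitely presented, descend to some finitely generated subring $A_0 \subseteq A$, and faithfully flat base change $A_0 \to A$ preserves both $f$-ampleness/$f$-nefness and the desired vanishing. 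Choosing $N \gg 0$ so that $\sL^N$ is relatively very ample yields a closed immersion $\iota : X \hookrightarrow \bP^n_Y$ with $\iota^* \sO(1) = \sL^N$. Writing $m = qN + s$ with $0 \leq s < N$ and absorbing the finitely many residues $s$ into $\sF$ (by replacing $\sF$ with $\sF \otimes \sL^s$), one reduces to proving the vanishing on $\bP^n_Y$ with $\sL = \sO(1)$ after pushing forward along $\iota$.

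I would then proceed by noetherian induction on the finite-type $\bZ$-scheme $Y$. Over the generic point $\eta$ of a component of $Y$, the fiber $X_\eta$ is projective over a field, and the absolute Fujita vanishing produces a bound $M_\eta$ (depending only on $\sF|_{X_\eta}$) that is uniform across \emph{all} nef line bundles on $X_\eta$. Semicontinuity together with cohomology and base change then propagates this fiberwise vanishing over a dense open $U \subseteq Y$ with a uniform bound valid for every $f$-nef $\sK$, since the restriction of any $f$-nef $\sK$ to a fiber over $U$ is nef and hence falls under the fiberwise Fujita bound. Applying the inductive hypothesis to the proper closed complement $Y \setminus U$ (which by noetherian induction admits its own uniform bound) and taking the maximum of the finitely many resulting constants produces the required $M$.

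The main obstacle is precisely the uniformity across the $f$-nef line bundle $\sK$: nef line bundles form an unbounded family on $X$, so one cannot hope to apply Serre vanishing separately for each $\sK$ and then take a supremum. This is exactly the content that the absolute Fujita theorem supplies fiberwise --- a single bound $M_\eta$ depending only on $\sF$ (and $\sL$) that works simultaneously for every nef twist --- and the real work of the argument is propagating this fiberwise uniformity to a relative uniformity via cohomology and base change. The spreading-out step is delicate because the Grothendieck complex computing cohomology on fibers depends on $\sK$, but once the fiberwise Fujita bound is in hand, the standard semicontinuity machinery suffices to turn it into a bound valid on an open of $Y$, and noetherian induction finishes the job.
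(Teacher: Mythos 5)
You should first note that the paper does not prove this statement at all: it is quoted as \cite[Theorem 1.5]{Keller_Ample_filters_of_invertible_sheaves}, and Keeler's proof runs Fujita's original induction on the dimension of the support of $\sF$ (d\'evissage to sheaves of the form $\sO_Z$-modules, short exact sequences coming from suitably chosen sections of powers of $\sL$, all nef twists handled simultaneously) directly over an arbitrary noetherian base. Your proposal instead tries to deduce the relative statement from the absolute one on fibers by spreading out, and this is where it breaks. Semicontinuity and cohomology-and-base-change operate one coherent sheaf at a time: for each \emph{fixed} pair $(m,\sK)$ with $m \geq M_\eta$, vanishing of $H^i$ on the generic fiber gives an open neighborhood $U_{m,\sK}$ of $\eta$ on which the fiberwise cohomology, and hence $R^if_*$, vanishes --- but that open set genuinely depends on $(m,\sK)$, and the family of all $f$-nef line bundles is unbounded, so it is not parametrized by any scheme of finite type and there is no common Grothendieck complex. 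There is therefore no ``standard semicontinuity machinery'' producing a single dense open $U$ and a single $M$ valid for every $\sK$ at once; the intersection of the $U_{m,\sK}$ may contain no open set. What your argument actually needs is a fiberwise Fujita bound that is \emph{uniform over all fibers in a neighborhood of $\eta$}, and that uniform-in-families statement is essentially as hard as the relative theorem itself --- it does not follow from Fujita vanishing at the generic point. (Contrast this with Lemma \ref{lem:B_1_vanishing_uniform} in the paper, where uniformity over a varying family of sheaves is obtained only because that family is bounded and captured by a Quot scheme; nothing of the sort is available for the family of all nef twists.)

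A secondary problem is the noetherian approximation step. Each individual $\sK$ descends to some finitely generated subring of $A$, but not to a fixed one, so a bound obtained over $A_0$ does not a priori apply to twists that only exist over larger subrings; moreover $A_0 \hookrightarrow A$ is not flat, so cohomology does not base change, and $f$-nefness of $\sK$ on $X$ need not descend to nefness of a model $\sK_\mu$ over a finite level, since the approximating scheme has fibers over points not in the image of $\Spec A$. This reduction is also unnecessary: if the spreading-out step worked one could run noetherian induction on $Y$ itself, using absolute Fujita vanishing over the residue field of a generic point. The reductions to an affine base and to $m$ in finitely many residue classes modulo a relatively very ample power are fine, but the core of the theorem --- uniformity in the nef twist over a relative base --- is exactly the part your sketch leaves unproved, and the known proofs obtain it by redoing Fujita's induction over the base rather than by specializing to fibers.
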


\begin{thm} (e.g., \cite[Theorem 1.8.5]{Lazarsfeld_Positivity_in_algebraic_geometry_I})
\label{thm:Castelnuovo_Mumford_regularity}
If on a projective scheme $X$, $\sL$ is a globally generated ample line bundle and $\sF$ a coherent sheaf such that $H^i(X, \sF \otimes \sL^{-i})=0$ for $i>0$, then $\sF$ is globally generated.
\end{thm}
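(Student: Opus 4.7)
The plan is to follow the classical Castelnuovo--Mumford regularity argument, by induction on $\dim \Supp \sF$. The base case $\dim \Supp \sF = 0$ is trivial, since a coherent sheaf on a finite set is automatically globally generated (its global sections are its stalks). For the inductive step, it suffices to verify that $\sF$ is globally generated at each closed point $x \in \Supp \sF$.

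Fix such a point $x$. Since $\sL$ is globally generated and $k$ is infinite (algebraically closed), one can choose a section $s \in H^0(X, \sL)$ whose zero locus $H$ avoids both $x$ and all associated points of $\sF$. Multiplication by $s$ then yields a short exact sequence
\begin{equation*}
0 \longrightarrow \sF \otimes \sL^{-1} \xrightarrow{\;\cdot s\;} \sF \longrightarrow \sF|_H \longrightarrow 0.
\end{equation*}
Twisting with $\sL^{-i}$ for each $i \geq 0$ and taking the long exact sequence in cohomology, the hypothesis $H^i(X, \sF \otimes \sL^{-i}) = 0$ for $i > 0$ and $H^{i+1}(X, \sF \otimes \sL^{-i-1}) = 0$ forces $H^i(H, (\sF|_H) \otimes (\sL|_H)^{-i}) = 0$ for $i > 0$. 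Since $\sL|_H$ is again a globally generated ample line bundle on the projective scheme $H$ and $\dim \Supp (\sF|_H) < \dim \Supp \sF$, the induction hypothesis gives that $\sF|_H$ is globally generated.

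It remains to lift generators of $\sF|_H$ near $x$ to generators of $\sF$. For this I would apply $H^0(X, -)$ to the above sequence: the vanishing $H^1(X, \sF \otimes \sL^{-1}) = 0$ (which is the $i=1$ case of the hypothesis) implies that the restriction map $H^0(X, \sF) \to H^0(H, \sF|_H)$ is surjective. Since $x \notin H$ and $\sF|_H$ is globally generated, the images of the lifted sections, together with $\sF \otimes \sL^{-1} \to \sF$ contributing nothing at the stalk over $x$ (as $H$ passes through no associated point), generate $\sF_x$ modulo the maximal ideal.

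The only real subtlety is the general-position choice of $s$ avoiding $x$ and the associated points of $\sF$ simultaneously; this is where one uses algebraic closedness of $k$ and the fact that $\sL$ is globally generated, so the linear system $|\sL|$ is base-point free and its members cover an open set of each relevant fiber. Everything else is formal manipulation of the long exact sequence and the inductive hypothesis. I would expect the hyperplane section step---verifying that the hypotheses transfer cleanly to $\sF|_H$---to be the most delicate piece, but it follows directly from the exact sequence above once $s$ is chosen correctly.
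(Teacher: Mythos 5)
There is a genuine gap, and it sits precisely at the step you flagged as the conclusion. You choose the section $s$ so that its zero scheme $H$ \emph{avoids} the point $x$ at which you want to prove generation. But then the exact sequence $0 \to \sF \otimes \sL^{-1} \xrightarrow{\,s\,} \sF \to \sF|_H \to 0$ carries no information at $x$: since $s$ is a unit near $x$, the subsheaf $s\cdot(\sF\otimes\sL^{-1})$ equals all of $\sF$ in a neighborhood of $x$ and $(\sF|_H)_x = 0$, so it contributes \emph{everything}, not ``nothing,'' at the stalk over $x$ (the condition that $H$ misses the associated points governs injectivity of $s$, not the stalk at $x$). Consequently, global generation of $\sF|_H$ and surjectivity of $H^0(X,\sF)\to H^0(H,\sF|_H)$ give sections of $\sF$ whose values at $x$ are completely uncontrolled, and Nakayama cannot be invoked. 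The obvious repair---choosing $H$ to pass \emph{through} $x$ while still avoiding the associated points of $\sF$---does not always work either: $x$ may itself be an associated point of $\sF$ (an isolated or embedded point of the support), and even when it is not, every section of $\sL$ vanishing at $x$ may be forced to vanish at some associated point, because the finite morphism defined by $|\sL|$ (ample and globally generated, but not necessarily an embedding) can identify $x$ with such a point. So the induction, as organized around a fixed point $x$, cannot be closed.

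The statement is Mumford's theorem, which the paper simply cites (Lazarsfeld, Theorem 1.8.5) rather than proves; the standard argument there uses the same hyperplane-section sequence you wrote, but aims it at a different target where the point $x$ never appears. One shows, by induction on dimension with $H$ avoiding only the associated points, that $\sF\otimes\sL$ is again $0$-regular and that the multiplication maps $H^0(X,\sF\otimes\sL^{m})\otimes H^0(X,\sL)\to H^0(X,\sF\otimes\sL^{m+1})$ are surjective for all $m\geq 0$; since $\sL$ is ample, $\sF\otimes\sL^{m}$ is globally generated for $m\gg 0$ by Serre, and the surjectivity of the iterated multiplication maps then lets one untwist: the evaluation map $H^0(X,\sF)\otimes\sL^{m}\to\sF\otimes\sL^{m}$ is surjective, hence so is $H^0(X,\sF)\otimes\sO_X\to\sF$. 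Your hyperplane-restriction computation (injectivity of $s$, the vanishing transfer to $\sF|_H$, and the surjection $H^0(X,\sF)\twoheadrightarrow H^0(H,\sF|_H)$) is exactly the input needed for that argument, so the proof can be salvaged by redirecting it toward surjectivity of multiplication maps instead of pointwise generation at a chosen $x$.
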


\begin{prop}
\label{prop:Fujita_type_global_generation}
Let $f : X \to Y$ be a projective morphism over a noetherian scheme, and $\sL$ an $f$-ample line bundle on $X$. Then for all coherent sheaves $\sF$  on $X$ flat over $Y$, there is an $M>0$, such that for every $y \in Y$ and $f$-nef line bundle $\sK$, $\sF \otimes \sL^M \otimes \sK|_{X_y}$ is globally generated.
\end{prop}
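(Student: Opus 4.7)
The plan is to combine Theorem \ref{thm:relative_Fujita} (Relative Fujita vanishing), Lemma \ref{lem:cohomology_and_base_change_pushforward_zero} and the fiber-wise Castelnuovo--Mumford regularity criterion (Theorem \ref{thm:Castelnuovo_Mumford_regularity}) to produce a single uniform bound $M$ that does not depend on $y$ or on the choice of $f$-nef $\sK$.

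First, since $\sL$ is $f$-ample, there is an integer $N_0>0$ such that for every $n \geq N_0$, the adjunction map $f^* f_* \sL^n \to \sL^n$ is surjective on all of $X$, and $\sL^n|_{X_y}$ is therefore globally generated on every fiber; it is also ample as the restriction of an $f$-ample line bundle. Next, apply Theorem \ref{thm:relative_Fujita} to the coherent sheaf $\sF$: there is an $M_0>0$ such that for all $m \geq M_0$, all $j>0$, and every $f$-nef line bundle $\sH$ on $X$ one has $R^j f_*(\sF \otimes \sL^m \otimes \sH)=0$. Since $\sF \otimes \sL^m \otimes \sH$ is flat over $Y$ (line bundles being locally free), Lemma \ref{lem:cohomology_and_base_change_pushforward_zero} then gives $H^j(X_y, (\sF \otimes \sL^m \otimes \sH)|_{X_y})=0$ for every $y \in Y$ and every such $m$, $j$, $\sH$.

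Let $d$ be the maximum of $\dim X_y$ over $y \in Y$, which is finite because $f$ is projective and $Y$ is noetherian. I would then set
$$M := M_0 + d\,N_0.$$
Fix $y \in Y$ and an $f$-nef line bundle $\sK$. On the fiber $X_y$ apply Theorem \ref{thm:Castelnuovo_Mumford_regularity} with the globally generated ample line bundle $\sL^{N_0}|_{X_y}$ and with the coherent sheaf $\sF':=(\sF \otimes \sL^M \otimes \sK)|_{X_y}$: we must check $H^i(X_y, \sF' \otimes (\sL^{N_0}|_{X_y})^{-i})=0$ for all $i>0$. For $i>d$ this is automatic by dimension reasons, and for $1 \leq i \leq d$ we have $M - i N_0 \geq M_0$ and
$$\sF' \otimes (\sL^{N_0}|_{X_y})^{-i} \;=\; \bigl(\sF \otimes \sL^{M-iN_0} \otimes \sK\bigr)\big|_{X_y},$$
whose higher cohomology vanishes by the previous paragraph applied with $\sH = \sK$. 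Therefore Castelnuovo--Mumford regularity yields global generation of $\sF'$, which is the claim.

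The only subtle point I anticipate is the uniformity of the bound: one must make sure the same $M_0$ works simultaneously for all $f$-nef $\sK$, which is precisely what the relative Fujita vanishing theorem provides, and that the globally generated ample line bundle used in the Castelnuovo--Mumford step can be chosen uniformly on all fibers, which is the reason for passing first to the power $\sL^{N_0}$ given by $f$-ampleness rather than working with $\sL$ itself.
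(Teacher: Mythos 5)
Your proof is correct and follows essentially the same route as the paper's: relative Fujita vanishing uniform in $f$-nef twists, cohomology and base change to transfer the vanishing to every fiber, and fiberwise Castelnuovo--Mumford regularity. The only (harmless) difference is bookkeeping: the paper twists by $\sA^{-n}$ for an auxiliary globally generated ample line bundle $\sA$ on $X$ and absorbs $\sA^{n-i}$ into the nef twist, whereas you use $\sL^{N_0}$ as the fiberwise polarization and enlarge $M$ to $M_0 + dN_0$, which in fact avoids assuming that $X$ itself carries an ample line bundle.
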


\begin{proof}
Let $n$ be the biggest dimension of a fiber of $f$. Pick a globally generated ample line bundle $\sA$ on $X$. Using Theorem \ref{thm:relative_Fujita}, fix a $M>0$, such that  for every $f$-nef line bundle $\sK$ on $X$, 
\begin{equation*}
R^i f_* ( \sF  \otimes \sA^{-n} \otimes \sL^M \otimes \sK)= 0. 
\end{equation*}
Then by Lemma \ref{lem:cohomology_and_base_change_product} for all $y \in Y$ and $f$-nef $\sK$, 
\begin{equation*}
 H^i( X_y, \sF  \otimes \sA^{-n} \otimes \sL^M \otimes \sK)= 0. 
\end{equation*}
In particular since $\sA^{n-i}$ is nef for every $i \leq n$: for all $i \leq n$, $y \in Y$ and $f$-nef $\sK$,
\begin{equation*}
 H^i( X_y, \sF  \otimes \sA^{-i} \otimes \sL^M \otimes \sK)= 0.  
\end{equation*}
Therefore, by Theorem \ref{thm:Castelnuovo_Mumford_regularity}, $\sF \otimes \sL^M \otimes \sK|_{X_y}$ is globally generated, which concludes our proof.
\end{proof}

\subsection{Adjunction and surjectivity}

In Proposition \ref{prop:surjectivity} another main ingredient of the article, the lifting statement, is stated for easier reference (see Section \ref{sec:idea_of_the_proof} for explanation, and Proposition \ref{prop:generically_globally_generated} for the main application).

\begin{defn}
In the situation of Notation \ref{notation:Grothendieck_trace_Frobenius}, a subvariety $Z \subseteq X$ is an $F$-pure center, if $(X, \Delta)$ is  sharply $F$-pure at the generic point of $Z$ and if for some (or equivalently all \cite[Proposition 4.1]{Schwede_F_adjunction}) $e>0$, 
\begin{equation}
\label{eq:F_pure_center:definition}
\phi^{e \cdot g}(F^{e \cdot g}_*(\sI_Z \cdot \sL_{e \cdot g,\Delta})) \subseteq \sI_Z.
\end{equation}
Furthermore, if $Z$ is the union of $F$-pure centers, then \eqref{eq:F_pure_center:definition} still holds. In both situations for any $e>0$, $\sL_{e \cdot g,\Delta} \to \sO_X$ descends then to
\begin{equation*}
 \phi^{e \cdot g}(F^{e \cdot g}_*( \sL_{{e \cdot g},\Delta}|_Z)) \subseteq \sO_Z.
\end{equation*} 
This defines a natural $\bZ_{(p)}$-Weil divisorial sheaf, which then defines a $\bQ$-Weil divisorial sheaf: the different of $\Delta$ on $Z$, denoted by $\Delta_Z$ \cite[Definition 5.1]{Schwede_A_canonical_linear_system}, \cite[Definition 4.4]{Patakfalvi_Schwede_Depth_of_F_singularities}. The only situation where $\Delta_Z$ will be used in this article is if $Z$ is an $S_2$, $G_1$ Cartier divisor and $\Delta$ is $\bQ$-Cartier at the codimension one points of $Z$ with index relatively prime to $p$. Then $\Delta_Z$ is the natural restriction $\Delta|_Z$ \cite[Lemma 4.6]{Patakfalvi_Schwede_Depth_of_F_singularities}. Furthermore, if   $(Z,\Delta_Z)$ is sharply $F$-pure, then $\Delta_Z$ is automatically  a divisor in the sense of the current article (c.f., paragraph before \cite[Lemma 4.6]{Patakfalvi_Schwede_Depth_of_F_singularities}). That is,  none of its components are contained in the singular locus of $Z$. In our situations this will always be the case. 
\end{defn}

\begin{prop} 
\label{prop:surjectivity}
In the situation of Notation \ref{notation:Grothendieck_trace_Frobenius}, if $Z \subseteq X$ is the union of $F$-pure centers of $(X, \Delta)$, and $L$ a Cartier divisor, such that $L - K_X - \Delta$ is ample, then there is a commutative diagram as follows with surjective left vertical arrow.
\begin{equation*}
\xymatrix{
S^0(X, \sigma(X,\Delta) \otimes \sO_X(L)) \ar@{^(->}[r] \ar@{->>}[d] & H^0(X, \sO_X(L)) \ar[d] \\
S^0(Z, \sigma(Z,\Delta_Z) \otimes \sO_X(L)) \ar@{^(->}[r]  & H^0(Z,  \sO_X(L)) \\ 
}
\end{equation*}
\end{prop}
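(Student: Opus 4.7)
The plan is to exhibit, for each sufficiently large $e$ divisible by $g$, a commutative square at the level of Frobenius pushforwards whose top arrow is surjective, and then pass to the stabilized images, which by Remark \ref{rem:S_0_F_pure_ideal_stabilization} are precisely the $S^0$-groups in question.

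First, I would use the $F$-compatibility hypothesis \eqref{eq:F_pure_center:definition} to produce, for every $e \geq 0$ with $g \mid e$, a short exact sequence
\begin{equation*}
0 \to F^e_*(\sI_Z \cdot \sL_{e,\Delta}) \otimes \sO_X(L) \to F^e_*\sL_{e,\Delta} \otimes \sO_X(L) \to F^e_* \sL_{e,\Delta_Z} \otimes \sO_X(L)|_Z \to 0,
\end{equation*}
where the identification of the quotient with $F^e_* \sL_{e,\Delta_Z}$ is exactly Schwede's $F$-adjunction (recall $\phi^e$ descends along $Z \hookrightarrow X$ to a map with associated $\bQ$-Weil divisor $\Delta_Z$). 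Applying the projection formula rewrites the left-hand term as $F^e_*(\sI_Z \otimes \sO_X(K_X+\Delta + p^e(L-K_X-\Delta)))$.

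Second, I would invoke Fujita vanishing (Theorem \ref{thm:relative_Fujita} applied to the structure map $X \to \Spec k$, with $\sL := \sO_X(L-K_X-\Delta)$, which is ample by hypothesis, and the coherent sheaf $\sI_Z \otimes \sO_X(K_X+\Delta)$, where one first passes to any sufficiently divisible $e_0$ with $g\mid e_0$ so that $K_X+\Delta$ is represented by a Cartier class modulo the $p^{e_0}$-th power). The affineness of $F^e$ then gives
\begin{equation*}
H^1\bigl(X,\,F^e_*(\sI_Z \cdot \sL_{e,\Delta}) \otimes \sO_X(L)\bigr)=0
\end{equation*}
for all $e\gg 0$ with $g\mid e$. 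Hence taking $H^0$ of the short exact sequence produces a surjection
\begin{equation*}
H^0\bigl(X, F^e_* \sL_{e,\Delta} \otimes \sO_X(L)\bigr) \twoheadrightarrow H^0\bigl(Z, F^e_* \sL_{e,\Delta_Z} \otimes \sO_X(L)|_Z\bigr).
\end{equation*}

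Third, I would stitch this surjection into the commutative square
\begin{equation*}
\xymatrix{
H^0(X, F^e_* \sL_{e,\Delta} \otimes \sO_X(L)) \ar@{->>}[d] \ar[r]^-{H^0(\phi^e\otimes\sO_X(L))} & H^0(X, \sO_X(L)) \ar[d] \\
H^0(Z, F^e_* \sL_{e,\Delta_Z} \otimes \sO_X(L)|_Z) \ar[r]_-{H^0(\phi^e_Z\otimes\sO_X(L)|_Z)} & H^0(Z, \sO_X(L)|_Z)
}
\end{equation*}
whose commutativity is the assertion that the Grothendieck-trace restriction $\phi^e \to \phi^e_Z$ is compatible with the surjection $\sO_X \twoheadrightarrow \sO_Z$. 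By Proposition \ref{prop:S_0_F_pure_ideal} and Remark \ref{rem:S_0_F_pure_ideal_stabilization}, once $e$ is taken large enough the horizontal images are exactly $S^0(X, \sigma(X,\Delta)\otimes\sO_X(L))$ and $S^0(Z, \sigma(Z,\Delta_Z)\otimes\sO_X(L))$, so the surjectivity of the left vertical arrow of Proposition \ref{prop:surjectivity} follows immediately from the surjectivity of the left vertical map above.

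The main obstacle is the clean identification of the quotient of the first short exact sequence as $F^e_* \sL_{e,\Delta_Z}$ together with the compatibility of the trace maps $\phi^e,\phi^e_Z$; this is precisely the content of Schwede's $F$-adjunction, and one must verify that the different $\Delta_Z$ produced by that procedure agrees with the one used in the definition of $S^0(Z, \sigma(Z,\Delta_Z)\otimes\sO_X(L))$ and is effective, which is guaranteed here by the sharp $F$-purity of $(X,\Delta)$ at the generic point of each component of $Z$.
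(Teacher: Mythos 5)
Your argument is correct, and it is essentially the same proof the paper relies on: the paper's own proof is just a citation of \cite[Proposition 5.3]{Schwede_A_canonical_linear_system} together with the remark that the argument works verbatim for $S_2$, $G_1$ schemes, and that cited argument is precisely your chain --- the short exact sequence $0 \to F^e_*(\sI_Z \cdot \sL_{e,\Delta}) \to F^e_*\sL_{e,\Delta} \to F^e_*(\sL_{e,\Delta}|_Z) \to 0$ twisted by $\sO_X(L)$, vanishing of $H^1$ of the kernel term for $e \gg 0$ divisible by $g$ coming from ampleness of $L-K_X-\Delta$, $F$-adjunction identifying the quotient with the trace data of $(Z,\Delta_Z)$, and stabilization of the horizontal images to the two $S^0$-groups. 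So you have faithfully reconstructed the cited proof rather than found a different route.
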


\begin{proof}
The statement is shown in \cite[Proposition 5.3]{Schwede_A_canonical_linear_system} for normal $X$. For $S_2$ and $G_1$ $X$, verbatim the same proof works.
\end{proof}

%

\subsection{Fujita type version for $S_0=H_0$}

This section contains Fujita type results on the equality of  $H^0$ with its subgroup $S^0$,  introduced in Definition \ref{defn:S_0_F_pure_ideal}. As the statements of Section \ref{sec:semi_positivity} need an absolute and a relative version as well, both are presented here.

\begin{notation}
\label{notation:ker_Grothendieck_trace_Frobenius}
In the situation of Notation \ref{notation:Grothendieck_trace_Frobenius}, define $\sB^e_{\Delta}:= \ker(F^{e \cdot g}_* \sL_{e \cdot g,\Delta} \to \sO_X)$ for every $e>0$. Fix also an ample line bundle $\sL$ and assume that $X$ is projective over $k$ and $(X,\Delta)$ is sharply $F$-pure. 
\end{notation}

\begin{rem}
Note that the definition $\sB^e_{\Delta}$ makes sense if we replace $\Delta$ by any $\bQ$-Weil divisor $0 \leq \Delta' \sim \Delta$, since then $\ind (K_X + \Delta') = \ind(K_X + \Delta)$. Here $\Delta \sim \Delta'$ means ordinary linear equivalence, not $\bQ$-linear equivalence. That is, it means that $\Delta - \Delta'$ is the divisor of a $f \in \sK(X)^{×}$.
\end{rem}

\begin{lem}
\label{lem:B_vanishing_cohomology_induction}
In the situation of Notation \ref{notation:ker_Grothendieck_trace_Frobenius}, if  for $M>0$ and an $e_0 >0$ 
\begin{equation*}
\sL^{M(p^g-1)} \otimes \sL_{g,\Delta} 
\end{equation*}
is nef and for every $i>0$, $0 < e \leq e_0$, $m \geq M$ and nef line bundle $\sK$
\begin{equation}
\label{eq:B_vanishing_cohomology_induction:vanishing}
H^i(X, \sB^{e}_{\Delta} \otimes \sL^m \otimes \sK) = 0,
\end{equation}
then the same vanishing holds for $e_0$ replaced by $e_0 + 1$. That is, for every $i>0$, $0 < e \leq e_0+1$, $m \geq M$ and nef line bundle $\sK$, \eqref{eq:B_vanishing_cohomology_induction:vanishing} is satisfied.
\end{lem}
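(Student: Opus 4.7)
The plan is to induct via a short exact sequence that relates $\sB^{e_0+1}_\Delta$ to $\sB^{e_0}_\Delta$ and $\sB^1_\Delta$, and then to read off the desired vanishing from the corresponding long exact sequence of cohomology. Throughout we use that sharp $F$-purity makes every trace $\phi^{eg}$ surjective, so that $\sB^e_\Delta$ sits in $0 \to \sB^e_\Delta \to F^{eg}_* \sL_{eg, \Delta} \to \sO_X \to 0$ for each $e \geq 1$.

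Applying Proposition \ref{prop:decreasing_chain_non_F_pure} with $e' = (e_0+1)g$, $e'' = g$, $f = e_0 g$, together with the identity $(1 - p^{(e_0+1)g}) = (1 - p^{e_0 g}) + p^{e_0 g}(1 - p^g)$ and the projection formula, we will factor $\phi^{(e_0+1)g}$ (up to a unit) as the composite
\[
F^{(e_0+1)g}_* \sL_{(e_0+1)g, \Delta} \;\cong\; F^g_*\bigl(F^{e_0 g}_* \sL_{e_0 g, \Delta} \otimes \sL_{g, \Delta}\bigr) \xrightarrow{F^g_*(\phi^{e_0 g} \otimes \id)} F^g_* \sL_{g, \Delta} \xrightarrow{\phi^g} \sO_X.
\]
The snake lemma, applied to the map of short exact sequences defined by this factorization (with the identity on the $\sO_X$ column), will then produce
\[
0 \to F^g_*\bigl(\sB^{e_0}_\Delta \otimes \sL_{g, \Delta}\bigr) \to \sB^{e_0+1}_\Delta \to \sB^1_\Delta \to 0.
\]
Tensoring with $\sL^m \otimes \sK$ and taking cohomology, the quotient contributes zero in positive degree by the inductive hypothesis at $e = 1$. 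For the kernel, the projection formula rewrites the sheaf inside $F^g_*$ as $\sB^{e_0}_\Delta \otimes \sL_{g, \Delta} \otimes \sL^{mp^g} \otimes \sK^{p^g}$, and since $F^g$ is affine its higher pushforwards vanish so the relevant cohomology agrees. The nefness hypothesis on $\sL_{g, \Delta} \otimes \sL^{M(p^g - 1)}$ then lets us rewrite this twist as $\sL^{m'} \otimes \sK'$ with $m' = M + (m - M)p^g \geq M$ and $\sK'$ nef, so the inductive hypothesis at $e = e_0$ kills its higher cohomology.

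The crucial technical choice is which factorization of $\phi^{(e_0+1)g}$ to use: placing the ``extra'' Frobenius step on the outside (as $\phi^g$) rather than on the inside is what turns the quotient in the snake sequence into $\sB^1_\Delta$ and the kernel into an $F^g_*$-pushforward of $\sB^{e_0}_\Delta$ twisted only by the Cartier line bundle $\sL_{g, \Delta}$; this is exactly what can be absorbed into a single application of the inductive hypothesis. Once this factorization is in place, the projection-formula arithmetic that isolates an $\sL^{m'}$ factor with $m' \geq M$ is mechanical, driven by the hypothesis on $\sL^{M(p^g - 1)} \otimes \sL_{g, \Delta}$.
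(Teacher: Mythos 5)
Your proposal is correct and follows essentially the same route as the paper: the factorization $\phi^{(e_0+1)g}=\phi^g\circ F^g_*(\phi^{e_0 g}\otimes\id_{\sL_{g,\Delta}})$ from Proposition \ref{prop:decreasing_chain_non_F_pure}, the snake-lemma sequence $0\to F^g_*(\sB^{e_0}_{\Delta}\otimes\sL_{g,\Delta})\to\sB^{e_0+1}_{\Delta}\to\sB^1_{\Delta}\to 0$, and the projection-formula twist absorbed via the nefness of $\sL^{M(p^g-1)}\otimes\sL_{g,\Delta}$. The only (immaterial) difference is bookkeeping: you pass to $\sL^{m'}$ with $m'=M+(m-M)p^g$, while the paper keeps $\sL^m$ and puts $\sL^{(m-M)(p^g-1)}\otimes\sK^{p^g}$ into the nef factor.
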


\begin{proof}
Start with the commutative diagram 
\begin{equation*}
\xymatrix{
0 & F_*^g \sL_{g,\Delta} \ar[l] \ar[d] & F^{e \cdot g}_* \sL_{e \cdot g,\Delta} \ar[l] \ar[d] \\
0 &\sO_X \ar[l] \ar[d]  &\sO_X \ar@{=}[l] \ar[d] \\
& 0 & 0
},
\end{equation*}
and extend it with $\sB^{*}_{\Delta}$ to
\begin{equation*}
\xymatrix{
& 0 \ar[d] & 0 \ar[d] \\
& \sB^1_{\Delta} \ar[d] & \sB^e_{\Delta} \ar[d] \ar[l] & \sC^e \ar[l] \ar[d] & 0 \ar[l] \\
0 & F_*^g \sL_{g,\Delta} \ar[l] \ar[d] & F^{e \cdot g}_* \sL_{e \cdot g,\Delta} \ar[l] \ar[d] & F_*^g (\sB^{e-1}_{\Delta} \otimes \sL_{g,\Delta}) \ar[l]  & 0 \ar[l] \\
0 &\sO_X \ar[l] \ar[d] &\sO_X \ar@{=}[l] \ar[d] & 0 \ar[l] \\
& 0 & 0 
},
\end{equation*}
where $\sC^e$ is defined as the kernel of $\sB^e_{\Delta} \to \sB^1_{\Delta}$. By the snake lemma $\sC^e \to F_*^g ( \sB^{e-1}_{\Delta} \otimes \sL_{g,\Delta})$ is isomorphism and $ \sB^e_{\Delta} \to \sB^1_{\Delta}$ is surjective.  Therefore, there is an exact sequence
\begin{equation*}
\xymatrix{
0 \ar[r] & F_*^g(\sB^{e-1}_{\Delta}  \otimes \sL_{g,\Delta}) \ar[r] & \sB^e_{\Delta} \ar[r] &  \sB^1_{\Delta} \ar[r] & 0  . \\
}
\end{equation*}
If $\sK$ is an arbitrary nef line bundle, then this yields another exact sequence:
\begin{equation}
\label{eq:B_vanishing_cohomology_induction}
\xymatrix{
0 \ar[r] & F_*^g(\sB^{e-1}_{\Delta} \otimes \sL_{g,\Delta} \otimes \sL^{p^g m} \otimes \sK^{p^g}  ) \ar[r]  &  \sB^e_{\Delta} \otimes \sL^m \otimes \sK \ar[r] &  \sB^1_{\Delta} \otimes \sL^m \otimes \sK \ar[r] & 0 , \\
}
\end{equation}
where 
\begin{equation*}
 F_*^g(\sB^{e-1}_{\Delta} \otimes \sL_{g,\Delta} \otimes \sL^{p^g m} \otimes \sK^{p^g}  )  \cong F_*^g(\sB^{e-1}_{\Delta} \otimes \sL^m \otimes (\sL^{M(p^g-1)} \otimes \sL_{g,\Delta}) \otimes (\sL^{(m-M)(p^g-1)} \otimes \sK^{p^g}))
\end{equation*}
Notice that by assumptions both $\sL^{M(p^g-1)} \otimes \sL_{g,\Delta}$ and  $\sL^{(m-M)(p^g-1)} \otimes \sK^{p^g}$ are nef. Therefore, applying cohomology to \eqref{eq:B_vanishing_cohomology_induction} yields the statement of the lemma.
\end{proof}

\begin{lem}
\label{lem:B_1_vanishing_uniform}
In the situation of Notation \ref{notation:ker_Grothendieck_trace_Frobenius}, there is a $M>0$, such that for every $i>0$, $m \geq M$,  nef line bundle $\sK$, and $0 \leq \Delta' \sim \Delta$ such that $(X, \Delta')$ is sharply $F$-pure, 
\begin{equation*}
 H^i (X, \sB^1_{\Delta'} \otimes \sL^m \otimes \sK ) = 0 .
\end{equation*}
\end{lem}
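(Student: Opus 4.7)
The plan is to exploit the observation that, since $\Delta' \sim \Delta$ is ordinary linear equivalence, the line bundle $\sL_{g,\Delta'}$ is isomorphic to $\sL_{g,\Delta}$, and hence the coherent sheaf $F^g_* \sL_{g,\Delta'}$ is isomorphic to the fixed, $\Delta'$-independent sheaf $F^g_* \sL_{g,\Delta}$. Under this identification, each $\sB^1_{\Delta'}$ sits as the kernel of a surjection from the common ambient sheaf $F^g_* \sL_{g,\Delta}$ onto $\sO_X$ (the map $\phi^g_{\Delta'}$ being surjective by sharp $F$-purity of $(X,\Delta')$). In particular, as $\Delta'$ varies the sheaves $\sB^1_{\Delta'}$ all have the same Hilbert polynomial.

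First I would realize the collection $\{\sB^1_{\Delta'}\}_{\Delta' \sim \Delta,\, (X,\Delta')\ \text{sharply}\ F\text{-pure}}$ as a bounded family: by Grothendieck's theorem the subsheaves of $F^g_*\sL_{g,\Delta}$ with a fixed Hilbert polynomial are parametrized by a quasi-compact open of a Quot-scheme $T$, carrying a universal kernel sheaf $\sB^1_{\mathrm{univ}}$ on $X \times T$ that specializes to $\sB^1_{\Delta'_t}$ at $t \in T$. Then I would apply Relative Fujita Vanishing (Theorem \ref{thm:relative_Fujita}) to the projection $\pi \colon X \times T \to T$ with the $\pi$-ample line bundle $\sL \boxtimes \sO_T$ and the coherent sheaf $\sB^1_{\mathrm{univ}}$: there exists a uniform $M > 0$ such that for all $m \geq M$, all nef line bundles $\sK$ on $X$ (which remain $\pi$-nef after pullback), and all $i > 0$,
\[
R^i \pi_* \bigl(\sB^1_{\mathrm{univ}} \otimes \sL^m \otimes \sK\bigr) = 0.
\]
By cohomology and base change (Lemma \ref{lem:cohomology_and_base_change_pushforward_zero}), this translates fiber-wise into $H^i(X, \sB^1_{\Delta'_t} \otimes \sL^m \otimes \sK) = 0$ for every $t \in T$, giving exactly the uniform statement of the lemma.

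The main obstacle is organizing the parameter scheme $T$ and the universal kernel $\sB^1_{\mathrm{univ}}$ cleanly: one has to keep track of the fixed fractional part $\{\Delta\}$ of $\Delta'$, verify that the locus of sharply $F$-pure $\Delta'$ can be covered by a noetherian subscheme, and check that the universal Grothendieck trace produces a coherent kernel that is flat in families (at least after passing to a suitable open).

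A hands-on alternative, avoiding any universal construction, is to apply Relative Fujita directly to the two $\Delta'$-independent sheaves $F^g_*\sL_{g,\Delta}$ and $\sO_X$: the long exact sequence associated with $0 \to \sB^1_{\Delta'} \to F^g_*\sL_{g,\Delta} \to \sO_X \to 0$ then immediately yields the desired vanishing for $i \geq 2$. For the remaining case $i = 1$, one reduces to showing surjectivity of $H^0(F^g_*\sL_{g,\Delta'} \otimes \sL^m \otimes \sK) \to H^0(\sL^m \otimes \sK)$; this can be obtained via iterated Frobenius using that, by sharp $F$-purity and the factorization of Proposition \ref{prop:decreasing_chain_non_F_pure}, $\phi^{eg}_{\Delta'}$ factors through $\phi^g_{\Delta'}$, combined with the identity
\[
\sL_{eg,\Delta'} \otimes (\sL^m \otimes \sK)^{p^{eg}} \cong \sO_X(K_X + \Delta) \otimes \bigl(\sL^m \otimes \sK \otimes \sO_X(-K_X - \Delta)\bigr)^{p^{eg}}
\]
and a Fujita bound for the $\Delta'$-independent sheaf $\sO_X(K_X+\Delta)$, applied after choosing $\ell_0$ so that $\sL^{\ell_0} \otimes \sO_X(-K_X - \Delta)$ is nef and enlarging $M$ accordingly.
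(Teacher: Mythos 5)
Your main argument is correct and is essentially the paper's own proof: both rest on exhibiting the sheaves $\sB^1_{\Delta'}$ as fibers of a single coherent, flat universal kernel over a projective Quot-type parameter space and then applying relative Fujita vanishing (Theorem \ref{thm:relative_Fujita}) plus cohomology and base change (Lemma \ref{lem:cohomology_and_base_change_pushforward_zero}). The only difference is the bounding device: you use that $F^g_*\sL_{g,\Delta'}\cong F^g_*\sL_{g,\Delta}$ and that sharp $F$-purity makes $\phi^g_{\Delta'}$ surjective, so each $\sB^1_{\Delta'}$ is the kernel of a point of $\Quot_{F^g_*\sL_{g,\Delta}}$ with quotient $\sO_X$ of fixed Hilbert polynomial, whereas the paper runs a snake-lemma diagram to realize $\sB^1_{\Delta'}$ as a subsheaf of the boundary-free kernel $\sB^1$ with quotient $\sE_{\Delta'}$ of $\Delta'$-independent Hilbert polynomial and works in $\Quot_{\sB^1}$; your version is, if anything, a touch more direct, and the ``obstacles'' you worry about evaporate: there is no need to parametrize the sharply $F$-pure $\Delta'$ themselves (it is harmless that most points of the Quot scheme correspond to no $\Delta'$), and flatness of the universal kernel follows from flatness of the universal quotient and of the pulled-back ambient sheaf. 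The ``hands-on alternative'', however, does not work as sketched at $i=1$: the displayed identity presupposes that $K_X+\Delta$ is an integral Cartier divisor, whereas only $r(K_X+\Delta)$ is Cartier with $r$ possibly greater than $1$, and, more seriously, surjectivity of $H^0$ of the trace map is governed precisely by $H^1$ of the kernels $\sB^e_{\Delta'}$, so a Fujita bound for the $\Delta'$-independent sheaf $\sO_X(K_X+\Delta)$ cannot produce it by itself; making that step work is exactly the content of Lemma \ref{lem:B_vanishing_cohomology_induction} and Corollary \ref{cor:Fujita_type_S_0_equals_H_0}, which sit logically downstream of the present lemma, so that route is circular. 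Since the alternative is offered only as an aside, this does not affect the validity of your primary proof.
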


\begin{proof}
Consider the following commutative diagram, where $\sC_{\Delta'}$, $\sD_{\Delta'}$ and $\sE_{\Delta'}$ are defined as the cokernels and kernels of the adequate vertical maps.
\begin{equation}
\label{eq:B_1_vanishing_uniform:commutative diagram}
\xymatrix{
& 0 & 0 \\
& \sC_{\Delta'} \ar[r] \ar[u] & \sE_{\Delta'} \ar[u] \\
0 \ar[r] & \sB^1 \ar[r] \ar[u] & F^g_* \sO_X((1 - p^g) K_X) \ar[r] \ar[u] & \sO_X \ar[r] \ar@{=}[d] & 0 \\
0 \ar[r] & \sB^1_{\Delta'} \ar[r] \ar[u] & F^g_* \sO_X((1 - p^g)( K_X + \Delta')) \ar[r] \ar[u] & \sO_X \ar[r] & 0 \\
& \sD_{\Delta'}  \ar[u] & 0 \ar[u] \\
& 0 \ar[u]
}
\end{equation}
By the Snake-lemma, $\sC_{\Delta'} \cong \sE_{\Delta'}$ and $\sD_{\Delta'}=0$. Hence, for every $0 \leq \Delta' \sim \Delta$ such that $(X, \Delta')$ is sharply $F$-pure, there is an exact sequence
\begin{equation}
\label{eq:B_1_vanishing_uniform:exact}
\xymatrix{
0 \ar[r] & \sB^1_{\Delta'} \ar[r] & \sB^1 \ar[r] & \sE_{\Delta'} \ar[r] & 0 .
}
\end{equation}
Since, $F^g_* \sL_{g, \Delta'}=F^g_* \sO_X((1-p^g)(K_X + \Delta'))$ is isomorphic for every $0 \leq \Delta' \sim \Delta$, by the middle column of \eqref{eq:B_1_vanishing_uniform:commutative diagram}, $\chi(X,\sE_{\Delta'} \otimes \sL^n)$ is independent of $\Delta'$. However, then by \eqref{eq:B_1_vanishing_uniform:exact}, every possible $\sE_{\Delta'}$ is contained  in finitely many components of the quot-scheme $\Quot_{\sB^1}$. Let $\fM$ be the union of these components. There is a universal family 
\begin{equation*}
\xymatrix{
0 \ar[r] & \sF \ar[r] & p^* \sB^1 \ar[r] & \sG \ar[r] & 0
}
\end{equation*}
over $\fM ×X$, where $p : \fM ×X \to X$ is the natural projection. By the choice of $\fM$, for every $\Delta'$ there is a $y \in \fM$, such that $\sB^1 \to \sG_y$ is isomorphic to the surjection $\sB^1 \to \sE_{\Delta'}$ with identity at $\sB^1$. Therefore, it is enough to prove the statement of the lemma for all $\sF_y$ instead of all $\sB_{\Delta'}^1$. However, that is just
Theorem \ref{thm:relative_Fujita} and Lemma \ref{lem:cohomology_and_base_change_pushforward_zero} applied to the morphism $\fM ×X \to X$ and $\sF$. 
\end{proof}

\begin{prop}
\label{prop:uniform_vanishing_for_B_e}
In the situation of Notation \ref{notation:ker_Grothendieck_trace_Frobenius}, there is an $M>0$  such that for every $i>0$, $e>0$, $m \geq M$, nef line bundle $\sK$ and $0 \leq \Delta' \sim \Delta$ such that $(X, \Delta')$ is sharply $F$-pure,
\begin{equation*}
H^i(X, \sB^e_{\Delta'} \otimes \sL^m \otimes \sK) = 0.
\end{equation*}
\end{prop}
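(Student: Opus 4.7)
The plan is to combine Lemmas \ref{lem:B_1_vanishing_uniform} and \ref{lem:B_vanishing_cohomology_induction} by induction on $e$, making sure that the same threshold $M$ works uniformly over all admissible $\Delta'$.

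First, observe that because $\Delta' \sim \Delta$ (ordinary linear equivalence), the line bundle $\sL_{g,\Delta'} = \sO_X((1-p^g)(K_X+\Delta'))$ is canonically isomorphic to $\sL_{g,\Delta}$; in particular the hypothesis on nefness of $\sL^{M(p^g-1)} \otimes \sL_{g,\Delta}$ in Lemma \ref{lem:B_vanishing_cohomology_induction} depends only on the linear equivalence class, not on the representative $\Delta'$. This is what allows a single $M$ to work uniformly in $\Delta'$.

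The proof then proceeds as follows. Apply Lemma \ref{lem:B_1_vanishing_uniform} to obtain an integer $M_0 > 0$ such that for every $i>0$, $m \geq M_0$, nef line bundle $\sK$ and every $0 \leq \Delta' \sim \Delta$ with $(X,\Delta')$ sharply $F$-pure,
\begin{equation*}
H^i(X, \sB^1_{\Delta'} \otimes \sL^m \otimes \sK) = 0.
\end{equation*}
Since $\sL$ is ample, choose $M_1 > 0$ large enough that $\sL^{M_1(p^g-1)} \otimes \sL_{g,\Delta}$ is nef, and set $M := \max(M_0, M_1)$.

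Now fix an arbitrary $0 \leq \Delta' \sim \Delta$ with $(X,\Delta')$ sharply $F$-pure. We claim that the vanishing $H^i(X, \sB^e_{\Delta'} \otimes \sL^m \otimes \sK) = 0$ holds for every $i>0$, every $e>0$, every $m \geq M$ and every nef line bundle $\sK$. We argue by induction on $e_0$, showing that the vanishing holds for all $0 < e \leq e_0$. The base case $e_0 = 1$ is exactly the conclusion of Step 1 (recall $M \geq M_0$). For the inductive step, assume the vanishing for all $0 < e \leq e_0$. The hypotheses of Lemma \ref{lem:B_vanishing_cohomology_induction} (applied with $\Delta$ replaced by $\Delta'$) are satisfied: nefness of $\sL^{M(p^g-1)} \otimes \sL_{g,\Delta'} \cong \sL^{M(p^g-1)} \otimes \sL_{g,\Delta}$ follows from $M \geq M_1$, and the cohomology vanishing for $0 < e \leq e_0$ is the inductive hypothesis. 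Hence Lemma \ref{lem:B_vanishing_cohomology_induction} yields the vanishing for all $0 < e \leq e_0 + 1$, completing the induction and the proof.

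The only mildly delicate point is the uniformity in $\Delta'$: it is ensured by the fact that Lemma \ref{lem:B_1_vanishing_uniform} already produces a uniform $M_0$, and that the nefness condition feeding the induction depends only on the linear equivalence class of $\Delta'$.
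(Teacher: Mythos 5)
Your proof is correct and follows essentially the same route as the paper: a uniform $M$ for the $e=1$ case via Lemma \ref{lem:B_1_vanishing_uniform}, combined with the nefness of $\sL^{M(p^g-1)}\otimes\sL_{g,\Delta'}\cong\sL^{M(p^g-1)}\otimes\sL_{g,\Delta}$ (which depends only on the linear equivalence class), feeding the induction on $e$ via Lemma \ref{lem:B_vanishing_cohomology_induction} with $\Delta$ replaced by $\Delta'$. Your write-up just makes the choice of $M=\max(M_0,M_1)$ and the uniformity in $\Delta'$ slightly more explicit than the paper does.
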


\begin{proof}
First, since $\Delta' \sim \Delta$, $\sL_{g,\Delta} \cong  \sL_{g,\Delta'}$. In particular,  there is an $M>0$ such that 
\begin{equation*}
\sL^{M(p^g-1)} \otimes \sL_{g,\Delta'} 
\end{equation*}
is nef for all $\Delta'$ as above. Second by Lemma \ref{lem:B_1_vanishing_uniform} after possibly increasing $M$, for every $i>0$, $m \geq M$, nef line bundle $\sK$ and $0 \leq \Delta' \sim \Delta$ such that $(X, \Delta')$ is sharply $F$-pure,
\begin{equation*}
H^i(X, \sB^1_{\Delta'} \otimes \sL^m \otimes \sK) = 0.
\end{equation*}  
Hence the assumptions of Lemma \ref{lem:B_vanishing_cohomology_induction} are satisfied with $e_0=1$ and $\Delta$ replaced by $\Delta'$. Applying Lemma \ref{lem:B_vanishing_cohomology_induction} inductively for all $\Delta'$ at once concludes our proof.
\end{proof}

\begin{cor}
\label{cor:Fujita_type_S_0_equals_H_0}
In the situation of Notation \ref{notation:ker_Grothendieck_trace_Frobenius}, there is an $M>0$  such that for every $m \geq M$, nef line bundle $\sK$ and $0 \leq \Delta' \sim \Delta$ such that $(X, \Delta')$ is sharply $F$-pure,
\begin{equation}
\label{eq:Fujita_type_S_0_equals_H_0}
S^0(X, \sigma(X,\Delta') \otimes \sL^m \otimes \sK) = H^0(X,  \sL^m \otimes \sK).
\end{equation}
\end{cor}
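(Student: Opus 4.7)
The plan is to reduce the equality to a cohomological vanishing statement already established in Proposition \ref{prop:uniform_vanishing_for_B_e}. Using the alternate description of $S^0$ provided by Proposition \ref{prop:S_0_F_pure_ideal}, with the ``line bundle'' there taken to be $\sL^m \otimes \sK$, we have
\[
S^0(X, \sigma(X,\Delta') \otimes \sL^m \otimes \sK) \;=\; \bigcap_{e > 0} \im\!\left( H^0(X, F^{e \cdot g}_* \sL_{e \cdot g, \Delta'} \otimes \sL^m \otimes \sK) \xrightarrow{\,H^0(\phi^{e \cdot g} \otimes \id)\,} H^0(X, \sL^m \otimes \sK) \right).
\]
It therefore suffices to show that for every single $e>0$ and every admissible $\Delta'$, $\sK$, $m$, the map $H^0(\phi^{e \cdot g} \otimes \id)$ is already surjective.

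Next, since $(X, \Delta')$ is sharply $F$-pure, the map $\phi^{e \cdot g} : F^{e \cdot g}_* \sL_{e \cdot g, \Delta'} \to \sO_X$ is surjective, and its kernel is precisely $\sB^{e}_{\Delta'}$ of Notation \ref{notation:ker_Grothendieck_trace_Frobenius}. Tensoring the resulting short exact sequence with the locally free sheaf $\sL^m \otimes \sK$ yields
\[
0 \to \sB^{e}_{\Delta'} \otimes \sL^m \otimes \sK \to F^{e \cdot g}_* \sL_{e \cdot g, \Delta'} \otimes \sL^m \otimes \sK \to \sL^m \otimes \sK \to 0,
\]
so the surjectivity in question follows from the vanishing of $H^1(X, \sB^{e}_{\Delta'} \otimes \sL^m \otimes \sK)$.

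The final step is to invoke Proposition \ref{prop:uniform_vanishing_for_B_e}, which provides a uniform $M>0$ such that $H^1(X, \sB^e_{\Delta'} \otimes \sL^m \otimes \sK)=0$ for all $e>0$, all $m \geq M$, all nef $\sK$, and all $0 \leq \Delta' \sim \Delta$ with $(X,\Delta')$ sharply $F$-pure. Using this $M$, each image in the intersection above equals $H^0(X, \sL^m \otimes \sK)$, and hence so does $S^0(X, \sigma(X,\Delta') \otimes \sL^m \otimes \sK)$. There is essentially no hidden difficulty here: all the uniformity in $e$, in $\Delta'$ within a fixed linear equivalence class, and in nef twists has already been built into Proposition \ref{prop:uniform_vanishing_for_B_e}; the corollary is just the packaging of that vanishing via the long exact sequence and the description of $S^0$ from Proposition \ref{prop:S_0_F_pure_ideal}.
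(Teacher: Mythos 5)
Your proof is correct and follows essentially the same route as the paper: the paper likewise twists the short exact sequence $0 \to \sB^e_{\Delta'} \to F^{e\cdot g}_*\sL_{e\cdot g,\Delta'} \to \sO_X \to 0$ by $\sL^m\otimes\sK$ and uses the uniform vanishing of Proposition \ref{prop:uniform_vanishing_for_B_e} to obtain surjectivity on $H^0$, concluding via the description of $S^0$ from Proposition \ref{prop:S_0_F_pure_ideal}. Your write-up merely makes explicit the appeal to sharp $F$-purity for the surjectivity of $\phi^{e\cdot g}$ and to Proposition \ref{prop:S_0_F_pure_ideal}, which the paper leaves implicit.
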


\begin{proof}
Choose $M$ to be the $M$ obtained in Proposition \ref{prop:uniform_vanishing_for_B_e}. Consider the exact sequence
\begin{equation*}
\xymatrix{
0 \ar[r] & \sB^e_{\Delta'} \ar[r] & F^{e \cdot g}_* \sL_{e \cdot g,\Delta'} \ar[r] & \sO_X \ar[r] & 0,
}
\end{equation*}
twist it by $\sL^m \otimes \sK$ and apply $H^i(\_)$ to it. Since for all $m>M$ and $e>0$, $H^1(X, \sB^e_{\Delta'} \otimes \sL^m \otimes \sK) = 0$ by the choice of $M$, we obtain a surjection
\begin{equation*}
H^0(X,F^{e \cdot g}_* \sL_{e \cdot g,\Delta'} \otimes \sL^m \otimes \sK) \twoheadrightarrow H^0(X,\sL^m \otimes \sK).
\end{equation*}
In particular this implies \eqref{eq:Fujita_type_S_0_equals_H_0}.
\end{proof}

\begin{lem}
Let $(X, \Delta)$ be a pair with  $K_X + \Delta$ $\bQ$-Cartier,  $p \nmid \ind(K_X + \Delta)$ and let $f : X \to Y$ be a flat, projective, equidimensional, relative $S_2$ and $G_1$ morphism to a smooth, projective variety of dimension $n$. Assume also that $\Delta$ avoids all the codimension 0 and the singular codimension 1 points of the fibers and that $(X_y, \Delta_y)$ is a sharply $F$-pure pair for all $y \in Y$. Let $A_i \ (i=1, \dots,n)$ be pullbacks of generic hyperplane sections of $Y$. Then $(X, \Delta + \sum_{i=1}^n A_i)$ is sharply $F$-pure as well. 
\end{lem}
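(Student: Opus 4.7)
The plan is to verify sharp $F$-purity of $(X, \Delta + \sum_{i=1}^n A_i)$ pointwise, by iterated inversion of adjunction along the Cartier divisors $A_i$ followed by an appeal to fiberwise sharp $F$-purity.

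First I would set things up using genericity. Since $Y$ is smooth and the $H_i$ with $A_i = f^* H_i$ may be taken as generic hyperplane sections, I may assume that $\sum H_i$ is a simple normal crossings divisor on $Y$; that for every $y \in Y$ the subset $\{H_i : y \in H_i\}$ is part of a regular system of parameters in $\sO_{Y,y}$; and, using flatness of $f$ together with the hypothesis that $\Delta$ avoids the codimension $0$ and singular codimension $1$ points of the fibers, that the $A_i$ share no component with $\Delta$ nor with the singular codimension $1$ locus of any fiber. Because each $A_i$ is Cartier, $K_X + \Delta + \sum A_i$ is $\bQ$-Cartier with the same index as $K_X + \Delta$, still coprime to $p$, so the theory of sharp $F$-purity applies.

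Next, fix a closed point $x \in X$, set $y := f(x)$ and $I := \{i : y \in H_i\}$. In an open neighborhood of $x$ the pair in question is $(X, \Delta + \sum_{i \in I}A_i)$, since the other $A_j$ do not meet $x$. I then apply inversion of adjunction for sharp $F$-purity along the $A_i$, $i \in I$, one at a time: each $A_i$ is Cartier, so by \cite[Lemma 4.6]{Patakfalvi_Schwede_Depth_of_F_singularities} the different of the restriction at each stage is simply the restriction of the boundary, and the index hypothesis is preserved. After $|I|$ iterations this reduces sharp $F$-purity of $(X, \Delta + \sum_{i \in I}A_i)$ at $x$ to sharp $F$-purity of $(f^{-1}(Z_I), \Delta|_{f^{-1}(Z_I)})$ at $x$, where $Z_I := \bigcap_{i \in I}H_i \subset Y$ is smooth and contains $y$, and $f^{-1}(Z_I) \to Z_I$ is flat with sharply $F$-pure fibers $(X_{y'}, \Delta_{y'})$.

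The main obstacle is the last step, namely propagating sharp $F$-purity from the fibers to the total space of the flat family $f^{-1}(Z_I) \to Z_I$. One option is to invoke a deformation-of-$F$-purity statement from the existing literature. A more self-contained route, which I would pursue, is downward induction on $\dim Z_I$: choose additional generic hyperplane sections of $Z_I$ through $y$, pull them back to Cartier divisors on $f^{-1}(Z_I)$, and keep iterating inversion of adjunction with these new Cartier divisors. The induction terminates when the base has been cut down to the single point $y$, at which stage the conclusion is precisely fiberwise sharp $F$-purity of $(X_y, \Delta_y)$, which is given. Running this pointwise at every closed $x \in X$ yields the desired sharp $F$-purity of $(X, \Delta + \sum_{i=1}^n A_i)$.
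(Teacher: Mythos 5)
Your proposal is correct and is essentially the paper's own argument: after localizing at a point of $X$ lying over $y \in Y$, you supplement the $A_i$ passing through $y$ with pullbacks of auxiliary generic divisors through $y$ that cut the base down to $y$, and you pass between the sharply $F$-pure fiber $(X_y, \Delta_y)$ and the total space by iterated $F$-inversion of adjunction along these Cartier pullbacks, with the differents computed as honest restrictions exactly as in the paper's proof (which invokes Observation 4.5 and Lemma 4.6 of Patakfalvi--Schwede). The only cosmetic difference is that you organize the induction as a downward reduction and discard the auxiliary divisors from the boundary at the end (which uses the standard monotonicity of sharp $F$-purity under decreasing the boundary), while the paper keeps all the divisors in the boundary and runs the same induction upward from the fiber.
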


\begin{proof}
The question is local, so we may fix $y \in Y$, and then it is enough to prove the statement of the lemma in any open neighborhood of  $X_y$. By reindexing the $A_i$, we may also assume, that $X_y \subseteq A_i$ exactly if $1 \leq i \leq q$. Set then $C_i := A_i$ for $i \leq q$. Furthermore choose, after possibly restricting $Y$, Cartier divisors $C_{q+1}, \dots, C_n$ on $X$ that are pullbacks of smooth divisors on $Y$, such that the equations of $C_1,\dots,C_n$ form a basis of $\factor{m_{Y,y}}{m_{Y,y}^2}$. Applying $F$-inversion of adjunction \cite[Observation 4.5]{Patakfalvi_Schwede_Depth_of_F_singularities} repeatedly and possibly restricting $Y$ along this process, implies that 
\begin{equation*}
\left( \bigcap_{i=1}^j C_i, \left. \Delta + \sum_{l=j+1}^n C_l \right|_{\bigcap_{i=1}^j C_i}  \right)
\end{equation*}
is sharply $F$-pure, where the empty intersection  is defined to be $X$ itself. In particular, for $j=0$ we obtain the statement of the lemma.
\end{proof}

\begin{prop}
\label{prop:H_0_equals_S_0_open_condition}
Let $(X, \Delta)$ be a pair with $K_X + \Delta$ $\bQ$-Cartier,  $p \nmid \ind(K_X + \Delta)$   and let $f : X \to Y$ a flat, projective, equidimensional, relative $S_2$ and $G_1$ morphism to a smooth, projective variety of dimension $n$. Assume also that $\Delta$ avoids all the codimension 0 and the singular codimension 1 points of the fibers and that $(X_y, \Delta_y)$ is a sharply $F$-pure pair for all $y \in Y$. If $\sL$ is  an ample line bundle, then there is an $M>0$ such that for all  $m \geq M$, nef line bundle $\sK$ and generic $y \in Y$,
\begin{equation*}
 H^0(X_y,  \sL^m \otimes \sK ) = S^0(X_y, \sigma(X_y,\Delta_y) \otimes ( \sL^m \otimes \sK)_y )  .
\end{equation*}
\end{prop}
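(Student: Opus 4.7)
\emph{Plan.} The strategy is to exhibit $X_y$ as a union of $F$-pure centers of a sharply $F$-pure ambient pair on $X$, pull the identity $S^0 = H^0$ on $X$ (from Corollary \ref{cor:Fujita_type_S_0_equals_H_0}) down to $X_y$ via Proposition \ref{prop:surjectivity}, and then use Fujita vanishing to guarantee that the restriction map on global sections is surjective.

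First I would fix a very ample line bundle on $Y$ and let $A_1, \dots, A_n$ be $n$ sufficiently generic hyperplane sections. Setting $C_i := f^* A_i$, one has $\bigcap_i A_i = \{y_0\}$ for $y_0$ in a dense open of $Y$ and $\bigcap_i C_i = X_{y_0}$. By the immediately preceding lemma, $(X, \Delta + \sum_i C_i)$ is sharply $F$-pure. Running the $F$-inversion of adjunction argument from that lemma's proof divisor by divisor, I would moreover identify $X_{y_0}$ as a union of $F$-pure centers of $(X, \Delta + \sum_i C_i)$, with different exactly $\Delta_{y_0}$ (the hypothesis that $\Delta$ avoids the codimension $0$ and singular codimension $1$ points of every fiber ensures that the restriction at each adjunction step is a Weil divisor in the sense of Section \ref{sec:notation}).

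Next, apply Corollary \ref{cor:Fujita_type_S_0_equals_H_0} to the sharply $F$-pure pair $(X, \Delta + \sum_i C_i)$ and the ample line bundle $\sL$: there is some $M_1 > 0$ such that for every $m \geq M_1$ and every nef $\sK$,
\[
S^0\bigl(X,\,\sigma(X,\Delta + \textstyle\sum_i C_i) \otimes \sL^m \otimes \sK\bigr) \;=\; H^0(X, \sL^m \otimes \sK).
\]
For $m$ large enough that, for any Cartier $L$ representing $\sL^m \otimes \sK$, the $\bQ$-divisor $L - K_X - \Delta - \sum_i C_i$ is ample (which is uniform in nef $\sK$), Proposition \ref{prop:surjectivity} applied to $Z = X_{y_0}$ produces a commutative diagram
\[
\xymatrix@C=30pt{
H^0(X, \sL^m \otimes \sK) \ar@{->>}[d] \ar[r]^-{\mathrm{rest.}} & H^0(X_{y_0}, (\sL^m \otimes \sK)|_{X_{y_0}}) \\
S^0\bigl(X_{y_0}, \sigma(X_{y_0}, \Delta_{y_0}) \otimes (\sL^m \otimes \sK)|_{X_{y_0}}\bigr) \ar@{^(->}[ur] &
}
\]
reducing the whole statement to surjectivity of the top restriction map, uniformly in $m$ and nef $\sK$.

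For that last ingredient, filter the embedding $X_{y_0} \subset X$ through $Z_0 := X$, $Z_i := Z_{i-1} \cap C_i$ and apply absolute Fujita vanishing (Theorem \ref{thm:relative_Fujita} with base $\Spec k$) on each $Z_{i-1}$: since $\sL|_{Z_{i-1}}$ is ample and $\sK|_{Z_{i-1}}$ is nef, one obtains a uniform $M_2$ with $H^1(Z_{i-1}, (\sL^m \otimes \sK)|_{Z_{i-1}}(-C_i)) = 0$ for every $i$, every $m \geq M_2$ and every nef $\sK$, hence each $H^0(Z_{i-1}, \cdot) \twoheadrightarrow H^0(Z_i, \cdot)$ is surjective. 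Taking $M$ to exceed $M_1$, $M_2$ and the ampleness threshold for Proposition \ref{prop:surjectivity}, a diagram chase gives $H^0(X_{y_0}, (\sL^m \otimes \sK)|_{X_{y_0}}) \subseteq S^0(X_{y_0}, \sigma(X_{y_0},\Delta_{y_0}) \otimes (\sL^m \otimes \sK)|_{X_{y_0}})$, which, combined with the reverse inclusion that is built into the definition of $S^0$, yields the desired equality.

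The main obstacle I anticipate is not the Fujita-vanishing or the surjection from Proposition \ref{prop:surjectivity}, but rather the bookkeeping in Step 1 which identifies $\bigcap_i C_i = X_{y_0}$ as a union of $F$-pure centers of $(X, \Delta + \sum_i C_i)$ with different \emph{exactly} $\Delta_{y_0}$. This requires iterating $F$-adjunction through all $n$ Cartier divisors $C_i$ simultaneously with the argument of the preceding lemma, and checking that at each intermediate intersection the restricted divisor is still of the type allowed by Section \ref{sec:notation} (no components in the singular locus). Everything else is a fairly mechanical combination of Corollary \ref{cor:Fujita_type_S_0_equals_H_0}, Proposition \ref{prop:surjectivity} and Fujita vanishing.
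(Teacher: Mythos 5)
Your steps 1--3 follow the same skeleton as the paper (pull back generic hyperplanes, apply Corollary \ref{cor:Fujita_type_S_0_equals_H_0} to the ambient pair $(X,\Delta+\sum_i C_i)$, descend with Proposition \ref{prop:surjectivity}), and your diagram chase at the fixed fiber is sound; but there is a genuine gap in how you conclude. The proposition asserts the equality for \emph{generic} $y$, i.e.\ for all $y$ in a dense open subset of $Y$, with a single $M$ working uniformly, and this uniformity is exactly what the next result (Proposition \ref{prop:H_0_equals_S_0_relative}, proved by Noetherian induction) consumes. Your argument only produces the equality at the finitely many points of $\bigcap_i A_i$ for one fixed choice of hyperplanes (incidentally $\bigcap_i A_i$ consists of $\deg Y$ points, not just $\{y_0\}$, though that is harmless). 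To reach a dense open set you must let the $A_i$ move, and then every constant has to be uniform in that motion. For your $M_1$ and the ampleness threshold this is fine, because Corollary \ref{cor:Fujita_type_S_0_equals_H_0} is deliberately stated uniformly over all sharply $F$-pure $0\le\Delta'\sim\Delta+\sum_i C_i$ and ampleness only depends on the linear equivalence class. But your $M_2$ comes from Fujita vanishing applied to the particular slices $Z_{i-1}=f^{-1}(A_1\cap\dots\cap A_{i-1})$ and the sheaves $\sO_{Z_{i-1}}(-C_i)$, which change with the hyperplanes; nothing in your argument makes $M_2$ independent of that choice, so the quantifier ``for generic $y$'' is never reached. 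This is precisely where the paper argues differently: it chooses $M$ so that Corollary \ref{cor:Fujita_type_S_0_equals_H_0} holds \emph{also on the fixed fiber} $(X_{y_0},\Delta_{y_0})$ with $\sL_{y_0}$; then in the commutative square the bottom inclusion is an equality at $y_0$, which forces the restriction $H^0(X,\sL^m\otimes\sK)\to H^0(X_{y_0},\cdot)$ to be surjective, cohomology-and-base-change semicontinuity spreads that surjectivity to all $y$ near $y_0$, and re-running the diagram with moving hyperplanes through such $y$ gives the equality on a dense open set. (The paper also sidesteps your anticipated bookkeeping obstacle by composing Proposition \ref{prop:surjectivity} one Cartier divisor at a time rather than exhibiting $X_{y_0}$ directly as a union of $F$-pure centers of the big pair.)

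The gap is repairable within your framework: either adopt the paper's fiberwise-Corollary-plus-semicontinuity step, or replace your step 4 by a restriction-surjectivity statement valid for every $y$ at once with constants depending only on fixed data --- for instance, kill $R^{i}f_*(\sL^m\otimes\sK)$ for $i>0$ by relative Fujita vanishing (Theorem \ref{thm:relative_Fujita}) and make $f_*(\sL^m\otimes\sK)$ globally generated via Castelnuovo--Mumford regularity on $Y$, using the vanishing of $H^i(X, f^*\sH^{-i}\otimes\sL^m\otimes\sK)$ for the finitely many fixed sheaves $f^*\sH^{-i}$ ($\sH$ ample and globally generated on $Y$), again uniform in nef $\sK$ by Fujita vanishing. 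That yields surjectivity of $H^0(X,\sL^m\otimes\sK)\to H^0(X_y,(\sL^m\otimes\sK)_y)$ for \emph{all} $y$ with an $M$ independent of the auxiliary hyperplanes, after which your steps 1--3, made uniform via the $\Delta'\sim\Delta$ clause of Corollary \ref{cor:Fujita_type_S_0_equals_H_0}, give the claimed equality for generic $y$.
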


\begin{proof}
Let $A_i \ (i=1, \dots,n)$ and $A_{i,0} \ (i=1, \dots,n)$ be pullbacks of generic hyperplane sections of $Y$. We think about $A_i$ as moving and $A_{i,0}$ as being fixed. Fix $y_0 \in \bigcap_{i=1}^n A_{i,0}$. Choose then $M>0$, such that the statement of Corollary \ref{cor:Fujita_type_S_0_equals_H_0} holds for both $\left( X, \Delta + \sum_{i=1}^n A_i \right)$, $\sL$ and $(X_{y_0}, \Delta_{y_0})$, $\sL_{y_0}$. Let $L$ be a divisor of $\sL$. By possibly increasing $M$, we may also assume that $M L - (K_X + \Delta + \sum_{i=1}^n A_i )$ is ample. 

Consider the following commutative diagram for a $y \in \bigcap_{i=1}^n A_i$, $m \geq M$ and a nef line bundle $\sK$ on $X$. Here the left vertical arrow is the homomorphism  of Proposition \ref{prop:surjectivity} composed $n$-times.
\begin{equation}
\label{eq:H_0_equals_S_0_open_condition:commutative}
\xymatrix{
 S^0(X, \sigma(X, \Delta + \sum_{i=1}^n A_i )  \otimes  \sL^m \otimes \sK ) \ar@{^(->}[r] \ar@{->}[d] & H^0(X,  \sL^m \otimes \sK) \ar@{->}[d] \\
S^0(X_y , \sigma(X_y, \Delta_y) \otimes (\sL^m \otimes \sK)_y ) \ar@{^(->}[r] & H^0(X_y,  \sL^m \otimes \sK) 
}
\end{equation}
By applying Proposition \ref{prop:surjectivity} $n$-times, one obtains that the left vertical arrow is surjective. The top horizontal arrow is isomorphism by the choice of $M$.

Consider now the particular case of \eqref{eq:H_0_equals_S_0_open_condition:commutative}, when $y=y_0$ and $A_i = A_{i,0}$. Then by the choice of $M$, the bottom horizontal arrow is isomorphism as well. Hence, the right vertical arrow has to be surjective as well. However, then it is surjective for every $y$ in a neighborhood of $y_0$ \cite[Theorem III.12.11.a]{Hartshorne_Algebraic_geometry}. But then using the surjectivity of the left and top arrows for generic $A_i$ and $y \in \bigcap_{i=1}^n A_i$ one obtains that the bottom arrow is isomorphism for generic $y \in Y$. 
\end{proof}

\begin{cor}
\label{cor:H_0_equals_S_0_open_condition}
In the situation of Notation \ref{notation:results}, if $\sL$ is an $f$-ample line bundle on $X$, $Y$ is projective and the pair $(X_y, \Delta_y)$ is sharply $F$-pure for all $y \in Y$, then there is an $M>0$ such that for all  $m \geq M$, nef line bundle $\sK$ and generic $y \in Y$ (i.e., contained in a dense open set),
\begin{equation*}
 H^0(X_y,  \sL^m \otimes \sK ) = S^0(X_y, \sigma(X_y,\Delta_y) \otimes ( \sL^m \otimes \sK)_y ) .
\end{equation*}
\end{cor}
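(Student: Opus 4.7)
The plan is to reduce to Proposition \ref{prop:H_0_equals_S_0_open_condition} via a base change to a smooth projective variety. Since the conclusion concerns only what happens on a dense open set of $Y$, we may first pass to an irreducible component of $Y_{\red}$ and assume $Y$ is an integral projective scheme. Invoking de Jong's theorem on alterations (resolution of singularities in positive characteristic is not available, but alterations are), we obtain a surjective, generically finite, projective morphism $\pi : Y' \to Y$ with $Y'$ a smooth projective variety. Form the base change $X' := X \times_Y Y'$ with structural map $f' : X' \to Y'$ and projection $\pi_X : X' \to X$, and let $\Delta' := \pi_X^* \Delta$ and $\sL' := \pi_X^* \sL$.

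First, I would verify that $(f' : X' \to Y', \Delta')$ fits the setup of Notation \ref{notation:results} over the smooth projective base $Y'$: flatness, projectivity, equidimensionality, and the relative $S_2$ and $G_1$ conditions are preserved under flat base change; the Weil divisorial sheaf $\omega_{X'/Y'}^{[r]}(r\Delta')$ equals $\pi_X^*\omega_{X/Y}^{[r]}(r\Delta)$ and hence remains a line bundle; and since the scheme-theoretic fibers of $f'$ are literally fibers of $f$, the sharp $F$-purity assumption on every fiber of $f$ transfers to the same statement for $f'$. To apply Proposition \ref{prop:H_0_equals_S_0_open_condition}, which requires a globally ample line bundle, pick an ample $\sA'$ on $Y'$ and $N \gg 0$ so that $\sL^{\circ} := \sL' \otimes (f')^* \sA'^{N}$ is ample on $X'$ (this is possible because $\sL'$ is $f'$-ample). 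Proposition \ref{prop:H_0_equals_S_0_open_condition} then yields a uniform $M > 0$ such that, for every $m \geq M$, every nef line bundle $\sK^{\circ}$ on $X'$ and generic $y' \in Y'$,
\begin{equation*}
H^0(X'_{y'}, (\sL^{\circ})^m \otimes \sK^{\circ}) = S^0\bigl(X'_{y'}, \sigma(X'_{y'}, \Delta'_{y'}) \otimes ((\sL^{\circ})^m \otimes \sK^{\circ})_{y'}\bigr).
\end{equation*}

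To transfer back to $Y$, given any nef line bundle $\sK$ on $X$, set $\sK^{\circ} := \pi_X^* \sK$, which is nef. Because $(f')^* \sA'^{N}$ is trivial upon restriction to any fiber $X'_{y'}$, and because $X'_{y'} = X_{\pi(y')}$ with $\Delta'_{y'} = \Delta_{\pi(y')}$, the equality above becomes exactly the desired equality on $X_{\pi(y')}$. Finally, since $\pi$ is proper, surjective, and generically finite, the image of any dense open of $Y'$ contains a dense open of $Y$; so the conclusion holds for generic $y \in Y$, for the same $M$.

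The only genuine obstacle is producing the smooth cover $Y' \to Y$ in positive characteristic, which forces the use of de Jong's alteration theorem in place of resolution of singularities. Everything else is a routine verification that the hypotheses of Notation \ref{notation:results} and of Proposition \ref{prop:H_0_equals_S_0_open_condition} are stable under flat base change, and that both $H^0$ and $S^0$ on fibers are computed intrinsically from the fiber data, which is unchanged.
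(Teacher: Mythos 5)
Your proposal is correct and follows essentially the same route as the paper: pull back via alterations of the components of $Y$ to a smooth projective base, check that the hypotheses are preserved (so that, the new base being Gorenstein, Proposition \ref{prop:H_0_equals_S_0_open_condition} applies), replace $\sL$ by its twist by the pullback of an ample bundle from the base --- harmless since only the restrictions to the fibers matter --- and transfer the resulting dense open set back to $Y$. The only small adjustment is that you should run the argument on every irreducible component of $Y_{\red}$ (as the paper does), not just a single one, so that the open set you produce is dense in all of $Y$.
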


\begin{proof}
First, note that all assumptions of the corollary hold for any projective pullback of the family. (To see that the reflexive power $\omega_{X/Y}^{[r \Delta]}(r \Delta)$ of the relative log-canonical sheaf being a line bundle holds for the pulled back family, show that the corresponding sheaf of the pulled back family is isomorphic to the pull-back of the sheaf of the original family and hence is a line bundle. To show the isomorphism, use that it holds in relative codimension one, and \cite[Corollary 3.7]{Hassett_Kovacs_Reflexive_pull_backs}.) Second, note that proving the statement for any projective pullback via a surjective map $Y' \to Y$ yields the statement for the original family as well. Hence, using a cover $Y' \to Y$ consisting of alterations of the components of $Y$,  we may assume that $Y$ is the union of smooth projective varieties. Note, since the statement of the lemma concerns only the value of $\sL$ on the fibers, we may also replace $\sL$ by $\sL \otimes f^* \sA$, and hence assume that it is 
ample. However 
then the result follows 
from Proposition \ref{prop:H_0_equals_S_0_open_condition}.
\end{proof}

\begin{prop}
\label{prop:H_0_equals_S_0_relative}
In the situation of Notation \ref{notation:results}, if $\sL$ is an $f$-ample line bundle on $X$, $Y$ is projective and the pair $(X_y, \Delta_y)$ is sharply $F$-pure for all $y \in Y$, then there is an $M>0$ such that for all  $m \geq M$, nef line bundle $\sK$ and all $y \in Y$,
\begin{equation*}
 H^0(X_y,  \sL^m \otimes \sK ) = S^0(X_y, \sigma(X_y,\Delta_y) \otimes ( \sL^m \otimes \sK)_y ) .
\end{equation*}
\end{prop}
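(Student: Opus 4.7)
The plan is to prove the proposition by Noetherian induction on $\dim Y$, using Corollary \ref{cor:H_0_equals_S_0_open_condition} to handle the equality over a dense open of $Y$ and invoking the inductive hypothesis on the proper closed complement.

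For the base case $\dim Y=0$, the base consists of finitely many closed points $y_1,\dots,y_N$, each fiber $(X_{y_i},\Delta_{y_i})$ is a projective sharply $F$-pure pair, and $\sL|_{X_{y_i}}$ is ample. Applying Corollary \ref{cor:Fujita_type_S_0_equals_H_0} to each such fiber produces integers $M_i>0$ that work for every nef line bundle on $X_{y_i}$; setting $M:=\max_i M_i$ finishes this case, since any nef $\sK$ on $X$ restricts to a nef line bundle on each $X_{y_i}$.

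For the inductive step, I would apply Corollary \ref{cor:H_0_equals_S_0_open_condition} to obtain $M_1>0$ and a dense open $U\subseteq Y$ on which the desired equality holds for all $m\ge M_1$, nef $\sK$, and $y\in U$. Setting $Z:=Y\setminus U$ with the reduced induced scheme structure so that $\dim Z<\dim Y$, I next verify that the restricted family $f|_Z:X_Z\to Z$ again fits into Notation \ref{notation:results}: flatness, projectivity, relative $S_2$ and $G_1$, and equidimensionality descend under the base change; the fibers over $Z$ coincide with the corresponding fibers of $f$ and therefore remain sharply $F$-pure; and $\omega_{X_Z/Z}^{[r]}(r\Delta|_{X_Z})$ remains a line bundle, because it agrees with the restriction $\omega_{X/Y}^{[r]}(r\Delta)|_{X_Z}$ in relative codimension one and both are reflexive, by the same argument used in the reduction step of Corollary \ref{cor:H_0_equals_S_0_open_condition} (invoking \cite[Corollary 3.7]{Hassett_Kovacs_Reflexive_pull_backs}). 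Since $\sL|_{X_Z}$ is $f|_Z$-ample, the inductive hypothesis supplies an $M_2>0$ that works uniformly over $y\in Z$ and all nef line bundles on $X_Z$ --- in particular for the restrictions of nef line bundles on $X$. Taking $M:=\max(M_1,M_2)$ then handles both $y\in U$ and $y\in Z$ simultaneously.

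The step requiring the most care is to extract from Corollary \ref{cor:H_0_equals_S_0_open_condition} a single dense open $U$ that is independent of $m$ and $\sK$, so that one closed subscheme $Z$ admits one application of the inductive hypothesis. If the open in that corollary genuinely depended on $(m,\sK)$, the plan would instead be a fiberwise compactness variant: for each prescribed $y_0\in Y$, choose hyperplane sections through $y_0$ making $(X,\Delta+\sum_i A_i)$ sharply $F$-pure, apply Proposition \ref{prop:surjectivity} iteratively together with Corollary \ref{cor:Fujita_type_S_0_equals_H_0} on $(X,\Delta+\sum_i A_i)$ (whose constant is uniform because $\Delta+\sum_i A_i$ lies in a fixed linear equivalence class), and then combine this with a relative uniformisation of Corollary \ref{cor:Fujita_type_S_0_equals_H_0} on the fibers obtained by running Lemmas \ref{lem:B_1_vanishing_uniform} and \ref{lem:B_vanishing_cohomology_induction} in the family to get uniformity in $y_0$.
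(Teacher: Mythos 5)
Your proposal is essentially the paper's own proof: the paper likewise argues by repeatedly applying the generic statement (Proposition \ref{prop:H_0_equals_S_0_open_condition}, in practice via Corollary \ref{cor:H_0_equals_S_0_open_condition}) to the family restricted over the closed complement of the open set already handled, terminating by the Noetherian property and taking the maximum of the finitely many resulting constants $M$. Your extra care about the restricted family satisfying Notation \ref{notation:results} and about the dense open being independent of $(m,\sK)$ only makes explicit points the paper passes over silently, so the main plan coincides with the paper's argument.
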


\begin{proof} 
By Proposition \ref{prop:H_0_equals_S_0_open_condition}, there is an $M>0$ and an open set $U_1 \subseteq Y$, such that the statement of the proposition holds for all $y \in U_1$, instead of all $y \in Y$. Using Proposition \ref{prop:H_0_equals_S_0_open_condition} again for the pullback of the family over $Y \setminus U_1$, one finds an open set $U_2 \subseteq Y \setminus U_1$ and possibly even bigger $M>0$, such that the statement of the proposition holds also for all $y \in U_2$, and then for all $y \in U_1 \cup U_2$. Iterating this process, by the Noetherian property, one obtains finitely many $U_i \subseteq Y $ as above such that $\bigcup U_i = Y$. This finishes our proof. 
\end{proof}

\subsection{Auxilliary statements about the product construction}

Here we present some statements about the construction of Notation \ref{notation:product}.

\begin{prop}
\label{prop:product_S_r}
Using Notations \ref{notation:product}, if $f : X \to Y$ is a morphism with $Y$ Cohen-Macaulay and $\sF$ a flat  $S_r$ coherent sheaf sheaf on $X$, then $\sF^{(n)}$ is $S_r$ as well. 
\end{prop}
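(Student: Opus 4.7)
My plan is to argue by induction on $n$, with the case $n=1$ being the hypothesis. For $n\geq 2$ I will use the decomposition $X^{(n)}=X^{(n-1)}\times_Y X$ and work with the two projections $q:X^{(n)}\to X^{(n-1)}$ and $r:X^{(n)}\to X$. (I will assume $f$ is flat, which is what is really being used implicitly here and which makes both $q$ and $r$ flat base changes of $f$.)

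The first step is to check that $\sF^{(n)}$ is itself flat over $Y$. For a point $x\in X^{(n)}$ lying over $y=f^{(n)}(x)\in Y$, the stalk $\sF^{(n)}_x$ is naturally a localization of $\sF^{(n-1)}_{q(x)}\otimes_{\sO_{Y,y}}\sF_{r(x)}$; by induction and by hypothesis both factors are $\sO_{Y,y}$-flat, and flatness is preserved under tensor products and localizations, so $\sF^{(n)}$ is $Y$-flat.

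The heart of the argument is then the standard depth formula for a finite module flat over the source of a flat local homomorphism (Matsumura, \emph{Commutative Ring Theory}, Theorem~23.3):
\[
\depth_{\sO_{X^{(n)},x}}\sF^{(n)}_x=\depth\sO_{Y,y}+\depth_{\sO_{X^{(n)}_y,x}}(\sF^{(n)}|_{X^{(n)}_y})_x,
\]
together with the analogous additivity formula for Krull dimension. Since $Y$ is Cohen--Macaulay, $\depth\sO_{Y,y}=\dim\sO_{Y,y}$, so setting $s:=r-\dim\sO_{Y,y}$ (the condition being vacuous when $s\leq 0$), the $S_r$-condition for $\sF^{(n)}$ at $x$ reduces to the $S_s$-condition for the fibre $\sF^{(n)}|_{X^{(n)}_y}=(\sF|_{X_y})^{\boxtimes n}$ at $x$. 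Applying the same formula to $\sF$ itself shows that $\sF|_{X_y}$ is already $S_s$ on $X_y$, so the entire problem reduces to the statement that an external tensor product over the residue field $k(y)$ of $n$ copies of an $S_s$-sheaf remains $S_s$.

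The main obstacle I foresee is precisely this last external-tensor-product reduction, because the $S_r$ hypothesis is absolute rather than geometric, so preservation of $S_s$ under field-extension base change is not automatic when $k(y)$ is imperfect. I plan to handle it by a second induction on $n$, once again via the depth formula along the flat projection $X_y^n\to X_y^{n-1}$: the fibre of this projection at a point $z$ is the field-extension base change $X_y\otimes_{k(y)}k(z)$, and $(\sF|_{X_y})^{\boxtimes n}$ restricts on this fibre to a finite direct sum of copies of the pullback of $\sF|_{X_y}$. The argument then closes, under the mild geometric regularity that is implicit in the paper's moduli setup, by the same depth--dimension bookkeeping as in the main induction.
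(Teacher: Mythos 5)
Your overall route is the paper's: the depth-additivity formula you quote from Matsumura is exactly the criterion the paper invokes from \cite[Proposition 6.3.1 and Corollaire 6.1.2]{Grothendieck_Elements_de_geometrie_algebrique_IV_II}, namely that a $Y$-flat coherent sheaf over the Cohen--Macaulay base $Y$ is $S_r$ if and only if its restriction to each fibre $X^{(n)}_y$ is $S_{r-\dim \sO_{Y,y}}$; the paper then settles the fibrewise statement (an external product over a field of $S_s$ sheaves is $S_s$) by citing \cite[Lemma 4.2]{Patakfalvi_Schwede_Depth_of_F_singularities}. Your preliminary check that $\sF^{(n)}$ is $Y$-flat, and your remark that flatness of $f$ is what makes this work, is correct and in fact a bit more careful than the paper, which uses it silently. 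Your identification of the restriction of $(\sF|_{X_y})^{\boxtimes n}$ to a fibre of $X_y^n\to X_y^{n-1}$ with a finite direct sum of copies of the pullback of $\sF|_{X_y}$ to $X_y\otimes_{k(y)}k(z)$ is also fine.

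The one step that is genuinely not closed is the one you flag yourself: you reduce everything to the claim that the pullback of an $S_s$ sheaf along the field extension $k(y)\to k(z)$ is still $S_s$, and then appeal to ``geometric regularity implicit in the paper's moduli setup.'' That is not a hypothesis of the proposition, so as written the proof is incomplete. However, the worry is unfounded and no such hypothesis is needed: Serre's condition $S_s$ --- unlike $R_s$, reducedness, or normality --- is stable under arbitrary (in particular inseparable) field extensions. Concretely, $X_y\otimes_{k(y)}k(z)\to X_y$ is flat and its fibres are spectra of rings of the form $k(z)\otimes_{k(y)}k(x)$, and a (noetherian) tensor product of two field extensions is Cohen--Macaulay; so the very depth formula you are already using, in its CM-fibre form, gives $\depth$ of the pullback as $\depth$ downstairs plus $\depth$ of a CM fibre, and $S_s$ ascends unconditionally (this is EGA~IV~6.7.7, or again 6.3.1/6.1.2). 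Replacing your regularity appeal by this observation --- or simply quoting \cite[Lemma 4.2]{Patakfalvi_Schwede_Depth_of_F_singularities} as the paper does --- closes your induction, and the resulting argument is in substance the paper's proof with the cited lemma's proof written out.
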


\begin{proof}
Let $g : Z \to W$ be a morphism to a Cohen-Macaulay scheme, and $\sG$ a flat coherent sheaf on $Z$. Then by  \cite[Proposition 6.3.1]{Grothendieck_Elements_de_geometrie_algebrique_IV_II} and \cite[Corollaire 6.1.2]{Grothendieck_Elements_de_geometrie_algebrique_IV_II}, $\sG$ is $S_r$ if and only if for each $Q \in W$, $\sG|_{Z_Q}$ is $S_{r - \dim \sO_{W,Q}}$. Getting back to the situation of our proposition, since $\sF$ is $S_r$, $\sF|_{X_Q}$ is $S_{r - \dim \sO_{Y,Q}}$ for all $Q \in Y$. Then, by \cite[Lemma 4.2]{Patakfalvi_Schwede_Depth_of_F_singularities}, $\sF^{(n)}|_{X^{(n)}_Q}$ is $S_{r - \dim \sO_{Y,Q}}$ for all $Q \in Y$. Therefore, $\sF^{(n)}$ is $S_r$ on $X^{(n)}$.
\end{proof}

\begin{prop}
\label{prop:G_1_for_products}
Using Notations \ref{notation:product}, if $f : X \to Y$ is a  morphism from a $G_1$ scheme to a smooth curve, then $X^{(n)}$ is $G_1$ for any $n$.
\end{prop}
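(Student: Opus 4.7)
Let $P \in X^{(n)}$ be a codimension one point with image $Q := f^{(n)}(P) \in Y$. Since $Y$ is a smooth curve, $Q$ is either closed or the generic point $\eta$, and I would split into these two cases. The one input I would rely on throughout is flatness of $f^{(n)}$ (which, in the intended application, follows from flatness of $f$), giving the dimension formula $\dim \sO_{X^{(n)}, P} = \dim \sO_{X^{(n)}_Q, P} + \dim \sO_{Y, Q}$.

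If $Q$ is closed, then $\sO_{Y, Q}$ is a DVR, so a uniformizer $t$ at $Q$ pulls back to a non-zero-divisor of $\sO_{X^{(n)}, P}$. The above dimension formula forces $\dim \sO_{X^{(n)}_Q, P}=0$, so $P$ is a generic point of an irreducible component of the fiber $X^{(n)}_Q$, and the local equation $f^{(n),*}t$ of that fiber generates the maximal ideal, i.e.\ $\frm_P = (f^{(n),*}t)$. Thus $\sO_{X^{(n)}, P}$ is a one-dimensional Noetherian local ring with principal maximal ideal, hence a DVR, and in particular Gorenstein.

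If $Q = \eta$, then $\sO_{Y, \eta}$ is already a field, so $\sO_{X^{(n)}, P} = \sO_{X^{(n)}_\eta, P}$, and the task reduces to showing that the generic fiber $X^{(n)}_\eta = (X_\eta)^n = X_\eta \times_{k(\eta)} \cdots \times_{k(\eta)} X_\eta$ is $G_1$ over the field $k(\eta)$. The scheme $X_\eta$ is itself $G_1$, since codimension one points of $X_\eta$ correspond via the dimension formula to codimension one points of $X$ lying over $\eta$, and $\sO_{X_\eta, P'} = \sO_{X, P'}$. For the product, I would follow the strategy of Proposition \ref{prop:product_S_r}: reduce the $G_1$ property of $(X_\eta)^n$ to a fibre-by-fibre check via \cite[Proposition 6.3.1]{Grothendieck_Elements_de_geometrie_algebrique_IV_II} and \cite[Corollaire 6.1.2]{Grothendieck_Elements_de_geometrie_algebrique_IV_II}, and then iterate a Gorenstein-in-codimension-one analogue of \cite[Lemma 4.2]{Patakfalvi_Schwede_Depth_of_F_singularities}. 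The main obstacle is exactly this last step: $k(\eta)$ is a typically imperfect function field in positive characteristic, and one must verify that Gorensteinness in codimension one is preserved by the relevant tensor products of local rings over $k(\eta)$; this should work because the codimension one local rings in question are DVR-like, while the pathological inseparable behaviour can only appear in higher codimension.
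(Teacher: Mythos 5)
Your Case 1 contains a genuine error. From flatness and $\dim \sO_{X^{(n)}_Q,P}=0$ you may only conclude that the ideal $(f^{(n),*}t)$ is $\frm_P$-primary, i.e.\ that $\sO_{X^{(n)},P}/(f^{(n),*}t)=\sO_{X^{(n)}_Q,P}$ is Artinian; it need not be a field, because the fibre $X^{(n)}_Q$ can be non-reduced at $P$. So the assertion $\frm_P=(f^{(n),*}t)$ is false and $\sO_{X^{(n)},P}$ need not be a DVR. Concretely, take $Y=\Spec k[t]$ and $X=\Spec k[x,y]$ with $f^*t=y^2$ (so $X$ is smooth, hence $G_1$, and $f$ is flat). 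Already for $n=1$ the uniformizer pulls back to $y^2$, which does not generate the maximal ideal at the generic point of $\{y=0\}$; and for $n=2$ one has $X^{(2)}=\Spec k[x_1,y_1,x_2,y_2]/(y_1^2-y_2^2)$, whose codimension one point $\eta_{\{y_1=y_2=0\}}$ lies over $t=0$ and has a one-dimensional local ring of embedding dimension two: a hypersurface singularity, Gorenstein but not a DVR. Thus your argument would prove regularity in codimension one over closed points, which is false; all one can (and need) prove at such a point is Gorensteinness, and for that you must know that the Artinian ring $\sO_{X^{(n)}_Q,P}$ is Gorenstein, i.e.\ that the closed fibres $X_Q^{(n)}=(X_Q)^{(n)}$ are $G_0$. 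That is exactly the paper's route: flatness and $G_1$ of $X$ give that every fibre $X_y$ is $G_0$ and the generic fibre is $G_1$; these properties are preserved by $n$-fold products over the respective residue fields; and the fibrewise conditions, together with flatness (the pulled-back uniformizer is a nonzerodivisor whose quotient is Gorenstein), give $G_1$ of $X^{(n)}$.

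Your Case 2 is the right reduction but is left open at precisely the decisive point, and the heuristic offered there is off. The fact you need is that Gorensteinness --- unlike regularity --- is stable under arbitrary field extensions and under (localizations of) tensor products of Gorenstein algebras essentially of finite type over a field; this disposes of the imperfectness of $k(\eta)$ and handles $(X_\eta)^{(n)}$, and the same fact is what repairs Case 1, applied to the closed fibres $X_y^{(n)}$. The codimension one local rings of $X_\eta$ are in general only one-dimensional Gorenstein rings (the generic fibre is merely $S_2$ and $G_1$, possibly non-normal), not ``DVR-like'', and inseparable base change can certainly destroy regularity in codimension one --- it just cannot destroy Gorensteinness. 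Once this product/base-change lemma is in place, your two-case skeleton collapses to the paper's short argument: generic fibre $G_1$ and all fibres $G_0$ pass to the products, and these conditions characterize what is needed for $X^{(n)}$ to be $G_1$ over the curve.
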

 
\begin{proof}
Notice that for $X$ to be $G_1$, $X_y$ has to be $G_1$ for generic $y$, and $G_0$ for every $y \in Y$. However then for every $n \in \bZ^+$, $X_y^{(n)}$ is $G_1$ for generic $y$, and $G_0$ for every $y \in Y$, which concludes our proof.
\end{proof}

\begin{prop}
\label{prop:relative_canonical_sheaf_product}
Using Notations \ref{notation:product}, if $f : X \to Y$ is a projective, flat  morphism from an $S_2$ and $G_1$ scheme to a smooth curve, then $\omega_{X/Y}^{(n)} \cong \omega_{X^{(n)}/Y}$.
\end{prop}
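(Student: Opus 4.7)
The plan is to verify the isomorphism on the open locus of $X^{(n)}$ where $f^{(n)}$ is Gorenstein and then extend it across the complement by $S_2$-reflexivity, using that the complement has codimension at least two.

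First, let $G \subseteq X$ be the locus where $f$ is Gorenstein; since $Y$ is smooth, this coincides with the Gorenstein locus of $X$, so the $G_1$ hypothesis gives $\codim_X(X \setminus G) \geq 2$. Each projection $p_i : X^{(n)} \to X$ is flat (as a base change of the flat morphism $f^{(n-1)}$), so $p_i^{-1}(X \setminus G)$ has codimension at least two in $X^{(n)}$, and hence so does the complement of $G^{(n)} := \bigcap_i p_i^{-1}(G) \subseteq X^{(n)}$. Over $G^{(n)}$, the relative dualizing sheaf of $f^{(n)}$ is a line bundle.

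Second, I would establish the isomorphism on $G^{(n)}$ by induction on $n$. For the inductive step, use the Cartesian square
\[
\xymatrix{
X^{(k)} \ar[r]^-{\mathrm{pr}_2} \ar[d]_-{\mathrm{pr}_1} & X^{(k-1)} \ar[d]^-{f^{(k-1)}} \\
X \ar[r]^-{f} & Y
}
\]
together with flat base change for the relative dualizing sheaf of Gorenstein morphisms, which gives $\omega_{X^{(k)}/X}|_{G^{(k)}} \cong \mathrm{pr}_2^* \omega_{X^{(k-1)}/Y}|_{G^{(k)}}$. Combining this with the composition formula $\omega_{X^{(k)}/Y}|_{G^{(k)}} \cong \omega_{X^{(k)}/X}|_{G^{(k)}} \otimes \mathrm{pr}_1^* \omega_{X/Y}|_{G^{(k)}}$ and the induction hypothesis yields $\omega_{X^{(k)}/Y}|_{G^{(k)}} \cong \omega_{X/Y}^{(k)}|_{G^{(k)}}$, with the $k$ projections of $X^{(k)}$ to $X$ appearing exactly once each on the right.

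Finally, I need both sides to be $S_2$ on $X^{(n)}$, so that they are each the pushforward of their common restriction from $G^{(n)}$ and hence isomorphic globally. The scheme $X^{(n)}$ itself is $S_2$ (Proposition \ref{prop:product_S_r} applied to $\sO_X$) and $G_1$ (Proposition \ref{prop:G_1_for_products}), so $\omega_{X^{(n)}/Y}$ is a Weil divisorial sheaf in the sense of Section \ref{sec:notation}, hence $S_2$. For $\omega_{X/Y}^{(n)} = \bigotimes_i p_i^* \omega_{X/Y}$ I would apply Proposition \ref{prop:product_S_r} to $\omega_{X/Y}$. The main technical check is the $Y$-flatness of $\omega_{X/Y}$: since $\omega_{X/Y}$ is a Weil divisorial sheaf it embeds into $\sK(X)$ and is therefore torsion-free on $X$, so multiplication by $f^{*}t$ is injective for any uniformizer $t$ at $y \in Y$ (using flatness of $f$), which by the DVR criterion gives flatness over each $\sO_{Y,y}$ and hence over $Y$.
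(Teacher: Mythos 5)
Your proof is correct and follows the same skeleton as the paper's: identify the two sheaves over the product of the relative Gorenstein locus, note that its complement in $X^{(n)}$ has codimension at least two, and extend across it by a reflexivity/$S_2$ argument. The one genuine difference is the key input guaranteeing that $\omega_{X/Y}^{(n)}$ extends: the paper simply quotes the reflexivity of $\omega_{X/Y}^{(n)}$ from \cite[Lemma 2.11]{Bhatt_Ho_Patakfalvi_Schnell_Moduli_of_products_of_stable_varieties}, whereas you derive the needed $S_2$-property internally from Proposition \ref{prop:product_S_r}, checking the $Y$-flatness of $\omega_{X/Y}$ by hand via torsion-freeness over the DVRs $\sO_{Y,y}$; likewise your induction with flat base change and the composition formula spells out what the paper dismisses as ``clear'' over $U^{(n)}$, and your flatness-of-projections argument for the codimension bound replaces the appeal to Proposition \ref{prop:G_1_for_products}. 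Your route buys self-containedness (no external lemma), at the cost of one small unstated point: Proposition \ref{prop:product_S_r} also needs $\omega_{X/Y}$ itself to be $S_2$ as a sheaf on $X$, which holds for exactly the reason you invoke for $\omega_{X^{(n)}/Y}$ (it is a Weil divisorial sheaf on the $S_2$, $G_1$ scheme $X$), so this is a cosmetic omission rather than a gap; the paper's proof is shorter but leans on the cited result.
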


\begin{proof}
First, notice that both $\omega_{X/Y}$ and $\omega_{X^{(n)}/Y}$ are reflexive. Furthermore, by \cite[Lemma 2.11]{Bhatt_Ho_Patakfalvi_Schnell_Moduli_of_products_of_stable_varieties}, so is $\omega_{X/Y}^{(n)}$. Therefore, it is enough to prove that $\omega_{X/Y}^{(n)} \cong \omega_{X^{(n)}/Y}$ in codimension one. However if $U$ is the relative Gorenstein locus of $f$, then the isomorphism is clear over $U^{(n)}$. This concludes our proof since $\codim_{X^{(n)}} X^{(n)} \setminus U^{(n)} \geq 2$ by Proposition \ref{prop:G_1_for_products}.
\end{proof}

\begin{lem}
\label{lem:product}
In the situation of Notation \ref{notation:Grothendieck_trace_Frobenius}, if $\sL$ is a line bundle on $X$ and $X$ is reduced, then 
\begin{equation*}
S^0 \left( X^{(m)}, \sigma \left( X^{(m)}, \Delta^{(m)} \right) \otimes \sL^{(m)} \right) \cong S^0(X, \sigma(X, \Delta) \otimes \sL)^{\otimes m}.
\end{equation*}
(Here $X^{(m)}$ and $\sL^{(m)}$ are taken over $\Spec k$.)
\end{lem}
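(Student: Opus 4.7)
The plan is to compute both sides via the description of $S^0$ given in Proposition \ref{prop:S_0_F_pure_ideal} and then invoke the K\"unneth formula over $\Spec k$. The key preliminary task is to identify, for every $e > 0$ with $g \mid e$, the sheaf $F^{e}_* \sL_{e,\Delta^{(m)}}$ on $X^{(m)}$ with the external tensor product $\left( F^{e}_* \sL_{e,\Delta} \right)^{(m)}$ in a way that turns the Grothendieck trace $\phi^e_{X^{(m)}}$ into the external tensor product of the traces $\phi^e_X$. This rests on three observations: $K_{X^{(m)}} + \Delta^{(m)} = \sum_{i=1}^m p_i^*(K_X+\Delta)$ as Weil divisorial sheaves (checked on the relative Gorenstein locus and extended by reflexivity, using that $X^{(m)}$ is $S_2$ and $G_1$ over $\Spec k$, which follows from $X$ being reduced by arguments analogous to Propositions \ref{prop:product_S_r} and \ref{prop:G_1_for_products}); the fact that the absolute Frobenius on $X^{(m)}$ is the product of the absolute Frobenii on the factors, because $(a_1 \otimes \cdots \otimes a_m)^{p^e} = a_1^{p^e} \otimes \cdots \otimes a_m^{p^e}$; and the multiplicativity of the Grothendieck trace under external tensor products, which one knows on the Gorenstein locus and which extends by reflexivity.

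Granted these identifications, the K\"unneth formula for coherent sheaves on projective schemes over a field yields
\begin{equation*}
H^0 \left( X^{(m)}, F^{e}_* \sL_{e,\Delta^{(m)}} \otimes \sL^{(m)} \right) \cong \bigotimes_{i=1}^m H^0 \left( X, F^{e}_* \sL_{e,\Delta} \otimes \sL \right),
\end{equation*}
and similarly $H^0(X^{(m)},\sL^{(m)}) \cong H^0(X,\sL)^{\otimes m}$, with the map induced by $\phi^e \otimes \id_{\sL^{(m)}}$ corresponding to the $m$-fold tensor product of the maps induced by $\phi^e \otimes \id_\sL$ on the factors. Because the image of a tensor product of linear maps of vector spaces is the tensor product of the images, setting
\begin{equation*}
V_e := \im \left( H^0(X, F^{e}_* \sL_{e,\Delta} \otimes \sL) \xrightarrow{H^0(\phi^e \otimes \sL)} H^0(X, \sL) \right),
\end{equation*}
the image of the corresponding map on $X^{(m)}$ is $V_e^{\otimes m} \subseteq H^0(X,\sL)^{\otimes m}$.

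Finally, intersect over all $e$. By Remark \ref{rem:S_0_F_pure_ideal_stabilization} the decreasing chain $V_e$ stabilizes (for $e \gg 0$ with $g \mid e$) to $S^0(X,\sigma(X,\Delta) \otimes \sL)$, and then so does the chain $V_e^{\otimes m}$, whose stable value is precisely $S^0(X,\sigma(X,\Delta) \otimes \sL)^{\otimes m}$. Combining this with Proposition \ref{prop:S_0_F_pure_ideal} applied on $X^{(m)}$ delivers the claimed isomorphism.

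The main obstacle is the preliminary identification step, in particular checking that the Grothendieck trace is compatible with external tensor products on the $S_2$, $G_1$ (but possibly non-Gorenstein) scheme $X^{(m)}$. This reduces to the Gorenstein case by reflexivity once one verifies that $X^{(m)}$ is $S_2$, $G_1$ over $\Spec k$ and that its Gorenstein locus contains $U^{(m)}$ for $U$ the Gorenstein locus of $X$; the reducedness hypothesis is what ensures that $X^{(m)}$ has no embedded structure over an algebraically closed base and so behaves well under these reflexive-hull arguments.
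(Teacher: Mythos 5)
Your proposal is correct and follows essentially the same route as the paper: identify $\sL_{e,\Delta^{(m)}}$ with $\bigotimes_i p_i^*\sL_{e,\Delta}$ via the Gorenstein locus and a codimension/reflexivity argument, note that Frobenius pushforward and the trace maps respect this decomposition, and conclude by K\"unneth together with the fact that images (and, after stabilization as in Remark \ref{rem:S_0_F_pure_ideal_stabilization}, the intersection defining $S^0$) are compatible with tensor products over $k$. The only small inaccuracy is your account of the reducedness hypothesis: its role (over the algebraically closed field $k$) is to make the components of $X$ generically smooth, so that no component of $\Delta^{(m)}$ lies in $X^{(m)}_{\sing}$ and $(X^{(m)},\Delta^{(m)})$ is a legitimate pair for which $S^0$ is defined, rather than anything about embedded structure in the reflexive-hull step.
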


\begin{proof}
A word of caution before starting the proof: $S^0 \left( X^{(m)}, \sigma(X^{(m)}, \Delta^{(m)}) \otimes \sL^{(m)} \right)$ to be defined, $(X^{(m)}, \Delta^{(m)})$ has to be a pair. That is, $X^{(m)}$ has to be $S_2$ and $G_1$, and $\Delta^{(m)}$ has to be an element of $\Weil^*(X)$, i.e., none of the components of $\Delta^{(m)}$ can be contained in $\left( X^{(m)} \right)_{\sing}$. The conditions on $X^{(m)}$ follow from Propositions \ref{prop:product_S_r} and \ref{prop:G_1_for_products}. For the condition on $\Delta^{(m)}$, notice that since $X$ is reduced and we are working over an algebraically closed field, the components of $X$ are generically smooth. It is immediate then that all generic points of $\Delta^{(m)}$ are contained in the smooth locus of $X^{(m)}$. 

After the preliminary considerations note that for every $e>0$ such that $g|e$,
\begin{equation*}
(1-p^e)(K_{X^{(m)}} + \Delta^{(m)})= \sum_{i=1}^m p_i^* ((1-p^e)(K_X + \Delta))
\end{equation*}
over the Gorenstein locus and then by codimension argument it holds everywhere. 
 Hence for every $e >0$, such that  $g|e$,
\begin{equation}
\label{eq:product:L_e}
\sL_{e,\Delta^{(m)}} \cong  \bigotimes_{i=1}^m p_i^* \sL_{e,\Delta}
\end{equation}
therefore, 
\begin{equation}
\label{eq:product:canonical}
F^e_* \sL_{e,\Delta^{(m)}} \cong 
\underbrace{F^e_* \left( \bigotimes_{i=1}^m p_i^* \sL_{e,\Delta} \right)}_{\textrm{\eqref{eq:product:L_e}}} \cong
\underbrace{ \left( \bigotimes_{i=1}^m F^e_* p_i^* \sL_{e,\Delta} \right)}_{\textrm{property of Frobenius}} \cong
\underbrace{\bigotimes_{i=1}^m p_i^* F^e_*  \sL_{e,\Delta} .}_{\textrm{flat base-change}}
\end{equation}
Furthermore, the induced trace maps also respect this decomposition. Hence the following commutative diagram concludes the proof.
\begin{equation*}
\xymatrix{
H^0(X, F^e_* \sL_{e,\Delta} \otimes  \sL)^{\otimes m} \ar[r] \ar@{<->}[d]^{\cong \textrm{ by \eqref{eq:product:canonical}}} & H^0(X,  \sL)^{\otimes m} \ar@{<->}[d]^{\cong}  \\
H^0 \left(X^m, F^e_* \sL_{e,\Delta^{(m)}} \otimes  \sL^{(m)} \right) \ar[r] & H^0 \left( X^{(m)},  \sL^{(m)} \right)
}
\end{equation*}

\end{proof}

\section{Semi-positivity}
\label{sec:semi_positivity}

In this section we present the main results of the article. For an outline of the arguments see Section \ref{sec:idea_of_the_proof}.

\subsection{Generic global generation}

\begin{notation}
\label{notation:semi_positivity}
We use the following notations in this section
\begin{enumerate}
\item $(X,\Delta)$ is a pair, such that $K_X + \Delta$ is $\bQ$-Cartier and $p \nmid \ind (K_X + \Delta)$,
\item $f : X \to Y$ is a flat, projective morphism  to a smooth projective curve, 
\item fix also a closed point $y_0 \in Y$ such that  $X_0:= X_{y_0}$ is $S_2$, $G_1$ and reduced and $\Delta$ avoids all  codimension 0 and the singular codimension 1 points of $X_0$, 
\item set $\Delta_0:= \Delta|_{X_0}$ and $r:= \ind (K_X + \Delta)$.
\end{enumerate}
\end{notation}

\begin{rem}
In the situation of Notation \ref{notation:semi_positivity}, note the following:
\begin{enumerate}
\item if a codimension one point $\xi$ of $X_0$ is singular, then $\Delta$ avoids $\xi$ hence it is Cartier there, otherwise if it is non-singular then  $K_X$ is Cartier at $\xi$ and hence by the index assumption on $K_X + \Delta$, $\Delta$ is $\bQ$-Cartier with index not divisible by $p$, 
\item therefore, at all codimension one points of $\xi$, $\Delta$ can be sensibly restricted,
\item $(X_0,\Delta|_{X_0})$ is a pair and
\item by \cite[Lemma 4.6]{Patakfalvi_Schwede_Depth_of_F_singularities}, $\Delta|_{X_0}$ agrees with the $F$-different of $\Delta$ at $X_0$.
\end{enumerate}

\end{rem}

\begin{prop}
\label{prop:generically_globally_generated}
In the situation of Notation \ref{notation:semi_positivity}, choose a Cartier divisor $N$ and set $\sN:=\sO_X(N)$. Assume that
\begin{enumerate}
\item \label{itm:generically_globally_generated:ample_on_fibers}  $N - K_{X/Y} - \Delta $ is an $f$-ample $\bQ$-divisor,
\item \label{itm:generically_globally_generated:H_0_equals_S_0} $H^0(X_0,\sN)= S^0(X_0, \sigma(X_0, \Delta_0) \otimes \sN|_{X_0})$,
\item \label{itm:generically_globally_generated:nef} $N - K_{X/Y} - \Delta$ is nef.
\end{enumerate}
Then $f_* \sN \otimes \omega_Y(2y_0)$ is generically globally generated.
\end{prop}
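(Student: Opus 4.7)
The plan is to lift sections from $X_0$ to $X$ via Schwede's lifting statement (Proposition~\ref{prop:surjectivity}) applied to the pair $(X, \Delta + X_0)$, and then to convert that lifting into generic global generation of $\sG := f_*\sM$ through cohomology and base change. I set $\sM := \sN \otimes f^*\omega_Y(2y_0)$, which is $\sO_X(L)$ for the Cartier divisor $L := N + f^*K_Y + 2X_0$ (writing $X_0 = f^*y_0$ and using $K_X \sim K_{X/Y} + f^*K_Y$).

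To apply Proposition~\ref{prop:surjectivity} to $(X, \Delta + X_0)$ with $Z = X_0$, I need (a) ampleness of $L - K_X - (\Delta + X_0) = (N - K_{X/Y} - \Delta) + X_0$ and (b) $X_0$ to be a union of $F$-pure centers of $(X, \Delta + X_0)$. For (a), hypotheses \eqref{itm:generically_globally_generated:ample_on_fibers} and \eqref{itm:generically_globally_generated:nef} make $N - K_{X/Y} - \Delta$ both $f$-ample and nef, while $X_0 = f^*y_0$ is the pullback of an ample divisor on the curve $Y$; Kleiman's criterion---checking separately curves contracted by $f$ and curves dominating $Y$---then yields ampleness of the sum. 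For (b), $F$-inversion of adjunction (e.g., \cite[Observation~4.5]{Patakfalvi_Schwede_Depth_of_F_singularities}) reduces the question to sharp $F$-purity of $(X_0, \Delta_0)$ at the generic points of $X_0$, which is implicit in hypothesis \eqref{itm:generically_globally_generated:H_0_equals_S_0}. Proposition~\ref{prop:surjectivity} then yields a surjection $S^0(X, \sigma(X, \Delta + X_0) \otimes \sM) \twoheadrightarrow S^0(X_0, \sigma(X_0, \Delta_0) \otimes \sM|_{X_0})$. Since $\sM|_{X_0} \cong \sN|_{X_0}$ (the twist $f^*\omega_Y(2y_0)$ becomes trivial over $y_0$) and hypothesis \eqref{itm:generically_globally_generated:H_0_equals_S_0} identifies the target with $H^0(X_0, \sN|_{X_0})$, composing with the tautological inclusion $S^0(X, \cdot) \hookrightarrow H^0(X, \sM)$ shows that the restriction map $H^0(X, \sM) \to H^0(X_0, \sM|_{X_0})$ is surjective.

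With this lifting in hand, set $\sG := f_*\sM = f_*\sN \otimes \omega_Y(2y_0)$. The surjective restriction map factors as
$$H^0(Y, \sG) = H^0(X, \sM) \longrightarrow \sG \otimes k(y_0) \xrightarrow{\ \phi^0(y_0)\ } H^0(X_0, \sM|_{X_0}),$$
where $\phi^0(y_0)$ is the cohomology and base change comparison map. Surjectivity of the composition forces $\phi^0(y_0)$ to be surjective, hence an isomorphism by \cite[Theorem~III.12.11]{Hartshorne_Algebraic_geometry}. Therefore the first arrow $H^0(Y, \sG) \to \sG \otimes k(y_0)$ is itself surjective, and Nakayama's lemma produces finitely many global sections generating $\sG$ on a Zariski open neighborhood of $y_0$. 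Since $Y$ is an irreducible curve this neighborhood is dense, yielding generic global generation.

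The main technical obstacle is the verification of point (b): one must extract sharp $F$-purity of $(X_0, \Delta_0)$ at the generic points of $X_0$ from hypothesis \eqref{itm:generically_globally_generated:H_0_equals_S_0} and propagate it to $(X, \Delta + X_0)$ along the Cartier divisor $X_0$ using $F$-inversion of adjunction. The precise choice of twist $2y_0$ (rather than $y_0$) is dictated by this calculation: one copy of $f^*y_0$ inside $L$ is consumed by the $-X_0$ coming from the enlarged boundary in Schwede's ampleness hypothesis, and the second copy is exactly what supplies the positivity along $Y$ making $L - K_X - (\Delta + X_0)$ ample.
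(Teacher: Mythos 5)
Your proposal follows the paper's own proof almost step for step: the same twist $M=N+f^*K_Y+2X_0$, the same application of Proposition \ref{prop:surjectivity} to the pair $(X,\Delta+X_0)$ with $Z=X_0$, hypothesis \eqref{itm:generically_globally_generated:H_0_equals_S_0} used to identify $S^0(X_0,\sigma(X_0,\Delta_0)\otimes\sM|_{X_0})$ with $H^0(X_0,\sM|_{X_0})$, and the cohomology-and-base-change plus Nakayama argument at $y_0$ (which the paper leaves implicit) to deduce generic global generation. However, the step you yourself single out as the main technical obstacle is justified incorrectly, and one other justification is not a proof.

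First, hypothesis \eqref{itm:generically_globally_generated:H_0_equals_S_0} carries no $F$-purity information: it is an equality of two spaces of sections (possibly both zero), and the proposition nowhere assumes $(X_0,\Delta_0)$ to be sharply $F$-pure. So your plan to ``extract sharp $F$-purity of $(X_0,\Delta_0)$ at the generic points of $X_0$ from hypothesis (2)'' cannot be carried out, and your route to point (b) via $F$-inversion of adjunction is left without input. What is actually needed, and what the paper uses, is only that $(X,\Delta+X_0)$ is sharply $F$-pure at the generic points $\eta$ of $X_0$ and that $\sI_{X_0}$ satisfies the compatibility in the definition of an $F$-pure center; this is automatic from Notation \ref{notation:semi_positivity}: since $X_0$ is reduced, $\sO_{X_0,\eta}$ is a field, so the local equation of the Cartier divisor $X_0$ generates the maximal ideal of the one-dimensional local ring $\sO_{X,\eta}$, which is therefore a DVR, and $\Delta$ avoids $\eta$. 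Locally at $\eta$ the pair is a regular one-dimensional germ whose boundary is the reduced divisor $X_0$ with coefficient one, which is sharply $F$-pure there with $X_0$ an $F$-pure center; no $F$-purity hypothesis on the fiber pair and no inversion of adjunction is needed. Second, strict positivity against every irreducible curve is not Kleiman's criterion and does not imply ampleness, so your justification that $(N-K_{X/Y}-\Delta)+X_0$ is ample is incomplete as stated; argue instead via Nakai--Moishezon (in $((N-K_{X/Y}-\Delta)+f^*y_0)^{\dim V}\cdot V$ all terms are nonnegative, the term without $f^*y_0$ is positive for vertical $V$ by relative ampleness, and the term with exactly one factor $f^*y_0$ is positive when $V$ dominates $Y$), i.e.\ the standard fact that a nef and $f$-ample divisor plus the pullback of an ample divisor from the base curve is ample. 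With these two repairs your argument coincides with the paper's.
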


\begin{proof}
Set $M:=N + f^* K_Y + 2 X_0$ and $\sM:=\sO_X(M)$. Consider the commutative diagram below. 
\begin{equation}
\label{eq:generically_globally_generated:commutative_diagram}
\xymatrix{
f_* \sM \ar[r] & (f_* \sM) \otimes k(y_0) \ar@{^(->}[r] &  H^0(X_0, \sM) = S^0(X_0, \sigma(X_0,\Delta_0) \otimes \sM|_{X_0}) \\
S^0( X, \sigma(X, \Delta + X_0) \otimes \sM)  \otimes \sO_Y \ar[u] \ar[rru]
} ,
\end{equation}
where $H^0(X_0, \sM) = S^0(X_0, \sigma(X_0,\Delta_0) \otimes \sM)$, because
\begin{equation*}
 H^0(X_0, \sM)  \cong  H^0(X_0, \sN) = S^0(X_0, \sigma(X_0,\Delta_0) \otimes \sN|_{X_0}) \cong S^0(X_0, \sigma(X_0,\Delta_0) \otimes \sM|_{X_0}).
\end{equation*}
Note that 
\begin{equation}
\label{eq:generically_globally_generated:divisors}
M - K_X - \Delta - X_0 
= 
N + f^* K_Y + 2 X_0 - K_{X/Y} - f^* K_Y - \Delta - X_0
= 
N - K_{X/Y} - \Delta + X_0 .
\end{equation}
Note also that $N - K_{X/Y} - \Delta$ is nef by assumption \eqref{itm:generically_globally_generated:nef} and it is relatively ample by \eqref{itm:generically_globally_generated:ample_on_fibers}. Furthermore, $X_0$ is the pullback of an ample divisor from $Y$. Hence, $N - K_{X/Y} - \Delta + X_0$  is ample and then by \eqref{eq:generically_globally_generated:divisors} so is $M - K_X - \Delta - X_0$. Note now the following: 
\begin{itemize}
\item $p \nmid \ind(K_X + \Delta) = \ind (K_X + \Delta +  X_0)$, and
\item since $X_0$ is smooth at all its general points (by reducedness) and $\Delta$ contains no components of $X_0$, $X_0$ is  a union of $F$-pure centers of $(X, \Delta + X_0)$.
\end{itemize}
Hence, Proposition \ref{prop:surjectivity} implies that the diagonal arrow in \eqref{eq:generically_globally_generated:commutative_diagram} is surjective. This finishes our proof.
\end{proof}

\subsection{Semi-positivity general case}

\begin{lem}
\label{lem:generic_global_generation_nef}
If $\sF$ is a vector bundle on a smooth curve $Y$ and $\sL$ is a line bundle such that for every $m>0$, $\left( \bigotimes_{i=1}^m\sF \right) \otimes \sL$ is generically globally generated, then $\sF$ is nef. 
\end{lem}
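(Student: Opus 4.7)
My plan is to exploit the Barton--Kleiman style characterization of nefness on a smooth projective curve: $\sF$ is nef if and only if, for every finite morphism $\pi : Y' \to Y$ from a smooth projective curve $Y'$, every quotient line bundle of $\pi^*\sF$ has non-negative degree. Because generic global generation of a coherent sheaf is preserved under pullback along a finite (hence dominant) map, the hypothesis on $\sF$ and $\sL$ passes to $\pi^*\sF$ and $\pi^*\sL$ on $Y'$: for every $m>0$ the sheaf $\left(\bigotimes_{i=1}^m \pi^*\sF\right) \otimes \pi^*\sL$ is again generically globally generated. Hence it suffices to prove the following: every quotient line bundle $\sQ$ of $\sF$ itself satisfies $\deg \sQ \geq 0$.

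Given such a quotient $\sF \twoheadrightarrow \sQ$, I form the induced surjection $\bigotimes_{i=1}^m \sF \twoheadrightarrow \sQ^{\otimes m}$ and tensor by $\sL$ to obtain a surjection $\left(\bigotimes_{i=1}^m \sF\right) \otimes \sL \twoheadrightarrow \sQ^{\otimes m} \otimes \sL$. Since quotients of generically globally generated sheaves are generically globally generated, $\sQ^{\otimes m} \otimes \sL$ is generically globally generated, and for a line bundle on a smooth projective curve this is equivalent to $H^0$ being nonzero. Therefore $m \deg \sQ + \deg \sL \geq 0$ for every $m>0$; dividing by $m$ and letting $m \to \infty$ forces $\deg \sQ \geq 0$.

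The only delicate point is pinning down the correct curve-version of nefness. In positive characteristic one cannot a priori drop the finite pullback in the Barton--Kleiman test (unlike in characteristic zero where semistability and purely inseparable base changes cooperate), so I would phrase the argument as above, applied uniformly on every finite cover $Y' \to Y$. Once the criterion is invoked, the rest of the proof is a one-line manipulation with tensor powers and degrees.
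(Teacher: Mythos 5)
Your proposal is correct and is essentially the paper's own argument: the paper likewise tests nefness via quotient line bundles $\sE$ of $\tau^*\sF$ on finite covers $\tau : Z \to Y$, pulls back the generic global generation hypothesis, surjects $\left(\bigotimes_{i=1}^m \tau^*\sF\right)\otimes\tau^*\sL$ onto $\sE^{\otimes m}\otimes\tau^*\sL$ to get a nonzero section, and deduces $m\deg\sE + \deg\tau^*\sL \geq 0$, hence $\deg\sE \geq 0$. No substantive difference.
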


\begin{proof}
Take a finite cover $\tau : Z \to Y$ by a smooth curve and a quotient line bundle $\sE$ of $\tau^* \sF$. Since $\left( \bigotimes_{i=1}^m\sF \right) \otimes \sL$ is generically globally generated, so is $\left( \bigotimes_{i=1}^m \tau^* \sF \right) \otimes \tau^* \sL$ and hence  $\sE^m \otimes \tau^* \sL$ as well. Therefore $m \deg (\sE) + \deg (\tau^* \sL) \geq 0$ for all $m>0$. In particular then $\deg (\sE) \geq 0$. Since this is true for arbitrary $\tau$ and $\sE$,$\sF$ is nef indeed.
\end{proof}

\begin{lem}
\label{lem:assumptions_for_product}
In the situation of Notation \ref{notation:semi_positivity}, $f^{(n)} : (X^{(n)},\Delta^{(n)}) \to Y$ also satisfies the assumptions of Notation  \ref{notation:semi_positivity}, where we use the product notations introduced in Notation \ref{notation:product}. 
\end{lem}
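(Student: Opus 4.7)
The plan is to verify the four conditions of Notation \ref{notation:semi_positivity} for $f^{(n)} : (X^{(n)},\Delta^{(n)}) \to Y$ one by one, invoking the product lemmas already proved in the preceding subsection.

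First, flatness and projectivity of $f^{(n)}$ are preserved under fiber product with itself over $Y$, so condition (2) of Notation \ref{notation:semi_positivity} is immediate. For condition (1), the pair axioms: $X^{(n)}$ is $S_2$ by Proposition \ref{prop:product_S_r} applied to $\sF = \sO_X$ with $Y$ a smooth curve, it is $G_1$ by Proposition \ref{prop:G_1_for_products}, and it is of pure dimension since each factor $X \to Y$ is equidimensional over the one-dimensional base. By Proposition \ref{prop:relative_canonical_sheaf_product} we have $\omega_{X^{(n)}/Y} \cong \omega_{X/Y}^{(n)} = \bigotimes_i p_i^* \omega_{X/Y}$, and the same computation in relative codimension one shows $\omega_{X^{(n)}/Y}^{[r]}(r\Delta^{(n)}) \cong \bigotimes_i p_i^* \omega_{X/Y}^{[r]}(r\Delta)$, which is a line bundle since each factor is. Hence $r(K_{X^{(n)}/Y}+\Delta^{(n)})$ is Cartier with the same index $r$ not divisible by $p$.

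Next, for condition (3), I need to check that $X_0^{(n)}$ is $S_2$, $G_1$ and reduced, and for condition (4), that $\Delta^{(n)}$ avoids all codimension $0$ and singular codimension $1$ points of $X_0^{(n)}$. Since $X_0^{(n)}$ is the fiber of $f^{(n)}$ over the closed point $y_0$ and $f^{(n)}$ is flat over the smooth curve $Y$, $X_0^{(n)}$ can be identified with $X_0 \times_k \cdots \times_k X_0$. Reducedness follows from the reducedness of $X_0$ together with the fact that $k$ is algebraically closed (so generic smoothness of each factor gives generic smoothness of the product, hence generic reducedness, and then the $S_2$ property bootstraps this to reducedness). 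The $S_2$ and $G_1$ properties of $X_0^{(n)}$ follow by applying Propositions \ref{prop:product_S_r} and \ref{prop:G_1_for_products} to the restricted family $X_0 \to \Spec k(y_0)$, or directly by using that $X_0^{(n)}$ over $\Spec k$ is obtained from $X_0$ (which is $S_2, G_1$) by iterated fibre product over a field.

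Finally, for the avoidance condition, the irreducible components of $X_0^{(n)}$ are products $W_{i_1}\times\cdots\times W_{i_n}$ of irreducible components $W_{i_j}$ of $X_0$, so the codimension zero points of $X_0^{(n)}$ are the tuples of generic points of the $W_{i_j}$, and these avoid $\Supp p_j^*\Delta$ because $\Delta$ avoids the generic points of $X_0$. Similarly, the singular locus of $X_0^{(n)}$ is the union $\bigcup_i p_i^{-1}((X_0)_{\sing})$ (since products of smooth points over an algebraically closed field are smooth), so its codimension one part is $\bigcup_i p_i^{-1}(Z_i)$ with $Z_i$ ranging over the codimension one singular subvarieties of $X_0$; these avoid each $\Supp p_j^*\Delta$ because for $j=i$ the divisor $\Delta$ avoids the singular codimension one points of $X_0$ by hypothesis, and for $j\neq i$ the intersection $p_i^{-1}(Z_i)\cap p_j^{-1}(\Supp \Delta_0)$ has codimension at least two by a dimension count in the product. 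The main technical point I expect to be slightly delicate is this last codimension argument in the product, but it is a standard consequence of the flatness of the projections and the fact that $X_0$ is equidimensional and reduced over the algebraically closed field $k$.
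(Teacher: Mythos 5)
Your proposal is correct and takes essentially the same route as the paper: a condition-by-condition check using Propositions \ref{prop:product_S_r}, \ref{prop:G_1_for_products} and \ref{prop:relative_canonical_sheaf_product} for $S_2$, $G_1$ and the relative canonical sheaf (hence the index statement), plus generic smoothness of the components of $X_0$ for reducedness and the avoidance conditions on $X_0^{(n)}$. The only item the paper records that you pass over quickly is that $\Delta^{(n)}$ has no component inside $X^{(n)}_{\sing}$ on the \emph{total space} (so that $(X^{(n)},\Delta^{(n)})$ is a pair in the sense of Definition \ref{defn:pair}), which follows by the same generic-smoothness argument applied to the generic fiber rather than to $X_0^{(n)}$.
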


\begin{proof}
We show every assumption of Notation \ref{notation:semi_positivity} for $f^{(n)} : X^{(n)} \to Y$ one by one. 
\begin{itemize}
\item $X^{(n)}$ is $S_2$ and $G_1$ by Propositions \ref{prop:product_S_r} and \ref{prop:G_1_for_products}.
\item Since all the components of $X_0$ are generically smooth, the same holds for a generic fiber, and then one can see that $\Delta^{(n)} \in \Weil^*(X^{(n)})$, in particular, $(X^{(n)},\Delta^{(n)})$ is a pair.
\item $f: X^{(n)} \to Y$  is flat, projective.
\item $X^{(n)}_0$ is reduced, $S_2$ (by \cite[Lemma 4.2]{Patakfalvi_Schwede_Depth_of_F_singularities}) and $G_1$.
\item $\Delta^{(n)}|_{X^{(n)}_{y_0}} = \sum_{i=1}^n p_i^* \Delta_0$ avoids the codimension 0 and the singular codimension one points of $X^{(n)}_{y_0}$, because the same holds for $\Delta_0$ and every component of $X_0$ is generically smooth.
\item  By Proposition \ref{prop:relative_canonical_sheaf_product}, 
\begin{multline*}
\qquad \ind(K_X + \Delta) = \ind(K_{X/Y} + \Delta)  = \ind((K_{X/Y}+ \Delta)^{(n)})
\\ = 
\ind (K_{X^{(n)}/Y} + \Delta^{(n)}) = \ind (K_{X^{(n)}}+ \Delta^{(n)})
\end{multline*}
and hence $K_{X^{(n)}}+ \Delta^{(n)}$ is $\bQ$-Cartier
and $p \nmid \ind(K_{X^{(n)}}+ \Delta^{(n)})$.
\end{itemize}
\end{proof}

\begin{prop}
\label{prop:semi_positive}
In the situation of Notation \ref{notation:semi_positivity}, choose a Cartier divisor $N$ and set $\sN:=\sO_X(N)$.
Assume that
\begin{enumerate}
\item \label{itm:semi_positive:vanishing} $R^i f_* \sN=0$ for all $i>0$,
\item \label{itm:semi_positive:ample_on_fibers} $N - K_{X/Y} - \Delta$ is an $f$-ample $\bQ$-divisor,
\item \label{itm:semi_positive:H_0_equals_S_0} $H^0(X_0,\sN)= S^0(X_0, \sigma(X_0, \Delta_0) \otimes \sN|_{X_0})$,
\item \label{itm:semi_positive:nef} $N - K_{X/Y} - \Delta$ is nef.
\end{enumerate}
Then $f_* \sN $ is a nef vector bundle. 
\end{prop}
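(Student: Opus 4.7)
The plan is to combine the generic global generation result (Proposition \ref{prop:generically_globally_generated}) with the product trick of Notation \ref{notation:product}, and then conclude nefness via Lemma \ref{lem:generic_global_generation_nef}. The guiding principle is that while Proposition \ref{prop:generically_globally_generated} only yields global generation after twisting by a line bundle independent of $n$, applying it to the $n$-fold fiber product produces a statement about $\bigotimes^n f_* \sN$, and taking $n \to \infty$ washes out the twist.

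First I would observe that assumption \eqref{itm:semi_positive:vanishing} combined with Lemma \ref{lem:cohomology_and_base_change_pushforward_zero} yields $H^i(X_y, \sN) = 0$ for all $i>0$ and $y \in Y$; by Lemma \ref{lem:cohomology_and_base_change_fibers_zero}, this makes $f_* \sN$ locally free, so only nefness requires proof. Next I would verify, using Lemma \ref{lem:assumptions_for_product}, that the product family $f^{(n)} : (X^{(n)}, \Delta^{(n)}) \to Y$ fits into Notation \ref{notation:semi_positivity}, and then check the three hypotheses of Proposition \ref{prop:generically_globally_generated} for the Cartier divisor $N^{(n)} = \sum_{i=1}^n p_i^* N$ with associated sheaf $\sN^{(n)}$. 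Using Proposition \ref{prop:relative_canonical_sheaf_product} we have
\begin{equation*}
N^{(n)} - K_{X^{(n)}/Y} - \Delta^{(n)} = \sum_{i=1}^n p_i^* (N - K_{X/Y} - \Delta),
\end{equation*}
which is a sum of pullbacks of an $f$-ample, globally nef $\bQ$-divisor, hence is $f^{(n)}$-ample and nef, giving assumptions \eqref{itm:generically_globally_generated:ample_on_fibers} and \eqref{itm:generically_globally_generated:nef} of Proposition \ref{prop:generically_globally_generated}.

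The main obstacle, and the step I would take most care with, is verifying hypothesis \eqref{itm:generically_globally_generated:H_0_equals_S_0} of Proposition \ref{prop:generically_globally_generated} at the fiber $X_0^{(n)}$. Here I would invoke Lemma \ref{lem:product} applied to the reduced fiber $X_0$ (reduced by Notation \ref{notation:semi_positivity}), which gives
\begin{equation*}
S^0\bigl(X_0^{(n)}, \sigma(X_0^{(n)}, \Delta_0^{(n)}) \otimes (\sN|_{X_0})^{(n)}\bigr) \cong S^0(X_0, \sigma(X_0, \Delta_0) \otimes \sN|_{X_0})^{\otimes n}.
\end{equation*}
Combined with the Künneth isomorphism $H^0(X_0^{(n)}, \sN^{(n)}|_{X_0^{(n)}}) \cong H^0(X_0, \sN|_{X_0})^{\otimes n}$ and hypothesis \eqref{itm:semi_positive:H_0_equals_S_0} of the proposition being proved, this shows $H^0 = S^0$ on $X_0^{(n)}$ as required. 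A small subtlety that will need justification is that $\sN^{(n)}|_{X_0^{(n)}} \cong (\sN|_{X_0})^{(n)}$, which holds because the product construction commutes with base change.

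With all hypotheses verified, Proposition \ref{prop:generically_globally_generated} yields that $f^{(n)}_* \sN^{(n)} \otimes \omega_Y(2y_0)$ is generically globally generated. By Lemma \ref{lem:cohomology_and_base_change_product} applied to the coherent flat sheaf $\sN$ (using $H^i(X_y, \sN) = 0$ established above), there is an isomorphism $f^{(n)}_* \sN^{(n)} \cong \bigotimes_{i=1}^n f_* \sN$. Therefore $\bigl(\bigotimes_{i=1}^n f_* \sN\bigr) \otimes \omega_Y(2y_0)$ is generically globally generated for every $n > 0$, with the twist $\omega_Y(2y_0)$ independent of $n$. Lemma \ref{lem:generic_global_generation_nef} then concludes that $f_* \sN$ is nef, finishing the proof.
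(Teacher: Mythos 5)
Your proposal is correct and follows essentially the same route as the paper's own proof: apply Proposition \ref{prop:generically_globally_generated} to the $n$-fold fiber product $(X^{(n)},\Delta^{(n)})$ (justified by Lemma \ref{lem:assumptions_for_product}, Lemma \ref{lem:product} and the K\"unneth formula), identify $f^{(n)}_* \sN^{(n)}$ with $\bigotimes_{i=1}^n f_* \sN$ via Lemma \ref{lem:cohomology_and_base_change_product}, and conclude with Lemma \ref{lem:generic_global_generation_nef}. The only cosmetic difference is that you spell out the base-change and $\sN^{(n)}|_{X_0^{(n)}} \cong (\sN|_{X_0})^{(n)}$ details slightly more explicitly than the paper does.
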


\begin{proof}
The proof uses the notations introduced in Notation \ref{notation:product}. Let $n>0$ be an integer. First, notice the following:
\begin{itemize}
\item by Lemma \ref{lem:assumptions_for_product}, the assumptions of Notation \ref{notation:semi_positivity} are satisfied for $f^{(n)} : (X^{(n)},\Delta^{(n)}) \to Y$
\item $N^{(n)} - K_{X^{(n)}/Y} - \Delta^{(n)} = (N - K_{X/Y} - \Delta)^{(n)}$ is $f^{(n)}$-ample,
\item by Lemma \ref{lem:product} and the Künneth formula, 
\begin{multline*}
\qquad H^0 \left( X_0^{(n)},\sN^{(n)} \right)
=
H^0(X_0,\sN)^{\otimes n} \\ \cong   S^0(X_0, \sigma(X_0, \Delta_0) \otimes \sN)^{\otimes n}
\cong 
S^0 \left( X_0^{(n)}, \sigma  \left( X_0^{(n)}, \Delta^{(n)} \right) \otimes \sN^{(n)} \right), 
\end{multline*}
\item $N^{(n)} - K_{X^{(n)}/Y} - \Delta^{(n)} = (N - K_{X/Y} - \Delta)^{(n)}$ is nef.
\end{itemize}
Hence Proposition \ref{prop:semi_positive} applies to $(X^{(n)},\Delta^{(n)})$ and $N^{(n)}$, and consequently, $f^{(n)}_*( \sN^{(n)} ) \otimes \omega_Y(2y_0)$ generically globally generated for every $n>0$. 

By assumption \eqref{itm:semi_positive:vanishing} and Lemmas \ref{lem:cohomology_and_base_change_pushforward_zero}, \ref{lem:cohomology_and_base_change_fibers_zero} and \ref{lem:cohomology_and_base_change_product}, $f^{(n)}_*( \sN^{(n)} ) \cong \bigotimes_{i=1}^n f_* \sN$ and $f_* \sN$ is a vector bundle. Therefore, $f_* \sN$ is a vector bundle, such that $\left( \bigotimes_{i=1}^n f_* \sN \right) \otimes \omega_Y(2y_0)$ is generically globally generated for every $n>0$. Hence, by Lemma \ref{lem:generic_global_generation_nef}, $f_* \sN$ is a nef vector bundle. This concludes our proof.

\end{proof}

\begin{prop}
\label{prop:nef}
In the situation of Proposition \ref{prop:semi_positive}, if furthermore
 $\sN$ is $f$-nef and $\sN_y$ globally generated except possibly finitely many $y \in Y$, 
 then $\sN$ is nef.

\end{prop}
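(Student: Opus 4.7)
The plan is to combine the nefness of $f_*\sN$ provided by Proposition \ref{prop:semi_positive} with the evaluation map $f^*f_*\sN \to \sN$ to control the degree of $\sN$ on every curve in $X$. First, I would analyze when this evaluation map is surjective. By hypothesis \eqref{itm:semi_positive:vanishing} of Proposition \ref{prop:semi_positive}, $R^if_*\sN=0$ for $i>0$, so Lemma \ref{lem:cohomology_and_base_change_pushforward_zero} gives that the base-change map $f_*\sN\otimes k(y)\to H^0(X_y,\sN_y)$ is an isomorphism for every $y\in Y$. Combined with the hypothesis that $\sN_y$ is globally generated for all but finitely many $y$, the map $f^*f_*\sN\to\sN$ is surjective on the preimage $f^{-1}(U)$ of some open $U\subseteq Y$ whose complement is finite.

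Next I would reduce nefness of $\sN$ to checking $\sN\cdot C\ge 0$ for each irreducible curve $C\subseteq X$. There are two cases. If $f(C)$ is a point, then $C$ is contained in a fiber and $\sN\cdot C\ge 0$ follows directly from the assumption that $\sN$ is $f$-nef. Otherwise $f(C)=Y$ and I would pass to the normalization $\nu\colon\tilde C\to C\hookrightarrow X$, which is a smooth projective curve equipped with a finite morphism $\tau:=f\circ\nu\colon\tilde C\to Y$. Pulling back the evaluation map along $\nu$ yields a homomorphism of vector bundles on $\tilde C$:
\begin{equation*}
\tau^*f_*\sN \;=\;\nu^*f^*f_*\sN\;\longrightarrow\;\nu^*\sN,
\end{equation*}
and by the surjectivity established above (and the fact that $\tilde C$ meets $f^{-1}(U)$ since $\tau$ is surjective) this map is surjective on a nonempty open subset of $\tilde C$.

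Let $\sI\subseteq\nu^*\sN$ denote the image of the map. Since $\tilde C$ is a smooth curve and $\nu^*\sN$ is a line bundle, $\sI$ is a line sub-bundle of $\nu^*\sN$ with torsion cokernel, so $\deg\sI\le\deg(\nu^*\sN)=\sN\cdot C$. On the other hand, $\sI$ is realized as a quotient of $\tau^*f_*\sN$. By Proposition \ref{prop:semi_positive}, $f_*\sN$ is a nef vector bundle on $Y$; nefness is preserved under finite pullback, so $\tau^*f_*\sN$ is nef on $\tilde C$, and hence every quotient line bundle of it has non-negative degree. Therefore $\deg\sI\ge 0$, which gives $\sN\cdot C\ge\deg\sI\ge 0$ and completes the proof.

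The main (minor) technical point to check carefully will be the surjectivity of $f^*f_*\sN\to\sN$ over $f^{-1}(U)$: the vanishing of higher direct images yields the isomorphism $f_*\sN\otimes k(y)\cong H^0(X_y,\sN_y)$, and then global generation of $\sN_y$ for $y$ in a cofinite open set promotes this to surjectivity of the evaluation map on the corresponding open set of $X$. Everything else reduces to standard degree bookkeeping on the smooth curve $\tilde C$.
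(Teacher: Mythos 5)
Your proposal is correct and follows essentially the same route as the paper: use the vanishing $R^if_*\sN=0$ plus cohomology-and-base-change and the fiberwise global generation to get generic surjectivity of $f^*f_*\sN\to\sN$, then split curves into vertical ones (handled by $f$-nefness) and horizontal ones, where the generically surjective map from the pullback of the nef bundle $f_*\sN$ forces $\deg(\sN|_C)\ge 0$. Your normalization and degree bookkeeping on $\tilde C$ only spells out what the paper leaves implicit.
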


\begin{proof}
Consider the following  commutative diagram for every $y \in Y$.
\begin{equation*}
\xymatrix{
f^* f_* \sN \ar[d] \ar[r] &  \sN  \ar[d] \\
H^0(X_y, \sN) \otimes \sO_{X_y} \ar[r] & \sN_y
}
\end{equation*}
The left vertical arrow is surjective because of assumption \eqref{itm:semi_positive:vanishing} of Proposition \ref{prop:semi_positive} and Lemmas \ref{lem:cohomology_and_base_change_pushforward_zero} and \ref{lem:cohomology_and_base_change_fibers_zero}. The bottom horizontal arrow is surjective except finitely many $y \in Y$. Hence $f^* f_* \sN \to \sN$ is surjective except possibly at points lying  over finitely many points of $y \in Y$. To show that $\sN$ is nef, we have to show that $\deg \sN|_C \geq 0$ for every smooth projective curve $C$ mapping finitely to $X$. By assumption this follows if $C$ is vertical. So, we may assume that $C$ maps surjectively onto $Y$. However, then $(f^* f_* \sN)|_C \to \sN|_C$ is generically surjective. Since $f_* \sN$ is nef by Proposition \ref{prop:semi_positive}, so is $f^* f_* \sN$ and hence $\deg (\sN|_C) \geq 0$ has to hold. 
\end{proof}

\subsection{Semi-positivity when the relative log-canonical divisor is relatively semi-ample}
\label{sec:semi_positivity_semi_ample}

\begin{lem}
\label{lem:number_theory}
Let $r>0$ be an integer. If $t>1$ is a real number, then there is a rational number $\frac{a}{b}$, such that $r | b+1$ and 
\begin{equation}
\label{eq:number_theory:goal}
\frac{a}{b+1}< t  < \frac{a}{b}
\end{equation}
Furthermore, both $a$ and $b$ can be chosen to be arbitrarily big.
\end{lem}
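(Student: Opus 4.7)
My plan is to rewrite the target inequality $\frac{a}{b+1} < t < \frac{a}{b}$ in the equivalent form
\begin{equation*}
 tb < a < t(b+1) = tb + t,
\end{equation*}
so the question becomes: find an integer $a$ in the open interval $(tb, tb+t)$ for some positive integer $b$ with $r \mid b+1$. Since the length of this interval is exactly $t > 1$, every such interval contains at least one integer; the divisibility constraint is easy to arrange by restricting $b$ to an arithmetic progression.

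Concretely, I would proceed as follows. First, pick any positive integer $k$ and set $b := kr - 1$, so that $b+1 = kr$ is divisible by $r$. Then define $a := \lfloor tb \rfloor + 1$. By definition $a > tb$, and since $t > 1$ we have
\begin{equation*}
 a = \lfloor tb \rfloor + 1 \le tb + 1 < tb + t = t(b+1),
\end{equation*}
so $a$ sits in the required open interval. This gives the rational number $\frac{a}{b}$ with the desired divisibility property and the strict inequalities \eqref{eq:number_theory:goal}.

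For the ``arbitrarily big'' clause, observe that $b = kr - 1$ grows without bound as $k \to \infty$, and then $a > tb > b$ also grows without bound (using $t > 1$). Hence by choosing $k$ large enough, both $a$ and $b$ can be made as large as desired.

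I do not foresee a genuine obstacle here: the lemma is a direct number-theoretic observation, and the only subtlety is to maintain strict inequality on the upper end, which is exactly where the hypothesis $t > 1$ (rather than $t \ge 1$) is used, via the estimate $\lfloor tb \rfloor + 1 \le tb + 1 < tb + t$.
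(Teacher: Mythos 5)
Your proof is correct, and it takes a genuinely different (and more direct) route than the paper's. You clear denominators and observe that the target condition is simply that the open interval $(tb,\, tb+t)$ contains an integer, which is automatic because its length is $t>1$; the divisibility constraint is absorbed by fixing $b=kr-1$ in advance, and $a:=\lfloor tb\rfloor+1$ does the job, with $a \le tb+1 < tb+t$ using $t>1$ exactly once. The paper instead inverts the inequalities, substitutes $b+1=cr$, and reduces the lemma to finding a positive integer $a$ with $1 > \left\{ \frac{a}{tr} \right\} > 1-\frac{1}{r}$, i.e.\ to a statement about fractional parts of multiples of $\frac{1}{tr}$, which it then settles (somewhat tersely) from $\frac{1}{tr}<\frac{1}{r}$. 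Your version avoids that fractional-part step entirely and is constructive, giving $a$ and $b$ explicitly; the paper's reformulation is no stronger, just organized around the reciprocal $\frac{1}{t}$. One trivial caveat: with $b=kr-1$ you should take $kr\ge 2$ (relevant only when $r=1$, $k=1$) so that $b\ge 1$ and $\frac{a}{b}$ makes sense; since the ``arbitrarily big'' clause forces large $k$ anyway, this costs nothing.
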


\begin{proof}
\eqref{eq:number_theory:goal} is equivalent to 
\begin{equation*}
\frac{b+1}{a} > \frac{1}{t}  > \frac{b}{a} .
\end{equation*}
Since $b+1= c r$ has to hold for some integer $c$,  \eqref{eq:number_theory:goal} together with $r| b+1$ is equivalent to finding a rational number $\frac{c}{a}$, such that
\begin{equation*}
\frac{c}{a} > \frac{1}{tr} > \frac{c}{a} - \frac{1}{ar} ,
\end{equation*}
which is equivalent to finding positive integers $a$ and $c$, such that
\begin{equation*}
c > \frac{1}{tr} a > c - \frac{1}{r},
\end{equation*}
which is equivalent to finding a positive integer $a$, such that
\begin{equation*}
1 > \left\{ \frac{1}{tr} a \right\} > 1 - \frac{1}{r}.
\end{equation*}
However there is such an $a$, since $t>1$ and hence $\frac{1}{tr}< \frac{1}{r}$. 
\end{proof}

\begin{lem}
\label{lem:relatively_semi_ample}
If $X \to Y$ is a flat, projective morphism to a smooth, projective curve, $\sL$ is an $f$-nef line bundle on $X$ such that $\sL_y$ is semi-ample for generic $y \in Y$ and $\sA$ an ample line bundle on $Y$,  then $\sL \otimes f^* \sA^l$ is nef for $l \gg 0$.
\end{lem}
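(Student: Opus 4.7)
The plan is to show that for $l$ sufficiently large, $\deg(\sL \otimes f^* \sA^l)|_C \geq 0$ for every irreducible curve $C \subseteq X$, splitting the verification into the case where $C$ is contracted by $f$ versus where $C$ surjects onto $Y$.

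I will first use that $\sL|_{X_\eta}$ is semi-ample to choose an integer $m > 0$ with $\sL^m|_{X_\eta}$ globally generated. Spreading this out (via upper semi-continuity of $h^0$ together with the fact that $f_*\sL^m$ is compatible with base change on a non-empty open set) will yield a non-empty open $U \subseteq Y$ over which the evaluation map $f^* f_* \sL^m \to \sL^m$ is surjective. This is the step that requires the most care; once it is in place, the rest is bookkeeping.

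Next, since $Y$ is a smooth projective curve and $\sA$ is ample, I will fix $N \gg 0$ so that $f_* \sL^m \otimes \sA^N$ is globally generated on $Y$. Pulling back to $X$ and composing with the evaluation map produces a surjection $H^0(Y, f_*\sL^m \otimes \sA^N) \otimes \sO_X \twoheadrightarrow \sL^m \otimes f^* \sA^N$ over $f^{-1}(U)$, so the base locus of $\sL^m \otimes f^* \sA^N$ is contained in the finitely many fibers of $f$ over $Y \setminus U$.

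To conclude, I will test nefness on an arbitrary irreducible curve $C \subset X$. If $C$ lies in a fiber, then $f^* \sA|_C$ is trivial and $\deg \sL|_C \geq 0$ by $f$-nefness, so $\deg(\sL \otimes f^* \sA^l)|_C \geq 0$ for every $l$. If instead $C$ surjects onto $Y$, then $C$ is not contained in the base locus above, whence $\deg(\sL^m \otimes f^* \sA^N)|_C \geq 0$; since $\deg(f^* \sA)|_C = (\deg f|_C)(\deg \sA) > 0$, this rearranges to $\deg \sL|_C \geq -\tfrac{N}{m}\deg(f^*\sA)|_C$, so $\deg(\sL \otimes f^* \sA^l)|_C \geq 0$ for every integer $l \geq N/m$. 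The lemma then follows for $l \geq \lceil N/m \rceil$.
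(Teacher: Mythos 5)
Your proof is correct and follows essentially the same route as the paper: spread out global generation of a power $\sL^m$ from the generic fiber to an open set of $Y$ via cohomology and base change, twist by a suitable power of $\sA$ pulled back from the curve to make the evaluation map generically surjective, and then check nefness separately on vertical curves (using $f$-nefness) and horizontal curves (using generic global generation along $C$). The only difference is bookkeeping: the paper fixes $l$ with $(f_*\sL^n)\otimes\sA^{nl}$ globally generated, while you fix $N$ and conclude for all $l \geq N/m$, which is the same argument.
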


\begin{proof}
By \cite[Corollary 12.9]{Hartshorne_Algebraic_geometry} on an open set (depending on $n$) of $Y$, $f_* \sL^n$ is locally free and 
\begin{equation*}
(f_*  \sL^n)_y \to H^0(X_y, \sL^n)
\end{equation*}
is an isomorphism. Therefore, for $n \gg 0$ and divisible enough, $f^* f_* \sL^n \to \sL^n$ is  surjective over an open set of $Y$. Choose $l >0$  such that $(f_* \sL^n) \otimes \sA^{n l}$ is globally generated. Choose also any curve $C$ on $X$. If $C$ is vertical, $\deg \sL \otimes f^* \sA^l|_C \geq 0$ by the assumption that $\sL$ is $f$-nef. Otherwise if $C$ is horizontal, by the choice of $\sA$, $(f^* f_* \sL^n) \otimes f^* \sA^{nl}$ is globally generated. Hence, by the homomorphism ($f^* f_* \sL^n) \otimes f^* \sA^{nl} \to \sL^n \otimes f^* \sA^{nl}$, which is surjective over an open set of $Y$,  $\sL^n \otimes f^* \sA^{nl}|_C$ is generically globally generated. Therefore $\deg \sL \otimes f^* \sA^l|_C \geq 0$. 
\end{proof}

\begin{thm}
\label{thm:relative_canonical_nef}
In the situation of Notation \ref{notation:semi_positivity}, if $(X_0, \Delta_0)$ is sharply $F$-pure, $K_{X/Y}+\Delta$ is $f$-nef and  $K_{X_y} + \Delta_y$ is semi-ample for generic $y \in Y$ (e.g., this is satisfied if $K_{X/Y} + \Delta$ is $f$-ample or $f$-semi-ample), then $K_{X/Y} + \Delta$ is nef.
\end{thm}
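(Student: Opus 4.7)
The plan is to argue by contradiction. Suppose $K_{X/Y}+\Delta$ is not nef; let $B := f^{*}y_{1}$ be the pullback of a closed point $y_{1}\in Y$ (an $f$-trivial, nef Cartier divisor on $X$) and set
\[
t \;:=\; \inf\bigl\{\,s\geq 0 \mid K_{X/Y}+\Delta+sB\text{ is nef}\,\bigr\}.
\]
Then $0<t<\infty$ ($f$-nefness gives finiteness, the contradiction assumption gives positivity), and $K_{X/Y}+\Delta+tB$ is nef because the nef cone is closed. The first step is to reduce to $t>1$: for any integer $d>1/t$ coprime to $p$, pick a finite separable cover $\tau: Y'\to Y$ of degree $d$ with $Y'$ a smooth projective curve, and replace $(X,\Delta,f)$ by its base change along $\tau$, replacing $B$ by the pullback of a single closed point of $Y'$. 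Since $\tau$ is separable and $k$ is algebraically closed, every fiber of the new family is isomorphic to the corresponding fiber of the old, so all hypotheses of Notation~\ref{notation:semi_positivity} and of the theorem are preserved; since finite pullback reflects nefness, this reduction is harmless, and the new threshold equals $dt>1$.

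With $t>1$, the interval $((r-1)t,rt)$ has length $t>1$ and contains an integer $a$. Set $b:=r-1$. Then $a/b>t$ (so $K_{X/Y}+\Delta+(a/b)B$ is nef), $(r-1)(a/b)=a\in\bZ$ (so $aB$ is Cartier), and $\frac{r-1}{r}(a/b)=a/r<t$. Fix an $f$-ample Cartier divisor $L_{0}$ on $X$ and choose $y_{0}\neq y_{1}$ generic enough that, in addition to Notation~\ref{notation:semi_positivity}, the pair $(X_{0},\Delta_{0})$ is sharply $F$-pure and $K_{X_{0}}+\Delta_{0}$ is semi-ample. Pick $M\gg 0$ and $A$ ample on $Y$, and set $L:=ML_{0}+f^{*}A$, choosing $A$ so that $L$ is globally ample and $M$ large enough that the following hold uniformly for every $f$-nef line bundle $\sK$ on $X$ and every nef line bundle $\sK'$ on $X_{0}$:
\begin{enumerate}
\item $R^{i}f_{*}\bigl(\sO_{X}(L)\otimes \sK\bigr)=0$ for $i>0$ (Theorem~\ref{thm:relative_Fujita});
\item $\bigl(\sO_{X}(L)\otimes \sK\bigr)|_{X_{y}}$ is globally generated for every $y\in Y$ (Proposition~\ref{prop:Fujita_type_global_generation});
\item $H^{0}(X_{0},\sO_{X}(L)|_{X_{0}}\otimes \sK')=S^{0}(X_{0},\sigma(X_{0},\Delta_{0})\otimes \sO_{X}(L)|_{X_{0}}\otimes \sK')$ (Corollary~\ref{cor:Fujita_type_S_0_equals_H_0}).
\end{enumerate}

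Define the Cartier divisor $N_{q}:=q\bigl[r(K_{X/Y}+\Delta)+aB\bigr]+L$ for $q\in\bZ^{\geq 0}$. I claim by induction on $q$ that $N_{q}$ is nef. The base $q=0$ is $N_{0}=L$, which is ample. For the inductive step, I apply Proposition~\ref{prop:nef} to $\sN:=\sO_{X}(N_{q+1})$ via the key identity
\[
N_{q+1}-K_{X/Y}-\Delta \;=\; N_{q}+(r-1)\bigl[K_{X/Y}+\Delta+(a/b)B\bigr],
\]
whose right-hand side is nef (first summand by induction, second because $a/b>t$) and $f$-ample (since $L$ is $f$-ample and the remaining summands are $f$-nef). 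Setting $\sK:=\sO_{X}((q+1)[r(K_{X/Y}+\Delta)+aB])$, which is $f$-nef, items (1) and (2) furnish $R^{i}f_{*}\sN=0$ for $i>0$ and global generation of $\sN_{y}$ for every $y\in Y$, and $\sN$ is $f$-ample hence $f$-nef. Since $y_{0}\neq y_{1}$, one has $B|_{X_{0}}=0$, so $\sK|_{X_{0}}=\sO_{X_{0}}((q+1)r(K_{X_{0}}+\Delta_{0}))$ is nef by semi-ampleness of $K_{X_{0}}+\Delta_{0}$, and item (3) gives the $S^{0}=H^{0}$ identity on $X_{0}$. Proposition~\ref{prop:nef} then delivers $N_{q+1}$ nef, closing the induction.

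Intersecting $N_{q}$ with an arbitrary curve $C\subset X$ and dividing by $q$ as $q\to\infty$ yields $[r(K_{X/Y}+\Delta)+aB]\cdot C\geq 0$, so the $\bQ$-divisor $K_{X/Y}+\Delta+(a/r)B$ is nef. Since $a/r<t$, this contradicts the minimality of $t$. The main technical obstacle is the Cartier book-keeping inside the induction: one needs $(r-1)(a/b)\in\bZ$ and simultaneously $a/b>t$ and $\frac{r-1}{r}(a/b)<t$, a triple constraint satisfiable only after the preliminary base-change reduction to $t>1$.
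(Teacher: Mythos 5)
Your argument never makes essential use of the hypothesis that $K_{X_y}+\Delta_y$ is semi-ample for generic $y$, and the place where that hypothesis is actually needed is exactly the step you dispose of in a parenthesis: the finiteness of the threshold $t$. Relative nefness of $K_{X/Y}+\Delta$ only controls vertical curves; for a horizontal curve $C$ one has $(K_{X/Y}+\Delta+sB)\cdot C=(K_{X/Y}+\Delta)\cdot C+s\deg(C\to Y)$, and nothing gives a uniform lower bound for $(K_{X/Y}+\Delta)\cdot C/\deg(C\to Y)$ as $C$ varies, so ``$f$-nefness gives finiteness'' is unjustified: unlike relative ampleness, relative nefness does not become absolute after twisting by the pullback of an ample divisor. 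The paper gets $t<\infty$ from Lemma \ref{lem:relatively_semi_ample}, whose proof uses semi-ampleness on the generic fiber to make $f^*f_*\sL^n\to\sL^n$ generically surjective; this is precisely where the semi-ample hypothesis of Theorem \ref{thm:relative_canonical_nef} enters. Note that the one place you do invoke semi-ampleness (nefness of $\sK|_{X_0}$) is superfluous, since the restriction of an $f$-nef line bundle to a fiber is nef; so if your argument were complete it would prove nefness of $K_{X/Y}+\Delta$ assuming only $f$-nefness and $p\nmid r$, a strictly stronger statement which the paper obtains only for $r=1$ (Theorem \ref{thm:relative_canonical_nef2}) and by a different induction. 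This is the genuine gap.

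Apart from this, your skeleton is the paper's (threshold, base change to force $t>1$, arithmetic choice of $a$, induction via Proposition \ref{prop:nef} with a Fujita-type auxiliary divisor, contradiction), and two of your shortcuts are real simplifications: taking $b=r-1$ and an integer $a\in((r-1)t,rt)$ replaces Lemma \ref{lem:number_theory}, and arguing numerically with $B=f^*y_1$ and an arbitrary separable degree-$d$ cover replaces the explicit cyclic-cover construction, since nefness is numerical and finite surjective pullback detects it. A few points are glossed, though: you may not ``choose $y_0$ generic enough that $(X_0,\Delta_0)$ is sharply $F$-pure,'' since sharp $F$-purity is assumed only at the fixed $y_0$ of Notation \ref{notation:semi_positivity} and no openness statement for it is invoked or available here --- fortunately you need not move $y_0$ at all, only choose $y_1\neq y_0$; the assertion that unchanged fibers preserve all hypotheses does not by itself give the total-space conditions for the base-changed family ($S_2$, $G_1$, $\bQ$-Cartierness and index of $K_{X'}+\Delta'$, components of the pulled-back $\Delta$ avoiding the singular locus), which the paper verifies with some care; and the choice $b=r-1$ degenerates at $r=1$, where the needed summand is simply $aB$ with $0<a<t$. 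These are repairable; the missing justification of $t<\infty$, i.e.\ the missing use of the semi-ampleness hypothesis via Lemma \ref{lem:relatively_semi_ample}, is not.
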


\begin{proof}
\emph{Assume that $K_{X/Y} + \Delta$ is not nef.}  Choose any general, effective ample Cartier divisor on $Y$ and let $B$ be its pullback to $X$. By Lemma \ref{lem:relatively_semi_ample} for $s \gg 0$, $K_{X/Y} + \Delta + sB$ is nef. Let then
\begin{equation*}
t:=\min\{0 \leq s  \in \bR | K_{X/Y} + \Delta + sB \textrm{ is nef}\}. 
\end{equation*}
If $t=0$, there is nothing to prove. Hence we may assume $t>0$. If $t \leq 1$, then by taking a degree $d$ smooth cyclic cover\footnote{To be precise, if $B= f^*D$, then we choose a general very ample, effective divisor $H$, such that there is a divisor $G$, for which $D + H \sim dG$. Then $Y':= \Spec_Y \left( \bigoplus_{i=0}^{d-1} \sO_Y(-iG) \right)$, where the ring structure is given by the embedding $\sO_Y(-dG) \hookrightarrow \sO_Y$ using $D + H \sim dG$.}  $Y' \to Y$ of $Y$ for some $d \gg 0$ and $p \nmid d$, and pulling back everything there, we may replace $B$  by a Cartier divisor $B'$, such that $B'd=B$. Indeed, set $X':= X ×_Y Y'$ and let $\pi : X' \to X$ be the natural projection. Then, one has to verify that the pulled back family still satisfies the assumptions of Notation \ref{notation:semi_positivity}: for example $X'$ is $S_2$ because according to  \cite[Corollaire 6.1.2 and Proposition 6.3.1]{Grothendieck_Elements_de_geometrie_algebrique_IV_II}, a flat fibration over a 
smooth curve is $S_2$ exactly if the generic fiber is $S_2$ and the special fibers are $S_1$. This is stable under pullback. One has to be also careful about pulling back $\Delta$. It is not $\bQ$-Cartier, but according to Definition \ref{defn:pair}, none of its components is contained in $X_{\sing}$ and hence $\Delta$  is Cartier outside of a codimension at least two open set. Then one can pull back by pulling back over this open set, and then extending it in the unique way. The only trap in this process is that a priori one of the components of $\pi^* \Delta$ can end up in the singular locus. However, this cannot happen, since outside of finitely many general fibers, $\pi$ is \'etale, and $\Delta$ does not contain any components of the branched (and therefore general) fibers by assumption. The other conditions are immediate\footnote{Remember that since $D$ and $H$ are general, $\pi$ is \'etale over $X_0$.}. 

After applying the above pullback and replacing $\pi^*B$ by $B'$, $t$ changes to $d \cdot t$, because: 
\begin{equation*}
K_{X'/Y} + \pi^* \Delta + d \cdot t B'  = \pi^* ( K_{X/Y} + \Delta  + t B ) 
\end{equation*}
is nef, and   if there was a $s < dt$, such that $K_{X'/Y} + \pi^* \Delta + s B'$ was nef, then for every curve $C$ on $X'$,  $K_{X'/Y} + \pi^* \Delta + sB' |_C \geq 0$ would hold. However, then also 
\begin{equation*}
0 \leq K_{X'/Y} + \pi^* \Delta + sB' |_C = \pi^* \left. \left( K_{X/Y} + \Delta  + \frac{s}{d} B \right) \right|_C = \left. K_{X/Y} + \Delta  + \frac{s}{d} B \right|_{\pi_* (C)}.
\end{equation*}
would hold. Since every curve on $X$ is the pushfoward of a curve from $X'$, this would mean that $K_{X/Y} + \Delta  + \frac{s}{d} B$ was nef, which would contradict the definition of $t$.
Hence as claimed, $t$ changes to $d \cdot t$.  Therefore, since $d$ can be chosen to be arbitrary big, we may indeed assume that $t>1$. Furthermore, by Lemma \ref{lem:number_theory}, then there is a rational number $\frac{a}{b}$, such that $r | b+1$ and $\frac{a}{b+1}< t  < \frac{a}{b}$. Set $A:= \frac{a}{b} B$ and $p:=b+1$. 

By Theorem \ref{thm:relative_Fujita}, Corollary \ref{cor:Fujita_type_S_0_equals_H_0} and Proposition \ref{prop:Fujita_type_global_generation}, there is an ample Cartier divisor $Q$ on $X$ such that for all $i>0$  and f-nef Cartier divisor $L$, 
\begin{equation}
\label{eq:relative_canonical_nef:assumption_relative}
R^i f_* (\sO_X(Q+ L))=0,
\end{equation}
\begin{equation}
\label{eq:relative_canonical_nef:assumption_central_fiber}
  H^0(X_0, \sO_{X_0}(Q + L)) = S^0(X_0, \sigma(X_0,\Delta_0) \otimes \sO_{X_0}(Q + L)) 
\end{equation}
and
\begin{equation}
\label{eq:relative_canonical_nef:global_generation}
\sO_{X_y}(Q + L ) \textrm{ is globally generated for all } y \in Y . 
\end{equation}
We prove by induction that $q(p (K_{X/Y} + \Delta) + (p-1)A) + Q$ is nef for all $q>0$. For $q=0$ the statement is true by the choice of $Q$. Hence, we may assume that we $(q-1)(p (K_{X/Y} + \Delta) + (p-1)A) + Q$ is nef. Now, we verify that the conditions of Proposition \ref{prop:nef} hold for $N:=q(p (K_{X/Y} + \Delta) + (p-1)A) + Q$ and $\sN:= \so_X(N)$. Indeed:
\begin{itemize}
\item $N$ is Cartier by the choice of $p$ and $A$,
\item $R^i f_* \sN = 0 $ for all $i>0$ because of \eqref{eq:relative_canonical_nef:assumption_relative} and that 
\begin{equation}
\label{eq:relative_canonical_nef:f_nef}
N- Q= q(p(K_{X/Y} + \Delta) + (p-1)A) 
\end{equation}
is an $f$-nef Cartier divisor,
\item the $\bQ$-divisor
\begin{equation*}
\qquad N- K_{X/Y} -  \Delta =\Big( (p-1) (K_{X/Y} + \Delta + A) \Big) + \Big((q-1)(p(K_{X/Y} + \Delta) + (p-1)A) + Q \Big)
\end{equation*}
is not only $f$-ample, but also nef, because of the inductional hypothesis and that $A= \frac{a}{b}B \geq tB$,
\item  using \eqref{eq:relative_canonical_nef:assumption_central_fiber} and the $f$-nefness of \eqref{eq:relative_canonical_nef:f_nef}, 
\begin{equation*}
H^0(X_0, \sN) = S^0(X_0, \sigma(X_0, \Delta_0) \otimes  \sN),
\end{equation*}
\item since all the summands of $N$ are $f$-nef, so is $N$,
\item $N|_{X_y}$ is globally generated, by  \eqref{eq:relative_canonical_nef:global_generation} and since $N - Q$ is $f$-nef according to \eqref{eq:relative_canonical_nef:f_nef}.
\end{itemize}
Hence Proposition \ref{prop:nef} implies that $N$ is nef. This finishes our inductional step, and hence the proof of the nefness of $q(p (K_{X/Y} + \Delta) + (p-1)A) + Q$ for every $q>0$. However, then $p( K_{X/Y} + \Delta) + (p-1) A$ has to be nef as well. Therefore, so is 
\begin{multline*}
 \frac{1}{p} (p (K_{X/Y} + \Delta) + (p-1) A) = (K_{X/Y} + \Delta) + \frac{p-1}{p} A \\ = (K_{X/Y} + \Delta) + \frac{p-1}{p} \frac{a}{b} B = (K_{X/Y}+ \Delta) + \frac{a}{b+1} B .
\end{multline*}
However, $\frac{a}{b+1}<t$, which contradicts the definition of $t$. Therefore our assumption was false, $K_{X/Y} + \Delta$ is nef indeed.

\end{proof}

\begin{lem}
\label{lem:pullback_results}
In the situation of Notation \ref{notation:results}, let $\tau : Y' \to Y$ be a finite morphism from a smooth curve and set $X' := X ×_Y Y'$ and $\pi : X' \to X$, $f' : X' \to Y'$ the induced morphisms. Let $\Delta'$  be the pullback of $\Delta$, which is defined by the following procedure. Take the open set $U$ of Notation \ref{notation:results} and notice that $r\Delta|_U$ is a Cartier divisor. Then, pull it back to $\pi^{-1}U$, extend it uniquely over $X'$ and finally divide all its coefficients by $r$. The extension is unique, since $\codim_{X'} X' \setminus \pi^{-1}U \geq 2$. In the above situation we claim that
\begin{equation*}
\pi^* \omega_{X/Y}^{[r]}(r \Delta) \cong \omega_{X'/Y'}^{[r]}(r \Delta') . 
\end{equation*}
In particular, $p \nmid r=\ind (K_{X'/Y'} + \Delta')$. 
\end{lem}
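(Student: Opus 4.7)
The plan is to verify the asserted isomorphism on the large open locus $\pi^{-1}(U) \subseteq X'$ and then to extend it across a codimension $\geq 2$ set by reflexivity. First I would observe that since $\tau : Y' \to Y$ is a finite map between smooth curves it is flat, hence so is $\pi : X' \to X$. Combining the flatness of $\pi$ with the fact that $f$ is relatively $S_2$ and $G_1$, one sees that $f' : X' \to Y'$ is also relatively $S_2$ and $G_1$, and so $X'$ itself is $S_2$ and $G_1$. In particular the right-hand side $\omega_{X'/Y'}^{[r]}(r\Delta')$ is a well-defined Weil divisorial sheaf on $X'$, and in fact a line bundle on the preimage $\pi^{-1}(U)$ of the locus $U$ from Notation~\ref{notation:results} (that is, the open set over which $f$ is Gorenstein and $r\Delta$ is Cartier).

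Next I would establish the isomorphism over $\pi^{-1}(U)$. Since $f$ is Gorenstein on $U$ and $\tau$ is flat, the standard flat base change for relative dualizing sheaves yields $\pi|_{\pi^{-1}(U)}^* \omega_{U/Y} \cong \omega_{\pi^{-1}(U)/Y'}$, and hence
\[
\pi^* \bigl( \omega_{X/Y}^{[r]}(r\Delta) \bigr) \big|_{\pi^{-1}(U)} \;\cong\; \omega_{\pi^{-1}(U)/Y'}^{\,r}\bigl( r\Delta'|_{\pi^{-1}(U)} \bigr) \;\cong\; \omega_{X'/Y'}^{[r]}(r\Delta')\big|_{\pi^{-1}(U)},
\]
where the middle equality uses that $r\Delta'|_{\pi^{-1}(U)}$ is by construction the flat pullback of the Cartier divisor $r\Delta|_U$.

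For the extension step, the codimension estimate $\codim_{X'}(X' \setminus \pi^{-1}(U)) \geq 2$ (which follows from the relative $S_2, G_1$ hypotheses exactly as in the paper's definition of $U$) combined with \cite[Corollary 3.7]{Hassett_Kovacs_Reflexive_pull_backs} shows that $\pi^* \omega_{X/Y}^{[r]}(r\Delta)$ is reflexive on the $S_2, G_1$ scheme $X'$. Since the target $\omega_{X'/Y'}^{[r]}(r\Delta')$ is also reflexive, the isomorphism on $\pi^{-1}(U)$ extends uniquely to an isomorphism of sheaves on all of $X'$. This is the only delicate point, since one must confirm that the Hassett--Kov\'acs hypothesis applies in the present $S_2, G_1$ (non-normal) setting; the verification is immediate because $\pi$ is finite flat and the sheaf in question is invertible in relative codimension one.

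Finally, since $\pi^* \omega_{X/Y}^{[r]}(r\Delta)$ is the pullback of a line bundle, it is a line bundle, so $\omega_{X'/Y'}^{[r]}(r\Delta')$ is a line bundle; this shows that $r$ works as a Cartier index for $K_{X'/Y'}+\Delta'$, and in particular $p \nmid \ind(K_{X'/Y'}+\Delta')$ as claimed.
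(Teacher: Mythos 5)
Your argument follows the same route as the paper's: verify agreement of the two sheaves over $\pi^{-1}(U)$, note that $\pi^*\omega_{X/Y}^{[r]}(r\Delta)$ is reflexive (being the pullback of a line bundle, it \emph{is} a line bundle), that $\omega_{X'/Y'}^{[r]}(r\Delta')$ is reflexive because it is the pushforward of a line bundle from an open set whose complement has codimension at least two (\cite[Corollary 3.7]{Hassett_Kovacs_Reflexive_pull_backs}), and then extend the isomorphism across the codimension-two locus (\cite[Proposition 3.6]{Hassett_Kovacs_Reflexive_pull_backs}); the line-bundle conclusion and the statement about the index follow as in the paper. The one point you must repair is the opening flatness claim: in Notation \ref{notation:results} the base $Y$ is an arbitrary projective scheme, not a smooth curve (in the applications $Y'$ is a smooth curve mapping finitely into a possibly higher-dimensional $Y$), so $\tau:Y'\to Y$, and hence $\pi$, need not be flat. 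You lean on this flatness twice: for the identification $\pi^*\omega_{U/Y}\cong\omega_{\pi^{-1}(U)/Y'}$ via ``flat base change,'' and in the ``finite flat'' remark justifying the extension step. The identification is nonetheless correct, but for a different reason: what is needed is base change along $\tau$ for the morphism $f|_U$, which is flat with Gorenstein fibers, and the relative dualizing sheaf of such a morphism commutes with \emph{arbitrary} base change, with no hypothesis on $\tau$; similarly, $r\Delta'$ over $\pi^{-1}(U)$ is by definition the pullback of the Cartier divisor $r\Delta|_U$, so no flatness enters there, and the relative $S_2$ and $G_1$ conditions for $f'$ are inherited simply because the fibers of $f'$ are fibers of $f$. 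With the justification rewritten along these lines (and the reflexivity of the pullback attributed to its being a line bundle rather than to Corollary 3.7), your proof coincides with the paper's.
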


\begin{proof}
Notice that by construction $\pi^* \omega_{X/Y}^{[r]}(r \Delta)$ and $\omega_{X'/Y'}^{[r]}(r \Delta')$ agree over $\pi^{-1}U$, that is, in relative codimension one. Notice also that since $\omega_{X/Y}^{[r]}(r \Delta)$ is assumed to be a line bundle, so is $\pi^* \omega_{X/Y}^{[r]}(r \Delta)$, and therefore $\pi^* \omega_{X/Y}^{[r]}(r \Delta)$ is reflexive in the sense of \cite{Hassett_Kovacs_Reflexive_pull_backs}. On the other hand, since $\omega_{X'/Y'}^{[r]}(r \Delta')$ is defined as a pushforward of a line bundle from relative codimension one, it is reflexive by \cite[Corollary 3.7]{Hassett_Kovacs_Reflexive_pull_backs}. Therefore by \cite[Proposition 3.6]{Hassett_Kovacs_Reflexive_pull_backs}, $\pi^* \omega_{X/Y}^{[r]}(r \Delta)$ and $\omega_{X'/Y'}^{[r]}(r \Delta')$ are isomorphic everywhere. In particular, then $\omega_{X'/Y'}^{[r]}(r \Delta')$ is a line bundle.
\end{proof}

\begin{proof}[Proof of Theorem \ref{thm:relative_canonical_nef_intro}]
Choose any curve $C$ on $X$. We are supposed to prove that $\deg \left. \omega_{X/Y}^{[r]}(r \Delta) \right|_C \geq 0$. If $C$ is vertical this is immediate since $\omega_{X/Y}^{[r]}(r \Delta)$ is assumed to be $f$-nef. Otherwise, let $Y'$ be the normalization of $C$. It is enough to prove that $\deg \left. \omega_{X/Y}^{[r]}(r \Delta) \right|_{Y'} \geq 0$. Let $\tau : Y' \to Y$ be the induced morphism. Then, we are in the situation of Lemmma \ref{lem:pullback_results}. Using the notation introduced there, it is enough to prove that $\pi^* \omega_{X/Y}^{[r]}(r \Delta)$ is nef. However, according to Lemma \ref{lem:pullback_results}, this is the same as proving that $\omega_{X'/Y'}^{[r]}(r \Delta')$ is nef.  But, $f' : (X', \Delta') \to Y'$ satisfy all assumptions of Theorem \ref{thm:relative_canonical_nef}. Therefore, $ \omega_{X'/Y'}^{[r]}(r \Delta')$ is nef indeed. 
\end{proof}

\begin{thm}
\label{thm:pushforward_nef}
In the situation of Notation \ref{notation:semi_positivity}, assume that $(X_0,\Delta_0)$ is sharply $F$-pure  and $K_{X/Y}+\Delta$ is $f$-ample. Further choose an $M>0$ such that for all $i> 0$ and $m \geq M$,
\begin{equation}
\label{eq:pushforward_nef:assumption_relative}
R^i f_* (\sO_X(mr(K_{X/Y}  + \Delta)))=0 \textrm{ and}
\end{equation}
\begin{equation}
\label{eq:pushforward_nef:assumption_central_fiber}
  H^0(X_0, \sO_{X_0}(mr(K_{X/Y}  + \Delta) )) = S^0(X_0, \sigma(X_0,\Delta_0) \otimes \sO_{X_0}(mr(K_{X/Y}  + \Delta) )) .
\end{equation}
Then  $f_* (\sO_X(mr(K_{X/Y} + \Delta)))$ is a nef vector bundle for every $m \geq M$. 
\end{thm}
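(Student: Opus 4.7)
The plan is to derive this theorem as a direct application of Proposition \ref{prop:semi_positive} to the Cartier divisor $N := mr(K_{X/Y}+\Delta)$, which is indeed Cartier by the definition of $r = \ind(K_X+\Delta)$. The nefness hypothesis of that proposition will be supplied by the already-established Theorem \ref{thm:relative_canonical_nef}.

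First I would invoke Theorem \ref{thm:relative_canonical_nef} to conclude that $K_{X/Y}+\Delta$ itself is nef. Its hypotheses all hold in our setting: $(X_0,\Delta_0)$ is sharply $F$-pure by assumption, $K_{X/Y}+\Delta$ is $f$-nef since it is assumed $f$-ample, and $K_{X_y}+\Delta_y$ is semi-ample (indeed ample) on every fiber, in particular on the generic one.

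Next, setting $\sN := \sO_X(N)$, I would verify the four hypotheses of Proposition \ref{prop:semi_positive}. Condition \eqref{itm:semi_positive:vanishing} is precisely the assumption \eqref{eq:pushforward_nef:assumption_relative}, and condition \eqref{itm:semi_positive:H_0_equals_S_0} is exactly \eqref{eq:pushforward_nef:assumption_central_fiber}. After enlarging $M$ if necessary so that $mr \geq 2$, the $\bQ$-divisor
\[
N - K_{X/Y} - \Delta \;=\; (mr-1)(K_{X/Y}+\Delta)
\]
is $f$-ample (condition \eqref{itm:semi_positive:ample_on_fibers}) as a positive multiple of the $f$-ample divisor $K_{X/Y}+\Delta$, and is nef (condition \eqref{itm:semi_positive:nef}) by the conclusion of the previous paragraph. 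Proposition \ref{prop:semi_positive} then immediately yields that $f_* \sN = f_*(\sO_X(mr(K_{X/Y}+\Delta)))$ is a nef vector bundle.

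The substantive work --- producing the nefness of $N - K_{X/Y} - \Delta$ --- has already been carried out in Theorem \ref{thm:relative_canonical_nef}, so this result is essentially a packaging of the earlier material via Proposition \ref{prop:semi_positive}; the only point to watch is the minor book-keeping that $M$ be large enough to force $mr - 1 > 0$, which costs nothing since we are always free to increase $M$.
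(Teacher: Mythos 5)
Your proposal is correct and follows essentially the same route as the paper: the paper's own proof likewise sets $N = mr(K_{X/Y}+\Delta)$, feeds the two displayed hypotheses directly into conditions \eqref{itm:semi_positive:vanishing} and \eqref{itm:semi_positive:H_0_equals_S_0} of Proposition \ref{prop:semi_positive}, and obtains the nefness of $N - K_{X/Y} - \Delta = (mr-1)(K_{X/Y}+\Delta)$ from Theorem \ref{thm:relative_canonical_nef}. Your only deviation, the remark about enlarging $M$ to force $mr\geq 2$, concerns a degenerate corner case ($m=r=1$) that the paper's proof silently assumes away as well, so it does not change the substance of the argument.
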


\begin{proof} 
Let $N:= mr(K_{X/Y} + \Delta)$ for some $m \geq M$, and $\sN:=\sO_X(N)$. Then the assumptions of Proposition \ref{prop:semi_positive} for this $N$ are satisfied:
\begin{itemize}
\item  $R^i f_* \sN=0$ for all $i>0$ by \eqref{eq:pushforward_nef:assumption_relative},
\item $N - K_{X/Y} - \Delta$ is an $f$-ample $\bQ$-divisor,
\item $H^0(X_0,\sN)= S^0(X_0, \sigma(X_0, \Delta_0) \otimes \sN|_{X_0})$ by \eqref{eq:pushforward_nef:assumption_central_fiber},
\item  $N - K_{X/Y} - \Delta$ is nef by the nefness of $K_{X/Y} + \Delta$ granted by Theorem \ref{thm:relative_canonical_nef}.
\end{itemize}
Therefore, Proposition \ref{prop:semi_positive} applies indeed and hence $f_* \sN = f_* \sO_X(mr(K_{X/Y} + \Delta))$ is a nef vector bundle for all $m \geq M$. This finishes our proof.
 
\end{proof}

\begin{rem}
Note that by Theorem \ref{thm:relative_Fujita} and Corollary \ref{cor:Fujita_type_S_0_equals_H_0}, there is an $M$ as in the statement of Theorem \ref{thm:pushforward_nef}. 
\end{rem}

\begin{proof}[Proof of Theorem \ref{thm:pushforward_nef_intro}]
First, using Theorem \ref{thm:relative_Fujita} and Proposition \ref{prop:H_0_equals_S_0_relative}, choose an $M>0$ such that  for all $m  \geq M$, $i>0$ and $y \in Y$,  
\begin{equation}
\label{eq:pushforward_nef_intro:assumption_relative}
R^i f_* (\omega_{X/Y}^{[mr]}(mr \Delta))=0 \textrm{ and}
\end{equation}
\begin{equation}
\label{eq:pushforward_nef_intro:assumption_central_fiber}
  H^0(X_y, \sO_{X_y}(mr(K_{X_y}  + \Delta_y) )) = S^0(X_y, \sigma(X_y,\Delta_y) \otimes \sO_{X_y}(mr(K_{X_y}  + \Delta_y) )) .
\end{equation}
Choose now any finite map $\tau : Y' \to Y$ from a smooth, projective curve. We are supposed to prove that $\tau^* f_*( \omega_{X/Y}^{[mr]}(mr \Delta))$ is nef for all $m \geq M$. By Lemma \ref{lem:cohomology_and_base_change_pushforward_zero}, $f_*( \omega_{X/Y}^{[mr]}(mr \Delta))$ commutes with base change for every $m \geq M$.
I.e., using the notations of Lemma \ref{lem:pullback_results}, for every $m \geq M$,
\begin{equation*}
\tau^* f_*( \omega_{X/Y}^{[mr]}(mr \Delta)) \cong  f'_*(  \pi^* \omega_{X/Y}^{[mr]}(mr \Delta)).
\end{equation*}
Therefore, it is enough to prove that the latter is nef. However, again by Lemma \ref{lem:pullback_results}, this is equivalent to proving that $f'_* \omega_{X'/Y'}^{[mr]}(mr \Delta')$ is nef for all $m \geq M$, which is exactly the statement of Theorem \ref{thm:pushforward_nef}. 
\end{proof}

\begin{proof}[Proof of Corollary \ref{cor:ample}]
By Theorem \ref{thm:pushforward_nef_intro} there is a $i_0 >0$ such that for all $i  \geq i_0$, $f_* \omega_{X/Y}^{[ir]}$ is a nef vector bundle. By possibly increasing $i_0$ we may also assume that there is a $d_0 >0$ such that for all $d \geq d_0$,  for all $i \geq i_0$ and all $y \in Y$,
\begin{enumerate}
\item the formation of $f_* \omega_{X/Y}^{[ir]}$ commutes with base-change,
\item $i_0 r K_{X_y}$ is very ample,
\item $\xi^d_y : S^d H^0(X_y, \omega_{X_y}^{[i_0r]}) \to H^0(X, \omega_X^{[di_0r]})$ is surjective and
\item \label{itm:ample:ideal_globally_generated} $\Ker \xi^d_{y}$, which can be identified with $H^0(X_y, \sI_y(d))$ for the ideal $\sI_y$  of the embedding $X_y \hookrightarrow \bP^{h^0 \left( X_y, \omega_{X_y}^{[i_0r]} \right)-1}$, globally generates $\sI_y(d)$ for all $y \in Y$.
\end{enumerate}
Consider then the surjective map
\begin{equation}
\label{eq:ample:surjection}
 S^d \left( f_* \omega_{X/Y}^{[i_0 r]} \right) \twoheadrightarrow f_* \omega_{X/Y}^{[di_0 r]}. 
\end{equation}
The fiber of the kernel of this map at $y$ is $\Ker \xi^d_y$. Hence using assumption \eqref{itm:ample:ideal_globally_generated}, the fiber of the ``classifying map'' of \cite[3.9 Ampleness lemma]{Kollar_Projectivity_of_complete_moduli} associated to the surjection \eqref{eq:ample:surjection} are the sets $\{y' \in Y| X_{y'} \cong X_y\}$. By the assumption of the corollary these sets and hence the fibers of the classifying map are finite. In particular then by \cite[3.9 Ampleness lemma]{Kollar_Projectivity_of_complete_moduli}, using the finiteness of the automorphism groups,  $\det \left(f_* \omega_{X/Y}^{[di_0 r]} \right)$ is ample for all $d \geq d_0$.
\end{proof}

\subsection{Semi-positivity when the relative log-canonical divisor is relatively nef}
\label{sec:semi_positivity_nef}

\begin{thm}
\label{thm:relative_canonical_nef2}
In the situation of Notation \ref{notation:semi_positivity}, if $\ind(X,\Delta)=1$, $X_0$ is sharply $F$-pure  and $K_{X/Y}+\Delta$ is $f$-nef, then $K_{X/Y} + \Delta$ is nef.
\end{thm}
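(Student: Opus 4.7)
The plan is to establish nefness of $K_{X/Y}+\Delta$ by a direct induction, avoiding both the number-theoretic Lemma \ref{lem:number_theory} and the contradiction argument used in Theorem \ref{thm:relative_canonical_nef}. The key simplification granted by $\ind(K_X+\Delta)=1$ is that $q(K_{X/Y}+\Delta)$ is an honest Cartier divisor for every $q\in\bZ^{>0}$, so no denominators ever need to be cleared; consequently the semi-ampleness of $K_{X_y}+\Delta_y$, which was used only to move everything inside the nef cone via Lemma \ref{lem:relatively_semi_ample}, becomes unnecessary.

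First I would invoke Theorem \ref{thm:relative_Fujita}, Corollary \ref{cor:Fujita_type_S_0_equals_H_0}, and Proposition \ref{prop:Fujita_type_global_generation} to fix a nef and $f$-ample Cartier divisor $L$ on $X$ such that for every nef Cartier divisor $M$ on $X$ the following hold: $R^i f_*\sO_X(L+M)=0$ for all $i>0$; $H^0(X_0,\sO_{X_0}(L+M))=S^0(X_0,\sigma(X_0,\Delta_0)\otimes\sO_{X_0}(L+M))$; and $\sO_{X_y}(L+M)$ is globally generated for every $y\in Y$. (Concretely, one takes $L$ to be a high multiple of a fixed very ample divisor.)

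Then I would prove by induction on $q\geq 0$ that $N_q:=q(K_{X/Y}+\Delta)+L$ is nef. The base case $q=0$ holds by the choice of $L$. For the inductive step, assuming $N_{q-1}$ is nef, I would verify the hypotheses of Proposition \ref{prop:nef} for $\sN:=\sO_X(N_q)$: the divisor $N_q-K_{X/Y}-\Delta=N_{q-1}$ is $f$-ample (since $K_{X/Y}+\Delta$ is $f$-nef and $L$ is $f$-ample) and nef (by the inductive hypothesis); $N_q$ is itself $f$-nef as the sum of an $f$-ample and an $f$-nef divisor; and the vanishing, the $S^0=H^0$ identification on $X_0$, and the fibrewise global generation all follow from the uniform choice of $L$ applied to the nef divisor $M:=q(K_{X/Y}+\Delta)+L-L$ (more precisely, by applying the three properties to the nef Cartier divisor $N_{q-1}+(q-1)(K_{X/Y}+\Delta)$ or equivalently to the $f$-nef part of $N_q-L$). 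Thus Proposition \ref{prop:nef} yields that $N_q$ is nef, completing the induction. Finally, for any irreducible curve $C\subseteq X$ the nefness of every $N_q$ gives $(K_{X/Y}+\Delta)\cdot C+\tfrac{1}{q}\,L\cdot C\geq 0$; letting $q\to\infty$ produces $(K_{X/Y}+\Delta)\cdot C\geq 0$, so $K_{X/Y}+\Delta$ is nef.

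The main obstacle is the bookkeeping at the inductive step: I need a single $L$ that simultaneously witnesses the Fujita vanishing, the $S^0=H^0$ equality on the central fibre $X_0$, and the fibrewise global generation for the entire infinite family of nef twists $q(K_{X/Y}+\Delta)$, $q\geq 0$. This is exactly the uniform (in nef twists) statements provided by Theorem \ref{thm:relative_Fujita}, Corollary \ref{cor:Fujita_type_S_0_equals_H_0}, and Proposition \ref{prop:Fujita_type_global_generation}; without such uniformity the induction would break because the choice of $L$ would depend on $q$.
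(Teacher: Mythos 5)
Your proposal is correct and is essentially the paper's own proof of Theorem \ref{thm:relative_canonical_nef2}: fix one divisor $L$ with uniform Fujita-type vanishing, $S^0=H^0$ on $X_0$, and fibrewise global generation, then induct on $q$ to show $q(K_{X/Y}+\Delta)+L$ is nef via Proposition \ref{prop:nef}, and let $q\to\infty$ (the number-theoretic/contradiction machinery belongs only to Theorem \ref{thm:relative_canonical_nef}). The one point to fix in your write-up is that the uniform statements must be taken with respect to $f$-nef twists rather than nef twists, since the twist $q(K_{X/Y}+\Delta)=N_q-L$ is only known to be $f$-nef during the induction; this is exactly how Theorem \ref{thm:relative_Fujita}, Corollary \ref{cor:Fujita_type_S_0_equals_H_0} (applied on $X_0$, where $f$-nef restricts to nef), and Proposition \ref{prop:Fujita_type_global_generation} are used in the paper.
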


\begin{proof}
Using Theorem \ref{thm:relative_Fujita}, Corollary \ref{cor:Fujita_type_S_0_equals_H_0} and Proposition \ref{prop:Fujita_type_global_generation}, there is an ample enough line bundle $\sL$ on $X$, such that for all $i>0$  and f-nef line bundle $\sK$, 
\begin{equation}
\label{eq:relative_canonical_nef2:assumption_relative}
R^i f_* ( \sL \otimes \sK)=0,
\end{equation}
\begin{equation}
\label{eq:relative_canonical_nef2:assumption_central_fiber}
  H^0(X_0, \sL \otimes \sK) = S^0(X_0, \sigma(X_0,\Delta_0) \otimes (\sL \otimes \sK)|_{X_0}) 
\end{equation}
and
\begin{equation}
\label{eq:relative_canonical_nef2:global_generation}
\sL \otimes \sK |_{X_y} \textrm{ is globally generated for all } y \in Y . 
\end{equation}

Let $L$ be a divisor of $\sL$. We prove by induction that $q(K_{X/Y} + \Delta) + L$ is nef for all $q>0$. For $q=0$ the statement is true by the choice of $L$. Hence, we may assume that we $(q-1)(K_{X/Y} + \Delta) + L$ is nef. Now, we verify that the conditions of Proposition \ref{prop:nef} hold for $N:=q(K_{X/Y} + \Delta) + L$ and $\sN:= \so_X(N)$. Indeed:
\begin{itemize}
\item $N$ is Cartier by the index assumption,
\item $R^i f_* \sN = 0 $ for all $i>0$ because of \eqref{eq:relative_canonical_nef2:assumption_relative} and that $K_{X/Y} + \Delta$ is an $f$-nef Cartier divisor,
\item furthermore, the $\bQ$-divisor
\begin{equation*}
 N- K_{X/Y} -  \Delta = (q-1)(K_{X/Y} + \Delta) + L
\end{equation*}
is not only $f$-ample, but also nef by the inductional hypothesis,
\item using the $f$-nefness of $K_{X/Y} + \Delta$ and \eqref{eq:relative_canonical_nef2:assumption_central_fiber}, 
\begin{equation*}
H^0(X_0, \sN) = S^0(X_0, \sigma(X_0, \Delta_0) \otimes  \sN|_{X_0}),
\end{equation*}
\item since all the summands of $N$ are $f$-nef, so is $N$,
\item for every $y \in Y$,  $N|_{X_y}$ is globally generated by  \eqref{eq:relative_canonical_nef2:global_generation}. 
\end{itemize}
Hence Proposition \ref{prop:nef} implies that $N$ is nef. This finishes our inductional step, and hence the proof of the nefness of $q(K_{X/Y} + \Delta) + L$ for every $q>0$. However, then $ K_{X/Y} + \Delta$ has to be nef as well. This concludes our proof.
\end{proof}

\begin{proof}[Proof of Theorem \ref{thm:relative_canonical_nef_2_intro}]
The proof is identical to that of Theorem \ref{thm:relative_canonical_nef2} after setting  $r:=1$ and using Theorem \ref{thm:relative_canonical_nef2} instead of Theorem \ref{thm:relative_canonical_nef}.
\end{proof}

\subsection{The case of indices divisible by $p$}

Given a pair $(X,\Delta)$ with $p | \ind(K_X + \Delta)$, one can perturb $\Delta$ carefully to obtain another pair $(X,\Delta')$ such that $p \nmid \ind(K_X + \Delta')$. This method can be used to move some of our results to the situation where the index of the log-canonical divisor is divisible by $p$. There is one price to be paid: since the perturbed pair has to still satisfy the adequate sharply $F$-pure assumptions, slightly stronger singularity assumptions have to be imposed on the original pair. The adequate class of singularities is strongly $F$-regular singularities, positive characteristic analogues of Kawamata log terminal singularities. These, contrary to sharply $F$-pure singularities are closed under small perturbations, and furthermore form a subset of sharply $F$-pure singularities. In particular, their perturbations are guaranteed to be sharply $F$-pure. For the definition of strongly $F$-regular singularities we refer to \cite[Definition 2.10]{Schwede_A_canonical_linear_system}. Here, we 
only use the property that given a Cartier divisor $A \geq 0$ and a strongly $F$-regular pair $(X,\Delta)$,  $(X, \Delta + \varepsilon A)$ is strongly $F$-regular as well, for every $0 < \varepsilon \ll 1$.

\begin{lem}
\label{lem:index_divisibility_removal}
Let $(X,\Delta)$ be a pair with a flat morphism to a Gorenstein scheme $Y$, such that $K_X + \Delta$ is $\bQ$-Cartier, but $p | \ind(K_X + \Delta)$. Choose also an effective integer divisor $D$ which is linearly equivalent to $K_{X/Y} + A$ for some Cartier divisor $A$ on $X$. Then $K_X +  \Delta + \frac{1}{p^v -1}(D + \Delta)$ is $\bQ$-Cartier with index not divisible by $p$ for every $v \gg 0$.
\end{lem}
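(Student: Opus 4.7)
The plan is to write $r = \ind(K_X + \Delta) = p^a s$ with $a \ge 1$ and $\gcd(p,s) = 1$, and then to exhibit a Cartier multiple of $K_X + \Delta' := K_X + \Delta + \tfrac{1}{p^v-1}(D + \Delta)$ whose multiplier is coprime to $p$. Specifically, I will show that for every $v \ge a$, the divisor $s(p^v - 1)(K_X + \Delta')$ is Cartier, which forces $\ind(K_X + \Delta') \mid s(p^v-1)$ and therefore $p \nmid \ind(K_X + \Delta')$ since $\gcd(p, s(p^v-1)) = 1$.

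First I would carry out the basic manipulation
\begin{equation*}
(p^v - 1)(K_X + \Delta') = (p^v - 1)(K_X + \Delta) + D + \Delta = p^v (K_X + \Delta) + (D - K_X).
\end{equation*}
Since $Y$ is Gorenstein, $K_Y$ is Cartier, so $K_{X/Y} = K_X - f^* K_Y$ differs from $K_X$ by a Cartier divisor. Hence the hypothesis $D \sim K_{X/Y} + A$ with $A$ Cartier gives that $D - K_X \sim A - f^*K_Y$ is Cartier. Multiplying by $s$ yields
\begin{equation*}
s(p^v - 1)(K_X + \Delta') \sim s p^v (K_X + \Delta) + s(A - f^* K_Y).
\end{equation*}

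The second summand is plainly Cartier, so it remains to check that $s p^v (K_X + \Delta)$ is Cartier. This follows because $r = p^a s$ divides $s p^v$ as soon as $v \ge a$, and any integer multiple of $r(K_X + \Delta)$ is Cartier by the definition of the index. Consequently $s(p^v - 1)(K_X + \Delta')$ is Cartier for all $v \ge a$, completing the argument.

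There is no real obstacle here; the only mildly delicate point is keeping track of what is a Weil equivalence and what is genuinely Cartier, in particular using Gorenstein-ness of $Y$ to turn the relative canonical into the absolute one modulo a Cartier divisor. Note also $\Delta' \ge 0$ automatically, since $\Delta, D \ge 0$ and $\tfrac{1}{p^v - 1} > 0$, so $(X, \Delta')$ is a valid pair in the sense of Definition \ref{defn:pair}.
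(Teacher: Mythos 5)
Your proof is correct and follows essentially the same route as the paper's: multiply $K_X+\Delta'$ by the prime-to-$p$ part of the index times $(p^v-1)$, use $D\sim K_{X/Y}+A$ (and Gorenstein-ness of $Y$) to rewrite the result as an integer multiple of $r(K_X+\Delta)$ plus a Cartier divisor, and conclude the new index divides $s(p^v-1)$. The only difference is cosmetic bookkeeping — you write $r=p^as$ and get the explicit bound $v\ge a$, whereas the paper uses the factor $\tfrac{r}{(r,p^v)}$ and takes $v\gg 0$.
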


\begin{proof}
First, since $Y$ is Gorenstein, it is enough to show that $K_{X/Y} +  \Delta + \frac{1}{p^v -1}(D + \Delta)$ $\bQ$-Cartier with index not divisible by $p$ for every $v \gg 0$. For proving that, we may choose for $K_{X/Y}$ the representative $D - A$. Let $r$ be an integer such that $r(K_{X/Y} + \Delta)$ is Cartier. Then
\begin{align*}
\mathclap{\frac{r}{(r,p^v)}(p^v -1) \left(K_{X/Y} +\Delta + \frac{1}{p^v -1}(D + \Delta) \right)}   \\ 
\hspace{100pt} &= \frac{r}{(r,p^v)}(p^v -1) \left( D - A + \Delta + \frac{1}{p^v-1}(D + \Delta) \right) 
\\ & =  r \frac{p^v}{(r,p^v)} (D + \Delta) - \frac{r}{(r,p^v)}(p^v -1) A 
\\ & \sim  r \frac{p^v}{(r,p^v)}  (K_{X/Y} +  \Delta)  + \left( r \frac{p^v}{(r,p^v)}- \frac{r}{(r,p^v)}(p^v -1) \right) A,
\end{align*}
which is Cartier. Furthermore, for $v \gg 0$, $\frac{r}{(r,p^v)}(p^v -1)$ is an integer not divisible by $p$. This concludes our proof.
\end{proof}

\begin{thm}
\label{thm:index_p_divisible_nef}
In the situation of Notation \ref{notation:semi_positivity} by possibly allowing $p|\ind(K_X + \Delta)$, if $(X_0, \Delta_0)$ is strongly $F$-regular, $K_{X/Y}+\Delta$ is $f$-nef and  $K_{X_y} + \Delta_y$ is semi-ample for generic $y \in Y$, then $K_{X/Y} + \Delta$ is nef. 
\end{thm}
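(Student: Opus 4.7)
The plan is to reduce to Theorem \ref{thm:relative_canonical_nef} by a small perturbation of $\Delta$ that eliminates the $p$-divisibility of the index, exploiting the openness of strong $F$-regularity under small effective perturbations. I will first choose a sufficiently $f$-ample Cartier divisor $A$ on $X$ (for example $A = f^*B + H$ with $B$ ample on $Y$ and $H$ an $f$-ample Cartier divisor, both chosen positive enough) so that $|K_{X/Y} + A|$ contains an effective divisor $D$ whose support, by a Bertini-type generality argument on $X_0$, avoids the codimension $0$ and singular codimension $1$ points of $X_0$. For each $v \in \bZ^{>0}$, define
\[
\Delta_v := \Delta + \tfrac{1}{p^v - 1}(D + \Delta),
\]
so that by Lemma \ref{lem:index_divisibility_removal}, $K_X + \Delta_v$ is $\bQ$-Cartier with index not divisible by $p$ for all $v$ sufficiently large. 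Since $D \sim K_{X/Y} + A$, a direct computation gives the key identity
\[
K_{X/Y} + \Delta_v \sim_{\bQ} \tfrac{p^v}{p^v-1}(K_{X/Y} + \Delta) + \tfrac{1}{p^v-1} A.
\]

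From this identity, $K_{X/Y} + \Delta_v$ is $f$-ample (hence $f$-nef), and $K_{X_y} + (\Delta_v)_y$ is ample (hence semi-ample) for every $y \in Y$. The coefficient $\tfrac{1}{p^v-1}$ of the effective perturbation $D + \Delta$ tends to $0$ as $v \to \infty$, so by the openness property of strong $F$-regularity recalled in the paragraph preceding Lemma \ref{lem:index_divisibility_removal}, the pair $(X_0, (\Delta_v)_0) = (X_0, \Delta_0 + \tfrac{1}{p^v-1}(D|_{X_0} + \Delta_0))$ remains strongly $F$-regular, and in particular sharply $F$-pure, for all $v \gg 0$; the support-avoidance condition for $\Delta_v$ at $X_0$ is preserved by the general choice of $D$. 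Thus $f : (X, \Delta_v) \to Y$ satisfies every hypothesis of Theorem \ref{thm:relative_canonical_nef}, yielding that $K_{X/Y} + \Delta_v$ is nef for every such $v$. For any irreducible curve $C \subset X$,
\[
(K_{X/Y} + \Delta_v)\cdot C = (K_{X/Y}+\Delta)\cdot C + \tfrac{1}{p^v-1}(D+\Delta)\cdot C \geq 0,
\]
and passing to the limit $v \to \infty$ gives $(K_{X/Y}+\Delta)\cdot C \geq 0$, which establishes the nefness of $K_{X/Y}+\Delta$.

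The most delicate point will be the simultaneous control of the change of index and the preservation of strong $F$-regularity: although $K_{X_0} + \Delta_0$ may have $p$-divisible index, the perturbed divisor $K_{X_0} + (\Delta_v)_0$ has index coprime to $p$ by Lemma \ref{lem:index_divisibility_removal} together with the Gorensteinness of $Y$, which is precisely what is required for the hypotheses of Theorem \ref{thm:relative_canonical_nef} to apply to $(X, \Delta_v)$. Arranging that $D$ be effective globally on $X$ (not merely in a neighborhood of $X_0$) and general enough to satisfy all support-avoidance conventions is a routine matter once $A$ is taken sufficiently positive.
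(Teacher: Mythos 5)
Your proposal is correct and follows essentially the same route as the paper: perturb $\Delta$ to $\Delta_v = \Delta + \tfrac{1}{p^v-1}(D+\Delta)$ with $D \sim K_{X/Y}+A$ effective and general, invoke Lemma \ref{lem:index_divisibility_removal} to make the index prime to $p$, use openness of strong $F$-regularity to keep $(X_0,(\Delta_v)_0)$ sharply $F$-pure, apply Theorem \ref{thm:relative_canonical_nef}, and let $v \to \infty$. The only cosmetic difference is the final step, where you intersect with curves and pass to the limit while the paper rescales to conclude that $(K_{X/Y}+\Delta)+\tfrac{1}{p^v}A$ is nef for all $v \gg 0$; these are equivalent.
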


\begin{proof}
Fix an ample integer Cartier divisor $A$ on $X$ such that there is an effective integer divisor $D$ linearly equivalent to  $A+ K_{X/Y}$ and furthermore $D$ avoids the codimension 0 and the singular codimension 1 points of $X_0$ as well as the singular codimension 1 points of $X$.  Define for $v \gg 0$, $\Delta':= \Delta + \frac{1}{p^v -1}(D + \Delta)$. Then if we replace $\Delta$ by $\Delta'$, the assumptions of Notation \ref{notation:semi_positivity} are still satisfied for every $v \gg 0$, even $p \nmid \ind(K_X + \Delta')$  by Lemma \ref{lem:index_divisibility_removal}. Furthermore, since $(X_0, \Delta_0)$ was strongly $F$-regular and  $\Delta'_0:= \Delta'|_{X_0}$ differs from $\Delta_0$ in a small enough divisor, $(X_0,\Delta_0')$ is sharply $F$-pure for every $v \gg 0$. 
Also, the $f$-nefness and the semi-ampleness assumptions  hold for $K_{X/Y} + \Delta'$, since 
\begin{equation}
\label{eq:index_p_divisible_nef_and_big:proportional}
K_{X/Y} + \Delta' \sim_{\bQ} \frac{p^v}{p^v-1} (K_{X/Y} + \Delta) + \frac{1}{p^v-1} A .
\end{equation} 
Hence, Theorem \ref{thm:relative_canonical_nef} implies that $K_{X/Y} + \Delta'$ is nef. However then using  \eqref{eq:index_p_divisible_nef_and_big:proportional}, so is 
\begin{equation*}
\frac{p^v -1}{p^v} \left( \frac{p^v}{p^v-1} (K_{X/Y} + \Delta) + \frac{1}{p^v-1} A \right) = (K_{X/Y} + \Delta) + \frac{1}{p^v} A .
\end{equation*}
Since this holds for all $v \gg 0 $, $K_{X/Y} + \Delta$ is nef. 


\end{proof}

\begin{lem}
\label{lem:S_0_monotone}
Let $(X,\Delta)$ and $(X,\Delta')$ be two pairs with the same underlying spaces, such that both $K_X + \Delta$ and $K_X + \Delta'$ are $\bQ$-Cartier with indices not divisble by $p$. Assume furthermore that $\Delta \leq \Delta'$. Then for any line bundle $\sL$ on $X$,
\begin{equation*}
S^0(X, \sigma(X,\Delta) \otimes \sL) \supseteq S^0(X, \sigma(X,\Delta') \otimes \sL) .
\end{equation*}
\end{lem}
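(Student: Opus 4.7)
The plan is to use Proposition \ref{prop:S_0_F_pure_ideal} to rewrite both invariants as intersections of images in $H^0(X,\sL)$, and then produce a compatible inclusion of the underlying sheaves that realises these images. First, let $g := g(\Delta)$ and $g' := g(\Delta')$ be the integers of Notation \ref{notation:Grothendieck_trace_Frobenius}, and let $h$ be a common multiple of $g$ and $g'$. By Proposition \ref{prop:decreasing_chain_non_F_pure} the images $\mathrm{im}(H^0(F^{e}_*\sL_{e,\Delta}\otimes\sL) \to H^0(X,\sL))$ form a decreasing chain as $e$ ranges over multiples of $g$, and similarly for $\Delta'$. Since multiples of $h$ are cofinal in both sequences, we may, without loss of generality, compute both $S^0$'s by intersecting only over $e$ a multiple of $h$.

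Next I would exploit the hypothesis $\Delta \leq \Delta'$: for every $e \geq 1$ the coefficient $1-p^{eh}$ is strictly negative, so $(1-p^{eh})(K_X + \Delta') \leq (1-p^{eh})(K_X + \Delta)$, which produces a natural inclusion of Weil divisorial sheaves $\sL_{eh,\Delta'} \hookrightarrow \sL_{eh,\Delta}$. The key compatibility is that the trace $\phi^{eh}_{\Delta'}$ is simply the restriction of $\phi^{eh}_\Delta$ along this inclusion; indeed, both maps are obtained from the twisted Grothendieck trace $F^{eh}_*\sO_X((1-p^{eh})K_X) \to \sO_X$ by precomposing with the factorisation
\begin{equation*}
F^{eh}_*\sL_{eh,\Delta'} \hookrightarrow F^{eh}_*\sL_{eh,\Delta} \hookrightarrow F^{eh}_*\sO_X((1-p^{eh})K_X),
\end{equation*}
so the outer composition coincides with $\phi^{eh}_{\Delta'}$ and the inner composition with $\phi^{eh}_\Delta$ (up to the canonical unique extension from the Gorenstein locus, which is unaffected because the inclusions are already defined everywhere).

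Tensoring with $\sL$ and applying $H^0$ yields a commutative triangle showing that the image corresponding to $\Delta'$ factors through the image corresponding to $\Delta$, so
\begin{equation*}
\mathrm{im}\big(H^0(X, F^{eh}_*\sL_{eh,\Delta'}\otimes\sL) \to H^0(X,\sL)\big) \subseteq \mathrm{im}\big(H^0(X, F^{eh}_*\sL_{eh,\Delta}\otimes\sL) \to H^0(X,\sL)\big)
\end{equation*}
for every $e$. Intersecting over all $e \geq 0$ gives the desired inclusion. There is no real obstacle: the argument is essentially bookkeeping, the only subtlety being the cofinality reduction in the first step so that the two intersections are actually indexed by the same set of Frobenius powers.
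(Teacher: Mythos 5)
Your proof is correct and follows essentially the same route as the paper: the inequality $\Delta \leq \Delta'$ gives the inclusion $\sL_{e,\Delta'} \subseteq \sL_{e,\Delta}$ through which the trace for $\Delta'$ factors, so each image for $\Delta'$ lies in the corresponding image for $\Delta$, and intersecting yields the containment of the $S^0$'s. The only difference is that you spell out the cofinality of common multiples of $g$ and $g'$, a bookkeeping point the paper handles implicitly by working with those $e$ for which both $(p^e-1)(K_X+\Delta)$ and $(p^e-1)(K_X+\Delta')$ are Cartier.
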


\begin{proof}
For any integer $e>0$ for which both $(p^e-1)(K_X +\Delta)$ and $(p^e-1)(K_X +\Delta')$ are Cartier, let $\phi_{e,\Delta}$ (resp. $\phi_{e,\Delta'}$) denote the morphism $H^0(X,F^e_* \sL_{e,\Delta} \otimes \sL ) \to H^0(X, \sL)$ (resp. $H^0(X,F^e_* \sL_{e,\Delta'} \otimes \sL ) \to H^0(X, \sL)$) of Definition \ref{defn:S_0_F_pure_ideal}. It is enough to prove that for every $e>0$ as above, $\im \phi_{e,\Delta} \supseteq \im \phi_{e, \Delta'}$. However, that is immediate from the following factorization implied by  $(p^e-1)(K_X +\Delta) \leq (p^e-1)(K_X +\Delta')$.
\begin{equation*}
\xymatrix{
 \sL_{e, \Delta'} \cong \sO_X((1-p^e)(K_X + \Delta')) \ar[r] \ar@/^2pc/[rr]^{\phi_{e,\Delta'}} & \sL_{e, \Delta} \cong \sO_X((1-p^e)(K_X + \Delta)) \ar[r]_>>{\phi_{e,\Delta}} & \sO_X
}
\end{equation*}

\end{proof}

\begin{thm}
\label{thm:index_p_divisible_pushforward}
In the situation of Notation \ref{notation:semi_positivity} by possibly allowing $p|\ind(K_X + \Delta)$, assume also that $(X_0, \Delta_0)$ is strongly $F$-regular and $K_{X/Y}+\Delta$ is $f$-ample. Then $f_* \sO_X(mr(K_{X/Y} + \Delta))$ is a nef vector bundle for every $m \gg 0$.
\end{thm}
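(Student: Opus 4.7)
The plan is to deduce the theorem from the generic-global-generation engine Proposition \ref{prop:generically_globally_generated} via the index-clearing perturbation used in Theorem \ref{thm:index_p_divisible_nef}. First, Theorem \ref{thm:index_p_divisible_nef} gives that $K_{X/Y}+\Delta$ is nef. Fix an ample Cartier divisor $H$ on $Y$ such that, for $A := f^* H$, the sheaf $\sO_X(K_{X/Y}+A)$ admits an effective section $D$ avoiding the codimension $0$ and singular codimension $1$ points of $X_0$. For $v \gg 0$ set $\Delta'_v := \Delta + \frac{1}{p^v-1}(D+\Delta)$: by Lemma \ref{lem:index_divisibility_removal} we have $p \nmid \ind(K_X + \Delta'_v)$, and by the openness of strong $F$-regularity, $(X_0, \Delta'_{v,0})$ is sharply $F$-pure.

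For each $n\geq 1$, I would apply Proposition \ref{prop:generically_globally_generated} to the fiber power $f^{(n)}\colon(X^{(n)},\Delta'^{(n)}_v)\to Y$ (Notation \ref{notation:product}, whose hypotheses are preserved by Lemma \ref{lem:assumptions_for_product}) with the Cartier divisor
\[
N^{(n)} := Mr(K_{X^{(n)}/Y}+\Delta^{(n)}) + f^{(n)*}E,
\]
for a fixed effective Cartier divisor $E$ of positive degree on $Y$. Pulling the key identity $K_{X/Y}+\Delta'_v \sim_{\bQ} \frac{p^v}{p^v-1}(K_{X/Y}+\Delta) + \frac{1}{p^v-1}f^*H$ back to the product, where $A^{(n)}=\sum p_i^*A = n f^{(n)*}H$, yields
\[
N^{(n)} - K_{X^{(n)}/Y} - \Delta'^{(n)}_v \sim_{\bQ} \Bigl(Mr - 1 - \tfrac{1}{p^v-1}\Bigr)\bigl(K_{X^{(n)}/Y}+\Delta^{(n)}\bigr) + f^{(n)*}\!\Bigl(E - \tfrac{n}{p^v-1}H\Bigr).
\]
Choosing $v = v(n)$ large enough that $\deg E > \frac{n}{p^{v(n)}-1}\deg H$ makes the second summand nef, the first is nef by Theorem \ref{thm:index_p_divisible_nef} and $f^{(n)}$-ample for $M\gg 0$, the cohomology vanishing of $R^i f^{(n)}_*\sO_{X^{(n)}}(N^{(n)})$ follows from Theorem \ref{thm:relative_Fujita} and Lemma \ref{lem:cohomology_and_base_change_product}, and the identity $H^0 = S^0$ on $X_0^{(n)}$ reduces via Lemma \ref{lem:product} to the same identity on $(X_0, \Delta'_{v(n),0})$, which is supplied by Corollary \ref{cor:Fujita_type_S_0_equals_H_0}.

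Proposition \ref{prop:generically_globally_generated} then yields generic global generation of $f^{(n)}_*\sO_{X^{(n)}}(N^{(n)})\otimes \omega_Y(2y_0)$, which by the projection formula and Lemma \ref{lem:cohomology_and_base_change_product} becomes generic global generation of $\bigl(f_*\sO_X(Mr(K_{X/Y}+\Delta))\bigr)^{\otimes n}\otimes \sO_Y(E)\otimes \omega_Y(2y_0)$. Since the twist $\sO_Y(E)\otimes \omega_Y(2y_0)$ is independent of $n$, Lemma \ref{lem:generic_global_generation_nef} will give the nefness of $f_*\sO_X(Mr(K_{X/Y}+\Delta))$. The principal obstacle is ensuring that the threshold $M$ can be chosen uniformly in $n$: because $v(n)$ must grow with $n$ to absorb the $n$-scaling of $A^{(n)}$, the pair $(X_0,\Delta'_{v(n),0})$ keeps changing, so one must upgrade the uniform vanishing of Proposition \ref{prop:uniform_vanishing_for_B_e} to cover the whole sequence $\{\Delta'_{v(n),0}\}$. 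I would handle this by observing that the perturbations $\frac{1}{p^v-1}(D+\Delta)$ lie in a bounded subset of the effective cone, so the required vanishings stabilize after finitely many applications of Fujita-type bounds.
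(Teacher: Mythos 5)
Your overall strategy---perturb $\Delta$ by $\frac{1}{p^v-1}(D+\Delta)$ to clear the $p$-divisibility of the index, then run the fiber-product semi-positivity machinery---is the same as the paper's, and your computation of $N^{(n)}-K_{X^{(n)}/Y}-\Delta'^{(n)}_v$ is correct. But the step you yourself flag as the ``principal obstacle'' is a genuine gap, and your proposed fix does not close it. You need the equality $H^0(X_0,\sO_{X_0}(Mr(K_{X_0}+\Delta_0)))=S^0(X_0,\sigma(X_0,\Delta'_{v(n),0})\otimes\sO_{X_0}(Mr(K_{X_0}+\Delta_0)))$ with a single threshold $M$ valid along the whole sequence $v(n)\to\infty$. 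The uniform statements in the paper, Proposition \ref{prop:uniform_vanishing_for_B_e} and Corollary \ref{cor:Fujita_type_S_0_equals_H_0}, are uniform only over boundaries $\Delta'$ \emph{linearly equivalent} to a fixed $\Delta$: that hypothesis is exactly what makes $\sL_{g,\Delta'}\cong\sL_{g,\Delta}$ and the Quot-scheme boundedness argument work. Your boundaries $\Delta'_v$ have varying coefficients $\frac{1}{p^v-1}$, varying linear equivalence classes, and a varying Cartier period $g=g(v)$, so none of these results apply to the family $\{\Delta'_{v(n),0}\}$, and the claim that the perturbations ``lie in a bounded subset of the effective cone, so the required vanishings stabilize'' is not an argument: the sheaves $\sB^e_{\Delta'_v}$ and the integers $g(v)$ genuinely change with $v$, and no Fujita-type bound in the paper is uniform in this direction.

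The paper's proof closes exactly this hole with Lemma \ref{lem:S_0_monotone}, which you never invoke: fix one $v'$ (one perturbation $\Delta_{n'}$, $n'=p^{v'}-1$), choose $M$ once so that $H^0=S^0$ holds on $X_0$ for the single pair $(X_0,(\Delta_{n'})_0)$; then for every $v\ge v'$ one has $\Delta_n\le\Delta_{n'}$, hence $S^0(X_0,\sigma(X_0,(\Delta_n)_0)\otimes\cdot)\supseteq S^0(X_0,\sigma(X_0,(\Delta_{n'})_0)\otimes\cdot)=H^0$, so the needed equality for the moving perturbation comes for free with the same $M$. This monotonicity is the missing idea; without it (or a genuinely new uniform statement over varying boundaries) your argument does not go through. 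A secondary problem: you take $D$ effective in the linear system of $K_{X/Y}+f^*H$ avoiding the bad points of $X_0$, but this system can be empty, since $K_{X/Y}+f^*H$ has no fiberwise positivity and $H^0(X_y,\omega_{X_y})$ can vanish even for canonically polarized fibers. The paper instead takes $D\sim dr(K_{X/Y}+\Delta)+dA+K_{X/Y}$ with $d\gg 0$, which is positive enough to guarantee such a $D$, and this choice also produces the clean cancellation against the Cartier twist $\frac{d}{n}A^{(n)}=(f^{(n)})^*(dB)$ once $n=p^v-1$ is coupled to the perturbation; your decoupled choice of $E$ and $v(n)$ is harmless in itself, but the existence of your $D$ must be repaired along the paper's lines.
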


\begin{proof}
First, note that by Theorem \ref{thm:index_p_divisible_nef}, $K_{X/Y} + \Delta$ is nef.  Let  $A$ be the pullback of any ample Cartier divisor $B$ from $Y$. Then, by the $f$-ampleness of $K_{X/Y} + \Delta$, $r(K_{X/Y} + \Delta) + A $ is ample. Therefore, there is a $d > 0$, such that $dr(K_{X/Y} + \Delta) + dA + K_{X/Y}$ is linearly equivalent to an effective integer divisor $D$, which avoids the codimension 0 and the singular codimension 1 points of $X_0$ as well as the singular codimension 1 points of $X$. Let $n = p^v -1$ for arbitrary $v \gg 0$ (which notation will be used throughout the proof)  and define $\Delta_n:=\Delta + \frac{1}{n}(D + \Delta)$. By Lemma \ref{lem:index_divisibility_removal}, $(X,\Delta_n)$ satisfy the assumptions of Notation \ref{notation:semi_positivity} for $v \gg 0$, even the divisibility condition on the index. Furthermore the same holds for $(X^{(n)}, \Delta_n^{(n)})$ by Lemma \ref{lem:assumptions_for_product}. 

Fix now a $n'=p^{v'}-1 \gg 0$. Then, since $\Delta_{n'} -\Delta$ is a small effective divisor, $(X_0,(\Delta_{n'})_0)$ is sharply $F$-pure. In particular, by Theorem \ref{thm:relative_Fujita} and Corollary \ref{cor:Fujita_type_S_0_equals_H_0} we may choose an $M>0$ such that for all $i> 0$ and $m \geq M$,
\begin{equation}
\label{eq:index_p_divisible_pushforward:assumption_relative}
R^i f_* (\sO_X(mr(K_{X/Y}  + \Delta)))=0 \textrm{ and}
\end{equation}
\begin{equation}
\label{eq:index_p_divisible_pushforward:assumption_central_fiber}
  H^0(X_0, \sO_{X_0}(mr(K_{X/Y}  + \Delta) )) = S^0 ( X_0, \sigma ( X_0, (\Delta_{n'})_0 ) \otimes \sO_{X_0}(mr(K_{X/Y}  + \Delta) ) ) .
\end{equation}
Define then for any $m \geq \max\{2,M\}$ and $v \geq v'$ (still keeping the notation $n=p^v -1$), $N:= mr \left(K_{X^{(n)}/Y} + \Delta^{(n)} \right) + \frac{d}{n}A^{(n)}$. Then the assumptions of Proposition \ref{prop:semi_positive} hold for $\left(X^{(n)}, \Delta_n^{(n)} \right)$ and $N$, because if $\sN:= \sO_{X^{(n)}}(N)$:
\begin{itemize}
\item $N$ is Cartier, since $\frac{d}{n} A^{(n)} = \left( f^{(n)} \right)^*( dB)$.
\item  $R^i f_*^{(n)} \sN=0$ for all $i>0$ by \eqref{eq:index_p_divisible_pushforward:assumption_relative}, Lemmas \ref{lem:cohomology_and_base_change_pushforward_zero}, \ref{lem:cohomology_and_base_change_fibers_zero} and the K\"unneth formula. We also use here that $\sO_{X^{(n)}}(mr(K_{X^{(n)}/Y} + \Delta^{(n)})) \cong  \sO_X(mr(K_{X/Y} + \Delta))^{(n)}$.
\item $N - \left( K_{X^{(n)}/Y} + \Delta_n^{(n)} \right)$ is an $f^{(n)}$-ample and nef $\bQ$-divisor, because of the following computation and the nefness of $K_{X/Y} + \Delta$ granted by Theorem \ref{thm:index_p_divisible_nef}. Note that we  also use  that $\frac{n+1 + dr}{n} < 2$ because $v \geq v'$ and $v' \gg 0$.
\begin{align*}
\hspace{30pt} \mathrlap{N - \left(K_{X^{(n)}/Y} + \Delta_n^{(n)} \right) }
\\  & =
mr \left(K_{X^{(n)}/Y} + \Delta^{(n)} \right) + \frac{d}{n}A^{(n)} - \left( K_{X^{(n)}/Y} +  \Delta^{(n)} + \frac{1}{n} \left( D^{(n)} + \Delta^{(n)} \right) \right) 
\\ & \sim_{\bQ}
mr \left(K_{X^{(n)}/Y} + \Delta^{(n)}\right) + \frac{d}{n}A^{(n)} 
\\ & \hspace{100pt}- \left( K_{X^{(n)}/Y} +  \frac{n+1}{n}\Delta^{(n)} + \frac{1}{n}( dr(K_{X/Y} + \Delta) + dA + K_{X/Y})^{(n)}  \right)
\\ & =
mr \left(K_{X^{(n)}/Y} + \Delta^{(n)} \right)  - \frac{n+1}{n} \left( K_{X^{(n)}/Y} +  \Delta^{(n)} \right)  - \frac{dr}{n} \left(K_{X^{(n)}/Y} + \Delta^{(n)} \right),
\\ & =
\left( mr - \frac{n+1 + dr}{n}   \right) \left(K_{X^{(n)}/Y} + \Delta^{(n)} \right),
\end{align*}
\item $H^0(X_0^{(n)},\sN)= S^0 \left( X_0^{(n)}, \sigma \left(X_0^{(n)}, (\Delta_n^{(n)} )_0 \right) \otimes \sN|_{X_0^{(n)}} \right)$ by \eqref{eq:index_p_divisible_pushforward:assumption_central_fiber}, Lemma \ref{lem:S_0_monotone} and Lemma \ref{lem:product}.
\end{itemize}
Therefore, applying Proposition \ref{prop:semi_positive} yields that the following vector bundle is nef for every $v \gg 0$.
\begin{multline*}
f^{(n)}_* \sN
\cong   
f^{(n)}_* \sO_{X^{(n)}} \left( mr(K_{X^{(n)}/Y} + \Delta^{(n)}) + \frac{d}{n}A^{(n)} \right)
\\ \cong   
f^{(n)}_* \sO_{X^{(n)}} ( mr(K_{X^{(n)}/Y} + \Delta^{(n)})) \otimes \sO_Y(dB)
\cong   
\underbrace{\left( \bigotimes_{i=1}^n f_* \sO_{X} ( mr(K_{X/Y} + \Delta)) \right) \otimes \sO_Y(dB)}_{\textrm{by Lemma \ref{lem:cohomology_and_base_change_product}}}
\end{multline*}
Since this holds for every $n = p^v -1 \gg 0$, $f_* \sO_{X} ( mr(K_{X/Y} + \Delta)) $ is nef.
\end{proof}

\section{Applications}
\label{sec:applications}

\subsection{Projectivity of proper moduli spaces}
\label{sec:projectivity}

Recently there have been great advances in constructing  moduli spaces of varieties (or pairs) of (log-)general type in characteristic zero, c.f., \cite{Kollar_Moduli_of_varieties_of_general_type}. However, the method is not new. It has been worked out in \cite{Kollar_Projectivity_of_complete_moduli} and is  as follows. First, one defines a subfunctor of the functor of all families of (log-)canonically polarized varieties. Second, one proves nice properties of this functor: openness, separatedness, properness, boundedness and tame automorphisms. Then it follows that the chosen functor admits a coarse moduli space, which is a proper algebraic space. Third, by exhibiting a semi-positive line bundle on a finite cover of the functor that descends to the coarse moduli space, one proves that that the coarse moduli space is a projective scheme. Hence, Theorem \ref{thm:pushforward_nef_intro}  implies that in positive characteristics if a subfunctor as above of families of sharply $F$-pure varieties satisfies the 
first two steps, then the third step is satisfied as well. That is, it admits a coarse moduli space.

\begin{cor}
\label{cor:projectivity_of_moduli}
Let $\sF$ be a subfunctor of 
\begin{equation*}
Y \mapsto  \left. \left\{
\left.\raisebox{25pt}{   \xymatrix{ X \ar[d]_f \\ Y } } \right|
\parbox{280pt}{ \small
 $f : X \to Y$ is a flat, relatively $S_2$ and $G_1$, equidimensional, projective morphism with sharply $F$-pure fibers, such that
 there is a $p \nmid r>0$, for which   $\omega_{X/Y}^{[r]}$ is an $f$-ample line bundle, and $\Aut(X_y)$ is finite for all $y \in Y$  
}
\right\} \right/ \textrm{\small $\cong$ over $Y$}.
\end{equation*}
If $\sF$ admits
\begin{enumerate}
\item \label{assumption:projectivity_of_moduli:coarse_moduli} a coarse moduli space $\pi: \sF \to V$, which is a proper algebraic space and 
\item \label{assumption:projectivity_of_moduli:finite_cover} a morphism $\rho : Z \to \sF$ from a scheme, such that $\pi \circ \rho$ is finite,
\end{enumerate}
then $V$ is a projective scheme. 
\end{cor}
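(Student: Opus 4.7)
The plan is to apply Corollary \ref{cor:ample} to the family $g : W \to Z$ associated to $\rho$. Of its hypotheses, sharp $F$-purity of the fibers, relative ampleness of $\omega_{W/Z}^{[r]}$, and finiteness of $\Aut(W_z)$ are all built into the defining properties of $\sF$. The only nontrivial hypothesis to verify is that for each $z \in Z$ only finitely many $z' \in Z$ satisfy $W_z \cong W_{z'}$; this holds because the coarse moduli property of $\pi$ forces any such $z'$ to lie in the fiber of $\pi \circ \rho$ through $z$, and that fiber is finite by assumption. Corollary \ref{cor:ample} therefore yields that $\sL := \det\bigl(g_* \omega_{W/Z}^{[mr]}\bigr)$ is an ample line bundle on $Z$ for every sufficiently large and divisible $m$.

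The next task is to descend a suitable power of $\sL$ along $\pi \circ \rho$ to a line bundle $\sM$ on $V$. The construction $T \mapsto \det(f_* \omega_{X/T}^{[mr]})$ is functorial in the family $f : X \to T$ belonging to $\sF(T)$, so for any $v \in V$ corresponding to an isomorphism class $[X]$, the fiber of $\sL$ at any preimage is canonically $\det H^0(X, \omega_X^{[mr]})$ up to the natural action of the finite group $\Aut(X)$ on this one-dimensional space. Boundedness of $\sF$ (which follows from the existence of the finite cover $\pi \circ \rho$ by a scheme together with properness of $V$) supplies a uniform integer $N > 0$ killing all these automorphism actions simultaneously; standard descent for line bundles on proper algebraic spaces then produces a line bundle $\sM$ on $V$ with $(\pi \circ \rho)^* \sM \cong \sL^{N}$. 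Since $\pi \circ \rho$ is finite (and, if necessary, we may replace $V$ by the image of $\pi \circ \rho$, which is a closed proper algebraic subspace also covered by $\sF$) and $\sL^N$ is ample on $Z$, the bundle $\sM$ is ample on $V$.

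A proper algebraic space carrying an ample line bundle is a projective scheme, so $V$ is projective as claimed. The main obstacle in the argument is the descent step: producing a uniform exponent $N$ that trivializes every automorphism action at once and then invoking descent in the algebraic-space setting. This is the classical bottleneck in converting a semipositivity or ampleness statement into projectivity of a coarse moduli space, and it is handled here by the general scheme initiated in \cite{Kollar_Projectivity_of_complete_moduli}; the novelty of our setting is simply that the ampleness input on $Z$ is now supplied in positive characteristic by Corollary \ref{cor:ample}, rather than by characteristic-zero Hodge-theoretic semipositivity.
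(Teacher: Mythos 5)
Your proposal is correct and follows essentially the same route as the paper: apply Corollary \ref{cor:ample} to the family $g : W \to Z$ (using that the isomorphism classes of fibers of $g$ coincide with the finite fibers of $\pi \circ \rho$), descend (a power of) $\det\bigl(g_* \omega_{W/Z}^{[mr]}\bigr)$ to $V$ via the finiteness of the automorphism groups as in \cite[2.5]{Kollar_Projectivity_of_complete_moduli}, and conclude ampleness on $V$ because its pullback along the finite map $\pi \circ \rho$ is ample. The only difference is cosmetic: the paper achieves the descent by taking $m$ sufficiently divisible rather than by raising the determinant to a uniform power $N$, which is the same mechanism.
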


\begin{rem}
As mentioned in the introduction of this section, by \cite[Theorem 2.2]{Kollar_Projectivity_of_complete_moduli}, the assumptions of Corollary \ref{cor:projectivity_of_moduli} is satisfied if $\sF$  belongs to an open class with tame automorphisms and  is separated, bounded and complete. 
\end{rem}

\begin{proof}[Proof of Corollary \ref{cor:projectivity_of_moduli}]
To prove that $V$ is a projective scheme, one has to exhibit an ample line bundle on it. Let this be in our situation the descent of $\det \left( g_* \omega_{W/Z}^{[mr]} \right)$ to $V$ for some high and divisible enough $m$. Note that for divisible enough $m$, $\det \left( g_* \omega_{W/Z}^{[mr]} \right)$ descends indeed by the finiteness of the automorphism groups of the fibers, c.f., \cite[2.5]{Kollar_Projectivity_of_complete_moduli}. To prove that it is ample, it is enough to show that its pullback via $\pi \circ \rho$ is ample.  Therefore we are supposed to prove that  $\det \left( g_* \omega_{W/Z}^{[mr]} \right)$ is ample for some $m$ big and divisible enough.   However, since the isomorphism equivalence classes of the fibers of $g$ are exactly the fibers $\pi \circ \rho$, this ampleness is shown in Corollary \ref{cor:ample}.
\end{proof}

\subsection{Characteristic zero implications}

Fix an algebraically closed field $k'$ of characteristic zero throughout this section.
The following is a major conjecture in the theory of $F$-singularities (c.f., \cite{Mustata_Ordinary_varieties_and} \cite{Mustata_Srinivas_Ordinary_varieties_and}, \cite[Conjecture 1 and Corollary 4.5]{Miller_Schwede_Semi_log_canonical}). 

\begin{conj}
\label{conj:semi_log_canonical_reduction}
Given a pair $(X,\Delta)$ with semi-log canonical singularities over $k'$, consider a model $(X', \Delta')$ of it over a $\bZ$-algebra $A \subseteq k'$ of finite type.  Then there is a dense set of closed points $S \subseteq \Spec A$, such that $(X_s',\Delta_s')$ is sharply $F$-pure for all $s \in S$. 
\end{conj}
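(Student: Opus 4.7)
The strategy splits into two stages. First, reduce from the semi-log canonical case to the normal log canonical one. Given a semi-log canonical pair $(X, \Delta)$ over $k'$, consider its normalization $\nu : \bar{X} \to X$ with conductor divisor $C$, so that $(\bar{X}, \bar{\Delta} + C)$ is log canonical, where $\bar{\Delta}$ denotes the strict transform of $\Delta$. Spreading out, one obtains a compatible normalization $\bar{X}' \to X'$ over an open subset of $\Spec A$, and both $\bar{\Delta}'$ and $C'$ specialize sensibly. The plan here is to combine the $F$-inversion of adjunction results used throughout this paper with the descent criterion of Miller--Schwede \cite{Miller_Schwede_Semi_log_canonical}, so that sharp $F$-purity of $(\bar{X}'_s, \bar{\Delta}'_s + C'_s)$ at a closed point $s$ implies sharp $F$-purity of the original $(X'_s, \Delta'_s)$ at the same $s$, reducing the problem to producing one dense set of closed points of the required type for the normal log canonical pair.

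Second, in the normal log canonical case, the statement is essentially the Mustata--Srinivas conjecture on dense $F$-pure (``ordinary'') reduction. Concretely, one needs to exhibit a dense set of closed points $s$ at which the trace map $F^e_* \mathcal{O}_{\bar{X}'_s}((1 - p^e)(K_{\bar{X}'_s} + \bar{\Delta}'_s + C'_s)) \to \mathcal{O}_{\bar{X}'_s}$ is surjective for some (hence all sufficiently divisible) $e$. The natural path is through $p$-adic Hodge theory: for each prime $p$ of good reduction one studies the action of Frobenius on the de Rham--crystalline cohomology of $\bar{X}'_s$ (possibly twisted by the boundary), and sharp $F$-purity is expected to follow from a suitable ordinarity of this action, which conjecturally holds on a dense set of primes. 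Known inputs, such as the cases of curves, abelian varieties, K3 surfaces, and certain complete intersections, would need to be combined with the log-canonical boundary-twisted Koszul-type formalism of \cite{Mustata_Srinivas_Ordinary_varieties_and}.

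The main obstacle is the second stage. The Mustata--Srinivas conjecture is wide open for a general smooth projective variety of dimension at least three, let alone for log canonical pairs, and there is currently no technique that can bypass a Hodge-theoretic input controlling the Frobenius action on the cohomology of the reductions. Any unconditional proof would require a genuinely new idea. A more tractable short-term target is to establish the conjecture in specific classes where ordinarity is available, such as low dimensions, abelian fibrations, or pairs of Fano type; here the first reduction stage still provides new content by extending those cases from the log canonical to the semi-log canonical setting, which is what is needed to apply Corollary \ref{cor:characteristic_zero} unconditionally in those ranges.
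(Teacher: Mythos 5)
There is nothing to compare here on the paper's side: the statement you were asked to prove is stated in the paper as Conjecture \ref{conj:semi_log_canonical_reduction} and is nowhere proved there. The paper explicitly presents it as a major open conjecture in the theory of $F$-singularities (citing \cite{Mustata_Srinivas_Ordinary_varieties_and} and \cite[Conjecture 1 and Corollary 4.5]{Miller_Schwede_Semi_log_canonical}), records in the remark following it that only the Kawamata log terminal / strongly $F$-regular analogue is known \cite{Takagi_A_characteristic_p_analogue_of_plt_sintularities}, and then uses the conjecture only as a hypothesis in Corollary \ref{cor:characteristic_zero} (which is why that corollary has the two-case formulation: the klt case is unconditional, the slc case is conditional on the conjecture holding for the chosen models).

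Your proposal is consequently not a proof, and you say so yourself; judged as an assessment of the problem it is accurate. The first stage (normalize, pass to $(\bar X,\bar\Delta+C)$ with the conductor, spread out, and descend sharp $F$-purity from the normalization with conductor back to the slc reduction) is exactly the reduction that \cite{Miller_Schwede_Semi_log_canonical} is designed for, and it is the reason that reference appears next to the conjecture in the paper. The second stage is where the genuine obstruction lies: dense sharply $F$-pure reduction for a normal log canonical pair contains (already for $\Delta=0$ and mild singularities) the weak ordinarity conjecture of Musta\c{t}\u{a}--Srinivas, which is open in dimension $\geq 2$ beyond special classes, and no argument in this paper or in the cited literature circumvents the need for control of the Frobenius action on cohomology of the reductions. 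So the only point to flag is one of expectation management: you should not present the two-stage outline as a path that could be completed with current technology, and for the purposes of this paper the correct reading is that the statement is an assumption, with the unconditional content of Corollary \ref{cor:characteristic_zero} confined to the klt case via \cite{Takagi_A_characteristic_p_analogue_of_plt_sintularities}.
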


\begin{rem}
If in the above conjecture semi-log canonical is replaced by Kawmata log terminal and sharply $F$-pure by strongly $F$-regular, then then the statement is known \cite{Takagi_A_characteristic_p_analogue_of_plt_sintularities}.
\end{rem}

Hence, the results of the paper has the following consequences in characteristic zero. We emphasize this is a completely new algebraic method of obtaining such positivity results in characteristic zero.

\begin{cor}
\label{cor:characteristic_zero}
Let  $(X, \Delta)$ be a  pair over $k'$ with  $\bQ$-Cartier $K_X + \Delta$ and  $f : X \to Y$  a flat, projective morphism  to a smooth projective curve. Further suppose that there is a $y_0 \in Y$, such that $\Delta$ avoids all codimension 0 and the singular codimension 1 points of $X_{y_0}$, and either 
\begin{enumerate}
 \item $(X_{y_0} , \Delta_{y_0})$ is   Kawamata log terminal, or
 \item $(X_{y_0} , \Delta_{y_0})$ is semi-log-canonical and for every model over a $\bZ$-algebra $A$ of finite type, it satisfies the statement of Conjecture \ref{conj:semi_log_canonical_reduction}. 
\end{enumerate}
Assume also that $K_{X/Y} + \Delta$ is $f$-ample (resp. $f$-semi-ample). 
Then   for $m \gg 0$ and divisible enough, $f_* \sO_X(m(K_{X/Y} + \Delta))$ is a nef vector bundle (resp. $K_{X/Y} + \Delta$ is nef).
\end{cor}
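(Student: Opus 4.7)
The plan is to reduce the statement to the positive characteristic theorems of the paper via standard spreading out and reduction modulo $p$. I would begin by choosing a finitely generated $\bZ$-subalgebra $A \subseteq k'$ and a model $(\mathcal{X}, \mathcal{D}) \to \mathcal{Y} \to \Spec A$ of the data $(X, \Delta) \to Y$ over $\Spec A$. After possibly shrinking $\Spec A$, one can arrange that all the geometric hypotheses of Notation \ref{notation:semi_positivity} hold on every closed fiber of $\Spec A$: flatness and projectivity of the relative morphism, smoothness of $\mathcal{Y} \to \Spec A$ with one-dimensional fibers, the index $r$ of $K_{\mathcal{X}/\mathcal{Y}} + \mathcal{D}$, the relative (semi-)ampleness of this divisor, and the precise location of $\mathcal{D}$ relative to the spread-out special fiber $\mathcal{X}_{y_0}$. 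One further arranges that every residue characteristic in a dense set of $\Spec A$ is coprime to $r$.

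Next, in the KLT case I would invoke Takagi's theorem (the remark following Conjecture \ref{conj:semi_log_canonical_reduction}) to produce a dense set $S \subseteq \Spec A$ of closed points $s$ with $(\mathcal{X}_{s, y_0}, \mathcal{D}_{s, y_0})$ strongly $F$-regular; in the SLC case, the assumed Conjecture \ref{conj:semi_log_canonical_reduction} furnishes a dense set $S$ with $(\mathcal{X}_{s, y_0}, \mathcal{D}_{s, y_0})$ sharply $F$-pure. For each such $s$, applying the appropriate positive characteristic theorem --- Theorem \ref{thm:index_p_divisible_nef} or \ref{thm:relative_canonical_nef_intro} in the semi-ample case, Theorem \ref{thm:index_p_divisible_pushforward} or \ref{thm:pushforward_nef} in the ample case --- yields nefness of $K_{\mathcal{X}_s/\mathcal{Y}_s} + \mathcal{D}_s$ (semi-ample case), respectively nefness of $(\mathfrak{f}_s)_* \sO_{\mathcal{X}_s}(m(K_{\mathcal{X}_s/\mathcal{Y}_s} + \mathcal{D}_s))$ for $m$ sufficiently large and divisible (ample case).

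Finally, I would lift nefness from positive to zero characteristic by a standard specialization argument. For the divisor statement, nefness is equivalent to nonnegativity of intersection numbers against integral curves, and these intersection numbers are locally constant in the flat family, so nefness at a single $s \in S$ already forces nefness of $K_{X/Y} + \Delta$ on the generic fiber. For the pushforward statement, after choosing $m$ large and divisible and shrinking $\Spec A$ so that relative Fujita vanishing guarantees commutation with base change, the coherent sheaf $\mathfrak{f}_* \sO_{\mathcal{X}}(m(K_{\mathcal{X}/\mathcal{Y}} + \mathcal{D}))$ is a vector bundle on $\mathcal{Y}$ whose restriction to each closed fiber $\mathcal{Y}_s$ is the corresponding pushforward; nefness is then tested by the degrees of quotient line bundles after pullback to finite covers of $\mathcal{Y}$, and these degrees again specialize.

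The main technical obstacle is uniformity of the threshold $m \gg 0$ in the pushforward case: a priori the bound from Theorem \ref{thm:pushforward_nef} depends on the individual fiber, so one has to know that a single choice of $m$ (independent of $s$) works for some $s \in S$. I would address this by applying the Fujita type bounds of Theorem \ref{thm:relative_Fujita}, Corollary \ref{cor:Fujita_type_S_0_equals_H_0}, and Proposition \ref{prop:H_0_equals_S_0_relative} directly to the spread-out family $\mathcal{X} \to \mathcal{Y}$ over $\Spec A$, which produces a single threshold $M$ valid simultaneously at every closed fiber with sharply $F$-pure (resp.\ strongly $F$-regular) reduction at $y_0$. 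With such a uniform $M$ in hand, nefness at any single $s \in S$ with residue characteristic coprime to $r$ suffices to conclude in characteristic zero.
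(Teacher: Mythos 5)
Your overall route is the same as the paper's: spread out over a finite type $\bZ$-algebra $A\subseteq k'$, shrink $\Spec A$ to preserve the hypotheses, use Takagi's theorem (KLT case) or Conjecture \ref{conj:semi_log_canonical_reduction} (SLC case) to produce a dense set $S$ of closed points whose distinguished fiber has sharply $F$-pure (resp.\ strongly $F$-regular) reduction, apply the positive characteristic theorems there, and transfer nefness back to characteristic zero via the fact that nefness at a point propagates to its generalizations (the paper quotes \cite[Proposition 1.4.13]{Lazarsfeld_Positivity_in_algebraic_geometry_I}; your direct argument with intersection numbers, and with quotient line bundles on finite covers in the bundle case, is an acceptable substitute). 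Since one inverts the index $r$ on $\Spec A$, strongly $F$-regular reductions are in particular sharply $F$-pure, so in the KLT case the paper simply uses Theorems \ref{thm:relative_canonical_nef} and \ref{thm:pushforward_nef} rather than the $p\mid\ind$ variants; either choice works. You should also say explicitly, as the paper does, that one passes from $s$ to $\bar s$ (algebraic closure of the finite residue field), since the characteristic $p$ theorems are stated over an algebraically closed field and both the hypotheses and the nefness conclusions are insensitive to this base change.

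The one step that fails as written is your fix for the uniformity of $m$: you cannot apply Corollary \ref{cor:Fujita_type_S_0_equals_H_0} or Proposition \ref{prop:H_0_equals_S_0_relative} ``directly to the spread-out family $\mathcal{X}\to\mathcal{Y}$ over $\Spec A$''. These are Frobenius-theoretic statements, formulated over the fixed algebraically closed field of characteristic $p$; the objects $S^0$, $\sigma(X,\Delta)$ and $\sL_{e,\Delta}$ have no meaning over the mixed-characteristic base $\Spec A$, so they cannot produce a threshold valid simultaneously at all closed fibers. Only the relative Fujita vanishing, Theorem \ref{thm:relative_Fujita}, is available over an arbitrary noetherian base. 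Fortunately no uniformity in $s$ is needed, and this is how the paper's proof is organized: fix a single $s_0\in S$; Theorem \ref{thm:pushforward_nef} at $\bar s_0$ gives nefness of the pushforward for \emph{every} $m$ above the threshold of that one fiber, while relative Fujita vanishing applied over $\Spec A$ (legitimate there) gives a bound above which $f'_*\sO_{X'}(mr(K_{X'/Y'}+\Delta'))$ is a vector bundle commuting with restriction to $Y'_{\bar s_0}$ (this is condition (10) in the paper's proof). For every $m$ above both bounds the specialization argument then yields nefness of $f_*\sO_X(mr(K_{X/Y}+\Delta))$ in characteristic zero, so a single well-chosen closed point handles all sufficiently large and divisible $m$ at once. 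Rephrase your last paragraph along these lines and the argument matches the paper's.
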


\begin{proof}
Consider a model $f' :(X', \Delta') \to Y'$ of $f :(X, \Delta) \to Y$ over a $\bZ$-algebra $A \subseteq k'$ of finite type. By normalizing and then further localizing $A$, we may assume that
\begin{enumerate}
\item $\Spec A$ is Gorenstein,
\item $f'$ is flat,
\item $\Delta'$ avoids the codimension 0 and the singular codimension 1 points of $X_s'$,
\item $(X', \Delta')_s$ is a pair, i.e., $X$ is $S_2$ and $G_1$, for all $s \in \Spec A$,
\item $K_{X'} + \Delta'$ is $\bQ$-Cartier (note that if $r=\ind (K_{X'} + \Delta')$, then $r(K_{X'} + \Delta')|_{X_s} = r(K_{X_s'} + \Delta_s')$ in codimension one and then everywhere, therefore $\ind(K_{X_s'} + \Delta_s') | \ind (K_{X'} + \Delta')$),
\item $\mathrm{char}(s) \nmid \ind(K_{X'}+ \Delta')$ for every $s \in S$ (and then by the above considerations, $\mathrm{char}(s) \nmid \ind(K_{X'_s}+ \Delta'_s)$),
\item $Y'_s$ is smooth for every $s \in S$,
\item $\Delta$ avoids all codimension 0 and the singular codimension 1 points of $X_{(y_0,s)}$ for all $s \in \Spec A$,
\item $K_{X'/Y'} + \Delta'$ is $f'$-ample (resp. $f'$-semi-ample) and 
\item in the $f'$-ample case, we may also assume that $m$ is chosen big and divisible enough such that $f_*' \sO_X(m (K_{X'/Y'} + \Delta'))|_{Y_s} \cong (f_s')_* \sO_X(m (K_{X'_s/Y'_s} + \Delta'_s))$ and the same for $s$ replaced by $\bar{s}$ (if $s$ was given by a morphism $A \to k''$, then $\bar{s}$ denotes a morphism given by $A \to \bar{k''}$, where $\bar{k''}$ is any algebraic closure of $k''$).
\end{enumerate}
Note also that by the assumptions there is a dense set $S \subseteq \Spec A$ of closed points for which $\left( X_{(y_0,s)}', \Delta_{(y_0,s)}' \right)$ is $F$-pure. In particular then for every $s \in S$, $(X_s', \Delta_s')$ satisfy the assumptions of Notation \ref{notation:semi_positivity} and Theorem \ref{thm:pushforward_nef} (resp. Theorem \ref{thm:relative_canonical_nef}) except that the base field is not algebraically closed. However, the above assumptions   are stable under passing to the algebraic completion of the base-field. Therefore, $(X_{\bar{s}}', \Delta_{\bar{s}}')$ satisfies the assumptions of Notation \ref{notation:semi_positivity} and Theorem \ref{thm:pushforward_nef} (resp. Theorem \ref{thm:relative_canonical_nef}) for all $s \in S$ including the algebraic closedness of the base field.  In particular, by Theorem \ref{thm:pushforward_nef} (resp. Theorem \ref{thm:relative_canonical_nef}), $(f_*' \sO_X(m(K_{X'/Y'} + \Delta')))_{\bar{s}}$ is a nef vector bundle (resp. $(K_{X/Y} + \Delta)_{\bar{
s}}$ is nef) for every $s \in S$. However, then so is $(f_*' \sO_X(m(K_{X'/Y'} + \Delta')))_{s}$ (resp. $(K_{X/Y} + \Delta)_{s}$). By \cite[Proposition 1.4.13]{Lazarsfeld_Positivity_in_algebraic_geometry_I}, nefness at a point implies nefness at all its generalizations. Hence   $f_* \sO_X(m (K_{X/Y} + \Delta))$ (resp. $ K_{X/Y} + \Delta$) is nef.
\end{proof}

\subsection{Subadditivity of Kodaira-dimension}

Subadditivity of Kodaira dimension was one of the major applications in characteristic zero of the semipositivity of $f_* \omega_{X/Y}^{m}$ (e.g., \cite{Viehweg_Weak_positivity}, \cite{Kollar_Subadditivity_of_the_Kodaira_dimension}). We present here a similar result in positive characteristic. However, we would to draw the reader's attention that in positive characteristic,  there is already some ambiguity to the notion of Kodaira dimension. See Question \ref{qtn:Kodaira_dimension} for explanation. Hence we have to phrase the statement slightly differently, involving the bigness of (log-)canonical divisors instead of the (log-)Kodaira dimension.

\begin{cor}
\label{cor:subadditivity_general_type_base}
In the situation of Notation \ref{notation:results}, if furthermore $Y$ is an $S_2, G_1$, equidimensional projective variety with $K_Y$ $\bQ$-Cartier and big,  $K_{X/Y} + \Delta$ is $f$-semi-ample and  $K_F + \Delta|_F$ is big for the generic fiber $F$, then $K_X + \Delta$ is  big.
\end{cor}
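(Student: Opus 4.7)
The plan is to combine the nefness theorem~\ref{thm:relative_canonical_nef_intro} with a Kleiman-style top-self-intersection computation. First, since $K_{X/Y}+\Delta$ is $f$-semi-ample, it is $f$-nef and its restriction to every fiber (in particular to all but finitely many $X_y$) is semi-ample. All hypotheses of Theorem~\ref{thm:relative_canonical_nef_intro} are therefore satisfied, so $\omega_{X/Y}^{[r]}(r\Delta)$, equivalently $K_{X/Y}+\Delta$, is nef on $X$.

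Next I would establish the relation $K_X+\Delta\sim_{\bQ}(K_{X/Y}+\Delta)+f^*K_Y$ as $\bQ$-Cartier divisor classes on $X$. Over the Gorenstein locus $Y^{\circ}\subseteq Y$, whose complement has codimension $\geq 2$ since $Y$ is $S_2,G_1$, the identity $\omega_X\cong \omega_{X/Y}\otimes f^*\omega_Y$ holds by \cite[Lemma~4.10]{Patakfalvi_Components_of_the_moduli_space_of_stable_schemes}. By flatness of $f$ the preimage $f^{-1}(Y\setminus Y^{\circ})$ also has codimension $\geq 2$ in $X$, so choosing $N$ divisible by both $\ind(K_Y)$ and $\ind(K_{X/Y}+\Delta)$, the two sides of the desired $N$-th power isomorphism are reflexive rank-one sheaves on $X$ agreeing in codimension $1$, hence they agree globally.

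Write $L:=K_{X/Y}+\Delta$ and $n:=\dim X$. Using bigness of $K_Y$ and $\bQ$-Cartierness, pick $s>0$ with $sK_Y\sim_{\bQ}A+E$ for $A$ an ample $\bQ$-Cartier divisor and $E$ an effective $\bQ$-Cartier divisor. Then $sL+f^*A$ is a sum of two nef $\bQ$-Cartier divisors, hence nef. The key computation is the expansion
\begin{equation*}
(sL+f^*A)^{n}=\sum_{k=0}^{n}\binom{n}{k}s^{n-k}\,L^{n-k}\cdot (f^*A)^{k}.
\end{equation*}
All summands are $\geq 0$ by nefness, and $(f^*A)^{k}=0$ for $k>\dim Y$ for dimensional reasons. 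The $k=\dim Y$ term equals $\binom{n}{\dim Y}s^{\dim F}(L|_F)^{\dim F}\cdot A^{\dim Y}$ by the projection formula applied to the flat morphism $f$, where $F$ is a general fiber. This is strictly positive: $L|_F=K_F+\Delta|_F$ is nef as the restriction of a nef divisor and big by hypothesis, so $(L|_F)^{\dim F}>0$, while $A^{\dim Y}>0$ by ampleness. Hence $(sL+f^*A)^{n}>0$, and Kleiman's bigness criterion for nef $\bQ$-Cartier divisors on a projective pure-dimensional scheme yields that $sL+f^*A$ is big. Finally $s(K_X+\Delta)\sim_{\bQ}(sL+f^*A)+f^*E$ is the sum of a big and an effective divisor, so $K_X+\Delta$ is big.

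The main obstacle is foundational rather than conceptual: one must make sense of the adjunction formula for $\omega_X$ over a non-Gorenstein $S_2,G_1$ base $Y$ and justify the Kleiman bigness criterion and projection formula on the non-normal scheme $X$. Both are handled by restricting to the codimension-$\geq 2$ complements where the formula $\omega_X\cong\omega_{X/Y}\otimes f^*\omega_Y$ literally holds (and then extending reflexively), and by invoking Snapper/asymptotic Riemann-Roch together with the Fujita-type bounds $h^{i}(X,\sO_X(mD))=O(m^{n-1})$ for nef $\bQ$-Cartier $D$, which remain valid in positive characteristic.
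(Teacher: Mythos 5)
Your proof is correct and follows essentially the same route as the paper: deduce nefness of $K_{X/Y}+\Delta$ from the relative nefness theorem, write a multiple of the big divisor $K_Y$ as ample plus effective, and conclude bigness of $K_X+\Delta$ from positivity of the top self-intersection of the nef divisor $s(K_{X/Y}+\Delta)+f^*A$, using nefness to discard cross terms and bigness of $K_F+\Delta|_F$ on the fiber. The only differences are cosmetic: you spell out the adjunction $K_X+\Delta\sim_{\bQ}(K_{X/Y}+\Delta)+f^*K_Y$ over the non-Gorenstein locus and the Kleiman-type criterion on a possibly non-normal $X$, points the paper leaves implicit, and you invoke Theorem \ref{thm:relative_canonical_nef_intro} (the appropriate citation here) where the paper cites Theorem \ref{thm:relative_canonical_nef_2_intro}.
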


\begin{proof}
Since, $K_Y$ is big, there is a $m>0$, such that $m K_Y = A + E$ for integer very ample and effective divisors $A$ and $E$. It is enough to prove that $f^*A + m (K_{X/Y} + \Delta)$ is big. By Theorem \ref{thm:relative_canonical_nef_2_intro}, $K_{X/Y} + \Delta$ is nef. So, since $f^*A + m (K_{X/Y} + \Delta)$ is nef, it is enough to show that $(f^* A + m (K_{X/Y}+ \Delta))^{\dim X} > 0$. However then the following computation concludes our proof.
\begin{multline*}
 (f^* A + m(K_{X/Y}+\Delta))^{\dim X} 
\geq 
\underbrace{f^* A^{\dim Y} \cdot m(K_{X/Y} + \Delta)^{\dim F}}_{\textrm{both $f^*A$ and $K_{X/Y} + \Delta$ are nef}}
\\ =
A^{\dim Y} \cdot (m(K_F + \Delta|_F))^{\dim F} 
>
\underbrace{0}_{K_F + \Delta|_F\textrm{ is big}},
\end{multline*}
\end{proof}

\section{Questions}
\label{sec:questions}

Here we list questions that are left open by the article and we feel are important. We feel that the next one is the most important.

\begin{question}
Given a projective, sharply $F$-pure pair $(X, \Delta)$ with $K_X + \Delta$ $\bQ$-Cartier and ample, is $\Aut(X,\Delta)$ finite? More generally one might ask the same question but for semi-log canonical pairs in positive characteristics instead of sharply $F$-pure pairs.
\end{question}

The next question is motivated by the absence of resolution of singularities in positive characteristics. Recall, that the Kodaira dimension of a variety $X$ in characteristic zero is defined as the Kodaira dimension of $K_{X'}$ for a projective smooth birational model $X'$ of $X$.

\begin{question}
\label{qtn:Kodaira_dimension}
Is there a birational invariant in positive characteristics, which specializes to Kodaira dimension in the particular case when there is a smooth birational model? In particular this question would be solved if we had resolution of singularities in positive characteristics.
\end{question}

The following few questions concern sharpness of theorem \ref{thm:pushforward_nef_intro}.

\begin{question}
Can one drop the  index not divisible by $p$ assumption in Theorem \ref{thm:pushforward_nef_intro}?
\end{question}

\begin{question}
Is there a family $f : X \to Y$ of semi-log canonical but not sharply $F$-pure schemes over a projective smooth curve with $K_X$ is $\bQ$-Cartier $f$-ample, such that $f_* \omega_{X/Y}^{[m]}$ is not nef for every $m \gg 0$?
\end{question}

\begin{question}
Can one give an effective bound on $m$ for which Theorem \ref{thm:pushforward_nef_intro} holds? Is it possibly true for $m \geq 2$?
\end{question}

\bibliographystyle{skalpha}
\bibliography{includeNice}

\def\lfhook#1{\setbox0=\hbox{#1}{\ooalign{\hidewidth
  \lower1.5ex\hbox{'}\hidewidth\crcr\unhbox0}}}
  \def\lfhook#1{\setbox0=\hbox{#1}{\ooalign{\hidewidth
  \lower1.5ex\hbox{'}\hidewidth\crcr\unhbox0}}}
\providecommand{\bysame}{\leavevmode\hbox to3em{\hrulefill}\thinspace}
\providecommand{\MR}{\relax\ifhmode\unskip\space\fi MR}
\providecommand{\MRhref}[2]{%
  \href{http://www.ams.org/mathscinet-getitem?mr=#1}{#2}
}
\providecommand{\href}[2]{#2}
\begin{thebibliography}{BHPS12}

\bibitem[Ara71]{Arakelov_Families_of_algebraic_curves}
{\sc S.~J. Arakelov}: \emph{Families of algebraic curves with fixed
  degeneracies}, Izv. Akad. Nauk SSSR Ser. Mat. \textbf{35} (1971), 1269--1293.
  {\sf\scriptsize MR0321933 (48 \#298)}

\bibitem[BHPS12]{Bhatt_Ho_Patakfalvi_Schnell_Moduli_of_products_of_stable_vari%
eties}
{\sc B.~Bhatt, W.~Ho, {\relax Zs}.~Patakfalvi, and C.~Schnell}: \emph{Moduli of
  products of stable varieties}, arXiv:math/1206.0438 (2012).

\bibitem[Car58]{Cartan_Plongements_projectifs_Seminaire_Henri_Cartan_Fonctions%
_automorphes}
{\sc H.~Cartan}: \emph{Plongements projectifs}, S\'eminaire {H}enri {C}artan;
  10e ann\'ee: 1957/1958. {F}onctions {A}utomorphes, 2 vols, Secr\'etariat
  math\'ematique, 11 rue Pierre Curie, Paris, 1958. {\sf\scriptsize 0103988 (21
  \#2750)}

\bibitem[EH87]{Eisenbud_Harris_The_Kodaira_dimension_of_the_moduli_space_of_cu%
rves}
{\sc D.~Eisenbud and J.~Harris}: \emph{The {K}odaira dimension of the moduli
  space of curves of genus {$\geq 23$}}, Invent. Math. \textbf{90} (1987),
  no.~2, 359--387. {\sf\scriptsize MR910206 (88g:14027)}

\bibitem[Fuj12]{Fujino_Semi_positivity_theorems_for_moduli_problems}
{\sc O.~Fujino}: \emph{Semi-positivity theorems for moduli problems}, preprint
  (2012).

\bibitem[Fuj78]{Fujita_On_Kahler_fiber_spaces}
{\sc T.~Fujita}: \emph{On {K}\"ahler fiber spaces over curves}, J. Math. Soc.
  Japan \textbf{30} (1978), no.~4, 779--794. {\sf\scriptsize 513085
  (82h:32024)}

\bibitem[Gri70]{Griffiths_Periods_of_integrals}
{\sc P.~A. Griffiths}: \emph{Periods of integrals on algebraic manifolds.
  {III}. {S}ome global differential-geometric properties of the period
  mapping}, Inst. Hautes \'Etudes Sci. Publ. Math. (1970), no.~38, 125--180.
  {\sf\scriptsize MR0282990 (44 \#224)}

\bibitem[Gro65]{Grothendieck_Elements_de_geometrie_algebrique_IV_II}
{\sc A.~Grothendieck}: \emph{\'{E}l\'ements de g\'eom\'etrie alg\'ebrique.
  {IV}. \'{E}tude locale des sch\'emas et des morphismes de sch\'emas. {II}},
  Inst. Hautes \'Etudes Sci. Publ. Math. (1965), no.~24, 231. {\sf\scriptsize
  0199181 (33 \#7330)}

\bibitem[Har77]{Hartshorne_Algebraic_geometry}
{\sc R.~Hartshorne}: \emph{Algebraic geometry}, Springer-Verlag, New York,
  1977, Graduate Texts in Mathematics, No. 52. {\sf\scriptsize MR0463157 (57
  \#3116)}

\bibitem[Har94]{Hartshorne_Generalized_divisors_on_Gorenstein_schemes}
{\sc R.~Hartshorne}: \emph{Generalized divisors on {G}orenstein schemes},
  Proceedings of {C}onference on {A}lgebraic {G}eometry and {R}ing {T}heory in
  honor of {M}ichael {A}rtin, {P}art {III} ({A}ntwerp, 1992), vol.~8, 1994,
  pp.~287--339. {\sf\scriptsize MR1291023 (95k:14008)}

\bibitem[HK04]{Hassett_Kovacs_Reflexive_pull_backs}
{\sc B.~Hassett and S.~J. Kov{\'a}cs}: \emph{Reflexive pull-backs and base
  extension}, J. Algebraic Geom. \textbf{13} (2004), no.~2, 233--247.
  {\sf\scriptsize MR2047697 (2005b:14028)}

\bibitem[Kaw81]{Kawamata_Characterization_of_abelian_varieties}
{\sc Y.~Kawamata}: \emph{Characterization of abelian varieties}, Compositio
  Math. \textbf{43} (1981), no.~2, 253--276. {\sf\scriptsize 622451
  (83j:14029)}

\bibitem[Kee03]{Keller_Ample_filters_of_invertible_sheaves}
{\sc D.~S. Keeler}: \emph{Ample filters of invertible sheaves}, J. Algebra
  \textbf{259} (2003), no.~1, 243--283. {\sf\scriptsize 1953719 (2003m:14026)}

\bibitem[Kol87]{Kollar_Subadditivity_of_the_Kodaira_dimension}
{\sc J.~Koll{\'a}r}: \emph{Subadditivity of the {K}odaira dimension: fibers of
  general type}, Algebraic geometry, {S}endai, 1985, Adv. Stud. Pure Math.,
  vol.~10, North-Holland, Amsterdam, 1987, pp.~361--398. {\sf\scriptsize
  MR946244 (89i:14029)}

\bibitem[Kol90]{Kollar_Projectivity_of_complete_moduli}
{\sc J.~Koll{\'a}r}: \emph{Projectivity of complete moduli}, J. Differential
  Geom. \textbf{32} (1990), no.~1, 235--268. {\sf\scriptsize 1064874
  (92e:14008)}

\bibitem[Kol10]{Kollar_Moduli_of_varieties_of_general_type}
{\sc J.~Koll{\'a}r}: \emph{Moduli of varieties of general type},
  arXiv:1008.0621 (2010).

\bibitem[KM98]{Kollar_Mori_Birational_geometry_of_algebraic_varieties}
{\sc J.~Koll{\'a}r and S.~Mori}: \emph{Birational geometry of algebraic
  varieties}, Cambridge Tracts in Mathematics, vol. 134, Cambridge University
  Press, Cambridge, 1998, With the collaboration of C. H. Clemens and A. Corti,
  Translated from the 1998 Japanese original. {\sf\scriptsize MR1658959
  (2000b:14018)}

\bibitem[Laz04]{Lazarsfeld_Positivity_in_algebraic_geometry_I}
{\sc R.~Lazarsfeld}: \emph{Positivity in algebraic geometry. {I}}, Ergebnisse
  der Mathematik und ihrer Grenzgebiete. 3. Folge. A Series of Modern Surveys
  in Mathematics [Results in Mathematics and Related Areas. 3rd Series. A
  Series of Modern Surveys in Mathematics], vol.~48, Springer-Verlag, Berlin,
  2004, Classical setting: line bundles and linear series. {\sf\scriptsize
  MR2095471 (2005k:14001a)}

\bibitem[Mau12]{Maulik_Supersingular_K3_surfaces_for_large_primes}
{\sc D.~Maulik}: \emph{Supersingular {K3} surfaces for large primes},
  arXiv:1203.2889 (2012).

\bibitem[MS12]{Miller_Schwede_Semi_log_canonical}
{\sc L.~E. Miller and K.~Schwede}: \emph{Semi-log canonical vs {$F$}-pure
  singularities}, J. Algebra \textbf{349} (2012), 150--164. {\sf\scriptsize
  2853631}

\bibitem[MB81]{Moret_Bailly_Familles_de_courbes_et_de_varietes_abeliennes_sur_%
P_1_II_exemples}
{\sc L.~Moret-Bailly}: \emph{Familles de courbes et de vari\'et\'es
  ab\'eliennes sur {$\mathbb{P}\sp 1$} ({II}. {E}xemples)}, Ast\'erisque
  (1981), no.~86, 125--140.

\bibitem[Mum77]{Mumford_Hirzebruch_s_proportionality_theorem}
{\sc D.~Mumford}: \emph{Hirzebruch's proportionality theorem in the noncompact
  case}, Invent. Math. \textbf{42} (1977), 239--272. {\sf\scriptsize 471627
  (81a:32026)}

\bibitem[Mus12]{Mustata_Ordinary_varieties_and}
{\sc M.~Musta{\c{t}}{\u{a}}}: \emph{Ordinary varieties and the comparison
  between multiplier ideals and test ideals {II}}, Proc. Amer. Math. Soc.
  \textbf{140} (2012), no.~3, 805--810. {\sf\scriptsize 2869065}

\bibitem[MS11]{Mustata_Srinivas_Ordinary_varieties_and}
{\sc M.~Musta{\c{t}}{\u{a}} and V.~Srinivas}: \emph{Ordinary varieties and the
  comparison between multiplier ideals and test ideals}, Nagoya Math. J.
  \textbf{204} (2011), 125--157. {\sf\scriptsize 2863367}

\bibitem[Par68]{Parshin_Algebraic_curves_over_function_fields}
{\sc A.~N. Par{\v{s}}in}: \emph{Algebraic curves over function fields}, Dokl.
  Akad. Nauk SSSR \textbf{183} (1968), 524--526. {\sf\scriptsize MR0236180 (38
  \#4478)}

\bibitem[Pat11]{Patakfalvi_Components_of_the_moduli_space_of_stable_schemes}
{\sc {\relax Zs}.~Patakfalvi}: \emph{Components of the moduli space of stable
  schemes}, manuscript (2011).

\bibitem[PS12]{Patakfalvi_Schwede_Depth_of_F_singularities}
{\sc {\relax Zs}.~Patakfalvi and K.~Schwede}: \emph{Depth of
  {$F$}-singularities and base change of relative canonical sheaves},
  http://arxiv.org/abs/1207.1910 (2012).

\bibitem[Ray78]{Raynaud_Contre_exemple_au_vanishing_theorem}
{\sc M.~Raynaud}: \emph{Contre-exemple au ``vanishing theorem'' en
  caract\'eristique {$p>0$}}, C. {P}. {R}amanujam---a tribute, Tata Inst. Fund.
  Res. Studies in Math., vol.~8, Springer, Berlin, 1978, pp.~273--278.
  {\sf\scriptsize 541027 (81b:14011)}

\bibitem[Sch09]{Schwede_F_adjunction}
{\sc K.~Schwede}: \emph{{$F$}-adjunction}, Algebra Number Theory \textbf{3}
  (2009), no.~8, 907--950. {\sf\scriptsize 2587408 (2011b:14006)}

\bibitem[Sch11]{Schwede_A_canonical_linear_system}
{\sc K.~Schwede}: \emph{A canonical linear system associated to adjoint
  divisors in characteristic $p > 0$}, arXiv:1107.3833 (2011).

\bibitem[ST11]{Schwede_Tucker_A_survey_of_test_ideals}
{\sc K.~Schwede and K.~Tucker}: \emph{A survey of test ideals}, arXiv:1104.2000
  (2011).

\bibitem[Szp79]{Szpiro_Sur_le_theoreme_de_rigidite}
{\sc L.~Szpiro}: \emph{Sur le th\'eor\`eme de rigidit\'e de {P}arsin et
  {A}rakelov}, Journ\'ees de {G}\'eom\'etrie {A}lg\'ebrique de {R}ennes
  ({R}ennes, 1978), {V}ol. {II}, Ast\'erisque, vol.~64, Soc. Math. France,
  Paris, 1979, pp.~169--202. {\sf\scriptsize 563470 (81f:14004)}

\bibitem[Tak08]{Takagi_A_characteristic_p_analogue_of_plt_sintularities}
{\sc S.~Takagi}: \emph{A characteristic {$p$} analogue of plt singularities and
  adjoint ideals}, Math. Z. \textbf{259} (2008), no.~2, 321--341.
  {\sf\scriptsize 2390084 (2009b:13004)}

\bibitem[Vie83]{Viehweg_Weak_positivity}
{\sc E.~Viehweg}: \emph{Weak positivity and the additivity of the {K}odaira
  dimension for certain fibre spaces}, Algebraic varieties and analytic
  varieties ({T}okyo, 1981), Adv. Stud. Pure Math., vol.~1, North-Holland,
  Amsterdam, 1983, pp.~329--353. {\sf\scriptsize 715656 (85b:14041)}

\bibitem[Vie95]{Viehweg_Quasi_projective_moduli}
{\sc E.~Viehweg}: \emph{Quasi-projective moduli for polarized manifolds},
  Ergebnisse der Mathematik und ihrer Grenzgebiete (3) [Results in Mathematics
  and Related Areas (3)], vol.~30, Springer-Verlag, Berlin, 1995.
  {\sf\scriptsize MR1368632 (97j:14001)}

\end{thebibliography}
 
\end{document}